\newtheorem{theorem}{Theorem}[section]
\newtheorem{lemma}[theorem]{Lemma}
\newtheorem{prob}[theorem]{Problem}
\newtheorem{conjecture}[theorem]{Conjecture}
\newtheorem{corollary}[theorem]{Corollary}
\newtheorem{claim}[theorem]{Claim}
\def\endpf{\hfill\rule{2mm}{2mm}\\\bigskip}
  \theoremstyle{definition}
  \newtheorem{definition}[theorem]{Definition}
  \theoremstyle{remark}
  \newtheorem{observation}[theorem]{Observation}
  \newtheorem{example}[theorem]{Example}
  \theoremstyle{plain}
\newcommand{\cf}{{\mathcal F}}
\newcommand{\cm}{{\mathcal M}}
\newcommand{\cn}{{\mathcal N}}
\newcommand{\cc}{{\mathcal C}}
\newcommand{\ci}{{\mathcal I}}
\title{On a generalization of the Ryser-Brualdi-Stein conjecture}
\begin{document}

\makeatother

\author{Ron Aharoni}\thanks{The research of Ron Aharoni was supported by ISF grant number $2017006$ , by BSF grant number $2006099$,  by ARC grant number DP120100197  and by the Discount Band chair.}

\address{Department of Mathematics\\
 Technion, Haifa\\
 Israel 32000}

\email{Ron Aharoni: ra@tx.technion.ac.il}

\author{Pierre Charbit}

\address{ LIAFA, Universit\'e Paris Diderot}

\email{Pierre Charbit: charbit@liafa.jussieu.fr}

\author{David Howard}

\address{Department of Mathematics, Colgate University, Hamilton, NY 13346}

\email{David Howard: dmhoward@colgate.edu}

\maketitle

\begin{abstract}
A {\em rainbow matching} for (not necessarily distinct) sets $F_1,\ldots ,F_k$ of hypergraph edges
 is a matching  consisting of $k$ edges, one from each
$F_i$.  The aim of the paper is twofold - to
put order in the multitude of conjectures that relate to this concept (some of them first presented here), and to present some partial results on one of these conjectures, that seems central among them.
\end{abstract}

\section{Introduction }
A choice function for a family of sets  $\cf=(F_1, \ldots ,F_m)$ is a choice of elements $f_1 \in F_1, \ldots, f_m \in F_m$.  It is also called a {\em system of representatives} (SR) for $\cf$. Many combinatorial questions can be formulated in terms of SRs that satisfy yet another condition. For example, in Hall's theorem \cite{hall} the extra condition is injectivity. In Rado's theorem \cite{rado} the condition is injectivity, plus the demand that the range of the choice function belongs to a given matroid. In the most general setting, a simplicial complex (closed down hypergraph) $\cc$ is given on $\bigcup F_i$, and the condition is that the range of the function belongs to $\cc$. The SR is then called a $\cc$-{\em SR}. (Even injectivity can be formulated in the terminology of $\cc$-SRs, using a trick of making many copies of each element.)

Given a graph $G$, we denote by $\ci(G)$ the set of independent sets in $G$.  An
$\ci(G)$-SR is also called an {\em ISR} (independent system of representatives).

We shall focus our attention on one special case, in which the sets $F_i$ are hypergraphs, and the extra condition is that the edges chosen are disjoint. In this case, the system of representatives is called a {\em rainbow matching} for $\cf$.  Thus, a rainbow matching for sets of edges $F_1,\ldots ,F_m$
is an ISR in the line graph of $\bigcup F_i$.

A  {\em partial $\cc$-SR}  is a partial choice function satisfying the above condition.

The conjecture that motivates our inquiry is the following conjecture of Aharoni and Berger (not published before):

\begin{conjecture}\label{fullhalf}\label{conj:main}
$k$ matchings of size $k+1$ in a bipartite graph possess a (full) rainbow matching.
\end{conjecture}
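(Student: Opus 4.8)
The plan is to argue by contradiction with the Haxell-type augmenting machinery, using bipartiteness to beat the trivial greedy bound. Suppose the statement fails and fix a counterexample $F_1,\dots ,F_k$, i.e.\ matchings of size $k+1$ in a bipartite graph $G=(X\cup Y,E)$ admitting no rainbow matching of size $k$; among all such, take one with as few edges as possible. Let $R=\{g_1,\dots ,g_r\}$ be a maximum partial rainbow matching, say $g_i\in F_i$ after relabelling, so $r\le k-1$. The goal is to derive an ``augmenting'' configuration that either enlarges $R$ or contradicts minimality.

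The first step disposes of the greedy range. Since $R$ is a matching it has exactly $2r$ vertices, and since $F_{r+1}$ is a matching, distinct edges of $F_{r+1}$ meet distinct vertices of $R$, so at most $2r$ edges of $F_{r+1}$ are blocked by $R$. As $|F_{r+1}|=k+1$, if $2r<k+1$ some edge of $F_{r+1}$ avoids $V(R)$ and enlarges $R$, a contradiction. Hence $r\ge\lceil (k+1)/2\rceil$, and the number $k-r$ of ``unused'' classes is at most $\lfloor (k-1)/2\rfloor$. This already forces $k\ge 3$, and it is in this range that the real difficulty lies.

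The core step is to grow a Haxell-type alternating tree rooted at the unused classes $F_{r+1},\dots ,F_k$. Every edge $h$ of an unused class is pinned to a vertex of some $g_i\in R$; one tries to replace $g_i$ by a different $g_i'\in F_i$ disjoint from $(R\setminus g_i)\cup\{h\}$, which would produce a rainbow matching of size $r+1$. If no such swap exists, the blocked alternatives feed new vertices of the used edges into the tree, and one iterates. The decisive gain over the classical $2k-1$ bound should come from bipartiteness: an edge has one endpoint in $X$ and one in $Y$, and by running the tree so that at each level only one side's vertices are charged, the ``two blocked edges per chosen edge'' cost of the greedy argument is morally halved, pushing the required size from $2k-1$ down toward $k$. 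Making this quantitatively sharp is the main obstacle: a literal Haxell tree still only certifies size of order $3k/2$, and closing the gap to exactly $k+1$ appears to need a genuinely global constraint on the tree size — e.g.\ a potential or weighting on the edges of $R$ that strictly decreases along the tree, or a rank-type bound on how many swap options distinct unused edges can simultaneously destroy. I expect this to be the true crux; it is precisely where the conjecture is currently open, and any complete proof must be tight, since for even $k$ a proper $k$-edge-colouring of $K_{k,k}$ with no transversal shows that $k$ matchings of size merely $k$ need not have a full rainbow matching.

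As a fallback one can pass to the topological formulation: a rainbow matching of size $k$ exists once, for every $I\subseteq [k]$, the homotopical connectivity of the independence complex of the line graph of $\bigcup_{i\in I}F_i$ is at least $|I|-2$, by the standard ISR criterion. The difficulty is the same in disguise — the known connectivity estimates for independence complexes of line graphs of bipartite graphs lose a constant factor relative to what is needed — so a successful attack along these lines would have to prove a connectivity bound adapted to graphs that are unions of at most $k$ matchings of size $k+1$, rather than to arbitrary bipartite graphs of the same matching number.
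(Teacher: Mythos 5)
This statement is an open conjecture in the paper (Conjecture~\ref{conj:main}), not a theorem, so there is no proof in the paper to compare against. You correctly recognize this and, to your credit, do not claim to close it: your write-up is an analysis of an approach and of where it breaks down, which is honest but is not a proof of the statement.

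The closest the paper comes is Theorem~\ref{sevenfourths}, $f(2,k)\le \tfrac{7}{4}k$, proved in Section~\ref{sec:proof1}. That argument is in the same broad family as what you sketch — grow an alternating structure off a maximum rainbow matching $M$ and exploit bipartiteness — but it is not a generic Haxell tree. Instead it builds a very specific chain of edge-pairs $f_j',f_j''$, each pair meeting the next representative $f_{j+1}$, stops at the first index $p$ where the construction cannot continue, and then partitions $V(G)$ into four blocks $P_1,P_2,P_3,S$ and counts how many edges of $F_p$ can lie in or between these blocks. The punchline is a weighted sum of the resulting inequalities, giving $3n\le 6k-t$ with $t>n-k$, hence $n<\tfrac{7}{4}k$. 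So the ``global constraint on the tree size'' you correctly identify as the missing ingredient is, in the paper, replaced by a bespoke combinatorial bookkeeping of where $F_p$'s edges can go — and even that only reaches $\tfrac{7}{4}k$, not $k+1$.

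Two smaller points. First, your claim that ``a literal Haxell tree still only certifies size of order $3k/2$'' is not substantiated in the paper and should not be stated as known; the paper's own argument lands at $\tfrac{7}{4}k$, and if you can actually push a Haxell-type argument to $\tfrac{3}{2}k$ that would itself be an improvement worth writing up carefully. Second, for the tightness remark: you invoke edge-colourings of $K_{k,k}$ (equivalently Latin squares without transversals) which exist for even $k$, but the paper's Example~\ref{standard} — take $F_1=\dots=F_{k-1}$ equal to a fixed perfect matching on $k$ edges and $F_k$ a cyclic shift of it — shows for \emph{every} $k>1$ that size $k$ does not suffice, so the threshold $k+1$ is tight for all $k>1$, not only even $k$. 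Your topological fallback paragraph is also a fair description of the state of the art, but again it identifies the obstacle rather than overcoming it; as it stands, neither route in your proposal yields a proof, which is consistent with the problem being open.
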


This conjecture strengthens a conjecture of Brualdi and Stein, on Latin squares.
A  Latin square of order $n$ is an $n \times n$ matrix whose entries are the symbols $1,\ldots ,n$,
each  appearing once in every row and once in every column. A (partial) {\em transversal}
in a Latin square is a set of entries, each in a distinct row and a distinct column, containing distinct symbols. A transversal of size $n$ is said to be {\em full}. A well known conjecture of Ryser \cite{ryser} is that for $n$ odd every $n \times n$ Latin square contains a full transversal. For even $n$ this is false, and Brualdi
and Stein \cite{brualdi,stein} raised  independently the following natural conjecture:

\begin{conjecture}\label{conj:bs}
In a Latin square of order $n$ there exists a partial transversal of size $n-1$.
\end{conjecture}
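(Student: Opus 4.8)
The statement is the Brualdi--Stein conjecture, one of the central open problems about Latin squares, so what follows is a plan of attack rather than a complete argument: it describes the route that has produced the strongest partial results and indicates where it stalls.

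First I would pass to the rainbow-matching formulation of the introduction. A Latin square $L$ of order $n$ partitions the edge set of $K_{n,n}$ --- with one side indexing rows and the other columns --- into perfect matchings $M_1,\dots,M_n$, where $M_s$ is the set of cells carrying the symbol $s$; a partial transversal of size $k$ is exactly a rainbow matching of size $k$ for $k$ of the $M_s$. Since each $M_s$ has size exactly $n$, Conjecture~\ref{conj:main} applied to any $n-1$ of these matchings --- each of size $n=(n-1)+1$ --- would yield a rainbow matching of size $n-1$, that is, a partial transversal of size $n-1$; thus Conjecture~\ref{conj:main} implies Conjecture~\ref{conj:bs}. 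One option is therefore to attack the stronger Conjecture~\ref{conj:main} head-on, but since that appears genuinely harder I would instead use the rigid structure of Latin squares directly.

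The main line I would pursue is the random-greedy-plus-absorption strategy. \emph{Step 1:} set aside a small random family $A$ of cells forming an absorber, so that for every sufficiently small triple of pairwise-equinumerous sets $R$ of rows, $C$ of columns and $S$ of symbols, $A$ contains a partial transversal using exactly the rows in $R$, the columns in $C$ and the symbols in $S$; this can be arranged because the auxiliary hypergraph of local ``switching'' configurations in a Latin square is dense and pseudorandom enough to supply many such completions. \emph{Step 2:} run a random greedy process (or a nibble) on the cells outside $A$ to build a partial transversal missing only $o(n)$ rows, columns and symbols, while keeping the still-uncovered rows, columns and symbols in general position with respect to $A$. \emph{Step 3:} absorb the leftover, completing the transversal to size $n-1$; the single cell one must forgo is genuinely forced --- for even $n$ a full transversal need not exist --- so the argument is obliged to, and does, give up exactly one element.

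The hard part will be the interaction of Steps~1 and~2: one must design absorbers robust enough to digest an arbitrary small leftover, and then prove that the random greedy phase actually terminates with a leftover small and well-distributed enough to be absorbed. This requires tight concentration for the number of legal extensions throughout the process and careful control of the correlations introduced by the Latin-square constraints (each row, column and symbol usable at most once). Driving the deficiency down from $O(\sqrt n)$ (the Woolbright and Brouwer--de Vries--Wieringa bound), through $O(\log^2 n)$ (Hatami--Shor), to a constant and finally to $1$ is precisely where all the difficulty lies, and is what any complete proof of Conjecture~\ref{conj:bs} must achieve.
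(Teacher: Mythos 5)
This statement is a conjecture in the paper (the Brualdi--Stein conjecture), not a theorem; the paper offers no proof of it, and none is given here to compare against. You correctly recognize this and, rather than pretending otherwise, give (a) a valid reduction showing that Conjecture~\ref{conj:main} implies Conjecture~\ref{conj:bs}, and (b) a high-level sketch of the semirandom/absorption line of attack.

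On point (a): your reduction is sound and marginally cleaner than the one the paper actually records. The paper passes through Conjecture~\ref{emptyhalf} ($k$ matchings of size $k$ have a rainbow matching of size $k-1$), which is derived from Conjecture~\ref{conj:main} by padding each matching with a common extra edge. You instead apply Conjecture~\ref{conj:main} directly with $k=n-1$ to $n-1$ of the $n$ symbol-matchings, each of size $n=(n-1)+1$; since a partial transversal of size $n-1$ uses only $n-1$ symbols anyway, dropping one symbol-matching costs nothing, and no padding step is needed. Both routes are correct; yours skips a lemma.

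On point (b): this is an outline, not an argument. Steps 1--3 name the standard ingredients (absorber, nibble, absorption) but none of the claims in them -- that the switching hypergraph is ``dense and pseudorandom enough,'' that the greedy phase leaves a leftover the absorber can digest, that exactly one cell must be sacrificed -- is established or even reduced to a checkable statement. You say so yourself (``this is precisely where all the difficulty lies''), and that honesty is appropriate: as of this paper the conjecture is open, and the paper's own contribution toward it is the weaker Theorem~\ref{sevenfourths} and Theorem~\ref{tripartitehalf}, proved by alternating-path and counting arguments quite different in flavor from what you describe. So there is no error to flag, but there is also no proof here, and none was expected.
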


A Latin square can be viewed as a $3$-partite hypergraph, with sides
($R$=set of rows, $C$=set of columns, $S$=set of symbols), in which
every entry $e$ corresponds to the edge $(row(e), column(e),
symbol(e))$. The $3$-partite hypergraph corresponding to a Latin square satisfies stringent conditions - every  pair of vertices in two different sides belongs to precisely one $3$-edge. In particular, this means that for every vertex $v$, the set of $2$-edges complementing $v$ to a $3$-edge is a matching in a bipartite graph. Hence the Brualdi-Stein conjecture would follow from:

\begin{conjecture}\label{emptyhalf}
$k$ matchings of size $k$ in a bipartite graph possess a partial rainbow matching of size $k-1$.

\end{conjecture}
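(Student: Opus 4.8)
The plan is to attack Conjecture~\ref{emptyhalf} by the topological method for independent systems of representatives. As observed above, a partial rainbow matching of size $k-1$ for $\cf=(F_1,\dots,F_k)$ is precisely a partial $\ci(L)$-SR of size $k-1$ for $\cf$, where $L$ is the line graph of $\bigcup_i F_i$; moreover the induced subcomplex of $\ci(L)$ on $\bigcup_{i\in I}F_i$ is exactly the matching complex $\cm\bigl(\bigcup_{i\in I}F_i\bigr)$. One can therefore invoke the deficiency version of the Aharoni--Berger--Ziv topological criterion: it suffices to prove that for every $I\subseteq[k]$,
\[
\eta\Bigl(\cm\bigl(\textstyle\bigcup_{i\in I}F_i\bigr)\Bigr)\ \ge\ |I|-1 .
\]
Since each $F_i$ is a matching of size $k\ge|I|$, this reduces the whole conjecture to one self-contained topological assertion:

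\begin{quote}
If a bipartite graph $G$ is the union of $t$ matchings, each of size at least $t$, then $\eta(\cm(G))\ge t-1$.
\end{quote}

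I would try to establish this by induction on $t$, using the standard link--deletion recursion: for any edge $e=uv$ of $G$,
\[
\eta(\cm(G))\ \ge\ \min\bigl(\eta(\cm(G-e)),\ \eta(\cm(G-\{u,v\}))+1\bigr),
\]
where $\cm(G-\{u,v\})$ is the link of $e$, $\cm(G-e)$ its deletion, and the inequality may be applied with $e$ chosen as we please. The link branch is the favourable one: if $e$ can be picked so that $G-\{u,v\}$ still contains a subgraph that is the union of $t-1$ of the matchings, each restricted to size at least $t-1$, then the induction hypothesis gives $\eta(\cm(G-\{u,v\}))\ge t-2$, hence $\eta(\cm(G))\ge t-1$. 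When no such edge exists one resorts to the deletion branch, and there one argues --- after strengthening the induction hypothesis to keep track of how many ``short'' matchings (of size $t-1$, $t-2$, \dots) are present --- that deleting the single edge $e$ shrinks things mildly enough to carry the stronger statement through.

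The main obstacle is exactly this inductive bookkeeping. Removing the two endpoints of an edge can destroy up to two edges in each of the other $t-1$ matchings, pushing their sizes down to $t-2$ and breaking the hypothesis; forcing the existence of an edge $e$ lying on two low-degree vertices, or passing first to a cleverly chosen spanning subgraph of $G$, is delicate, and in fact obtaining the bound all the way up to $t-1$ --- rather than the $t/2$ that the easy estimate $\eta(\cm(G))\ge \nu(G)/2$ for bipartite $G$ already yields --- seems essentially as hard as the conjecture itself.

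As a realistic intermediate target I would first aim for the quantitative weakenings --- a partial rainbow matching of size $k-O(\sqrt k)$, then $k-O(\log^{2}k)$ --- by transplanting the augmenting-path and counting arguments of Woolbright and of Hatami--Shor for partial transversals of Latin squares into the line-graph setting, since those arguments use only that the colour classes are matchings of size $k$ and not the full rigidity of a Latin square. Pushing such an argument, or the topological induction above, from $k-o(k)$ down to $k-1$ is the crux, and I do not expect a short proof.
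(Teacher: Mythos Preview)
The statement you are addressing is an \emph{open conjecture} in the paper; the authors do not prove it, nor do they claim to. There is therefore no ``paper's own proof'' to compare against. What the paper does establish are the partial results $f(2,k)\le \tfrac{7}{4}k$ (Theorem~\ref{sevenfourths}) and $g(3,k)\ge \tfrac{k}{2}$ (Theorem~\ref{tripartitehalf}), both by direct combinatorial arguments --- augmenting-path constructions and edge-counting --- with no topological machinery at all.

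Your proposal is candid about its status: it is a plan of attack, not a proof, and you yourself identify the crux. The reduction to the connectivity estimate
\[
\eta\bigl(\cm(G)\bigr)\ \ge\ t-1\quad\text{whenever $G$ is a union of $t$ matchings of size $\ge t$}
\]
is correct in principle, but as you note, the link--deletion recursion loses too much: deleting the endpoints of an edge can strip two edges from each remaining matching, so the induction hypothesis is not self-sustaining without a substantial strengthening, and no such strengthening is supplied. Your observation that the ``easy'' bound $\eta(\cm(G))\ge\nu(G)/2$ already recovers only a rainbow matching of size roughly $k/2$ is accurate, and getting from there to $k-1$ is, as you say, essentially the full conjecture. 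The intermediate targets $k-O(\sqrt{k})$ and $k-O(\log^2 k)$ via Woolbright/Hatami--Shor style arguments are reasonable research directions --- indeed the paper cites Woolbright's $g(2,k)\ge k-\sqrt{k}$ --- but none of this constitutes a proof of Conjecture~\ref{emptyhalf}, and you should not present it as one.
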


Conjecture \ref{emptyhalf} follows from Conjecture \ref{fullhalf} by the familiar device of
 expanding the  given matchings of size $k$ to
 matchings of size $k+1$,  adding the same edge to all.

 Note that $k$ matching of size $k$ need not have a full rainbow matching. This  is shown by a  standard example:-

\begin{example}\label{standard}

$$F_1=\ldots=F_{k-1}=\{(a_1,b_1),(a_2,b_2),\ldots ,(a_k,b_k)\},~~F_k=\{(a_1,b_2),(a_2,b_3),\ldots ,(a_k,b_1)\}$$\end{example}

Ian Wanless (private communication) constructed the following example for $k\ge 4$ even, in which all matchings $F_i$ are of size $k$, except for one which is of size $k+1$, and yet there is no rainbow matching:

\begin{example}[Wanless]
Write $k=2m$. Let $F_1, F_2,\ldots ,F_m$ all be equal to the matching $\{(a_i,b_i) \mid i \le k\}$, let $F_{m+1}, F_{m+1},\ldots ,F_{2m}$ all be the matching $\{(a_i,b_{i+1}) \mid i < k\}$ (where indices are taken $\bmod (k-1)$), with the edge $(a_k,b_k)$ added to all $F_i,~i <2m=k$ and the two edges  $(a_k,b_{k+1}),(a_{k+1},b_k)$ added to $F_k$.

\end{example}

  We do not know such an example for odd $k$, or an example of $k$ matchings, among which two  are of size $k+1$ and the rest of size $k$,  not possessing a rainbow matching.

Later on, many ramifications of Conjecture \ref{fullhalf} will be mentioned. But we shall start with a natural endeavor -
trying to prove as small lower bounds as possible on the size of the matchings, that guarantee the existence of a rainbow matching.

\begin{definition}
Let $f(r,k)$ be the  least number $t$ such that every $k$ matchings
of size $t$ in an $r$-partite graph have a rainbow matching. Also
let $g(r,k)$ be the largest number $s$ such that every $k$ matchings
of size $k$ in an $r$-partite bipartite graph possess a partial
rainbow matching of size $s$.
\end{definition}

In this terminology, Conjecture \ref{emptyhalf} is that $g(2,k)\ge k-1$, and Conjecture \ref{fullhalf} is that $f(2,k)\le k+1$.
Example \ref{standard} shows that:
\begin{observation}
For $k>1$ we have $g(2,k)\leq k-1$ and $f(2,k)\geq k+1$.
\end{observation}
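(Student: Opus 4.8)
The plan is to extract both inequalities from Example~\ref{standard}. That example exhibits, for every $k>1$, a family of $k$ matchings $F_1,\dots,F_k$, each of size $k$, in a bipartite graph; it suffices to check that this family admits no rainbow matching of size $k$. Granting that, the definition of $g(2,k)$ gives $g(2,k)\le k-1$ at once, and the definition of $f(2,k)$ gives $f(2,k)\ge k+1$ (a bipartite graph is in particular a $2$-partite graph, so $k$ matchings of size $k$ with no rainbow matching witness $f(2,k)>k$).

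So the one thing to verify is that the family of Example~\ref{standard} has no full rainbow matching. I would set $A=\{a_1,\dots,a_k\}$ and $B=\{b_1,\dots,b_k\}$ for the two sides, identify the edge $(a_i,b_j)$ with the pair $(i,j)$, and argue by contradiction: suppose $f_1,\dots,f_k$ is a rainbow matching with $f_i\in F_i$. For $i\le k-1$ the edge $f_i$ lies on the diagonal, $f_i=(j_i,j_i)$, and disjointness forces $j_1,\dots,j_{k-1}$ to be pairwise distinct, so $\{j_1,\dots,j_{k-1}\}=\{1,\dots,k\}\setminus\{m\}$ for a unique index $m$. Thus $f_1,\dots,f_{k-1}$ already occupy every $a_j$ and every $b_j$ with $j\ne m$.

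Now look at $f_k$. Every edge of $F_k$ has the form $(i,i+1)$ with indices read modulo $k$, so $f_k$ is incident to some $a_j$; since $a_j$ must be free, necessarily $j=m$, i.e.\ $f_k=(m,m+1)$. But then $f_k$ uses $b_{m+1}$, and because $k>1$ we have $m+1\not\equiv m\pmod k$, so $b_{m+1}$ is one of the already-occupied $B$-vertices, contradicting disjointness. Hence no full rainbow matching exists, which is all that is needed. The argument is entirely elementary and I do not anticipate a genuine obstacle; the only point requiring care is the modular bookkeeping in $F_k$, together with noting that the hypothesis $k>1$ is exactly what guarantees $m+1\ne m$ and hence the conflict.
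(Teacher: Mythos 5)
Your argument is correct and is exactly what the paper intends: the paper simply states that Example~\ref{standard} establishes the observation, leaving the verification implicit, and you have spelled out that verification (no rainbow matching exists, hence $g(2,k)\le k-1$ and $f(2,k)\ge k+1$ directly from the definitions). The modular bookkeeping and the role of $k>1$ are handled correctly.
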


Greedy arguments yield $f(2,k) \le 2k-1$ and $g(2,k) \ge \frac{k}{2}$.
Woolbright \cite{woolbright}  proved (though in a somewhat different context):

\begin{theorem}
$g(2,k) \ge k-\sqrt{k}$.
\end{theorem}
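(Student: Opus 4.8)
The plan is to exploit maximality of the matching. Let $M$ be a \emph{maximum} partial rainbow matching for $F_1,\ldots,F_k$, reindexed so that $M$ uses exactly one edge $e_i=(a_i,b_i)$ of each color $i\le t:=|M|$, and put $d:=k-t$ for the number of unused colors. We must show $d\le\sqrt k$, equivalently $t\ge k-\sqrt k$. Write $A_M=\{a_1,\ldots,a_t\}$ and $B_M=\{b_1,\ldots,b_t\}$ for the $t$ vertices $M$ covers on each side, and call the other vertices \emph{exposed}. The starting point is the greedy observation: by maximality no edge of an unused $F_j$ is disjoint from $M$, so each of the $k$ edges of $F_j$ meets one of the $2t$ vertices of $A_M\cup B_M$; since $F_j$ is a matching this already gives $k\le 2t$, i.e.\ $d\le k/2$. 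Refining the count, if $a_j$ (resp.\ $b_j$) is the number of edges of $F_j$ meeting $A_M$ (resp.\ $B_M$), then $a_j\le t$, $b_j\le t$ and $a_j+b_j\ge k$, so $F_j$ has at least $k-b_j\ge d$ edges whose $B$-endpoint is exposed (each such edge then has its $A$-endpoint in $A_M$), and symmetrically at least $d$ whose $A$-endpoint is exposed.

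The engine of the improvement is an \emph{exchange move}. If $N$ is any maximum partial rainbow matching, $j$ a color unrepresented in $N$, and $f\in F_j$ an edge meeting $N$ in exactly one vertex $v$, lying on the edge $e\in N$ of color $i$, then $N':=N-e+f$ is again a maximum partial rainbow matching: color $i$ is now unrepresented, color $j$ is used, the former $N$-partner of $v$ has become exposed, and the other endpoint of $f$ has become covered. Crucially, no maximum partial rainbow matching obtained from $M$ by a sequence of exchange moves can contain an edge of an unused color with both endpoints exposed, since adding such an edge would enlarge the matching, contradicting maximality of $M$.

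Now run an alternating breadth-first search, rooted at the exposed $A$-vertices, that alternately traverses an edge of a currently-unused color (from an $A$-vertex to a $B$-vertex) and an edge of the current matching (from a covered $B$-vertex $b_i$ back to $a_i$), performing the corresponding exchange at each step and keeping the colors used along a branch pairwise distinct. Because augmentation is impossible, this search never reaches an exposed $B$-vertex, and so it terminates in a ``closed'' configuration: a set of indices $I\subseteq[t]$, giving a covered vertex set $\{a_i,b_i:i\in I\}$, a set of colors consumed along the tree, and a residual family of unused colors, such that every edge of every residual unused color incident to the $A$-side of the closed set has its $B$-endpoint inside $\{b_i:i\in I\}$. (This is the bipartite reformulation of Woolbright's analysis of partial transversals in Latin squares, transported along the dictionary of the Introduction; one could instead run the entire argument on Latin squares and invoke that correspondence, cf.\ \cite{woolbright}.)

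It remains to count. Each residual unused color is still a matching of size $k$; each of its edges touching the $A$-side of the closed set must send its $B$-endpoint into the $|I|$ vertices $\{b_i:i\in I\}$; and, a matching meeting each vertex at most once, these incidences pack into the closed vertex set only with bounded multiplicity. Combining this with the earlier lower bound (at least $d$ suitable edges per unused color) and the fact that the exposed side is large ($|A|\ge k$ because $F_1$ alone contributes $k$ distinct $A$-vertices), a supply-versus-capacity comparison yields a genuinely quadratic inequality, of the shape $d^2\le k$ after optimising the size of $I$; hence $d\le\sqrt k$ and $t=k-d\ge k-\sqrt k$. The main obstacle is exactly this last step: organising the exchange tree and its bookkeeping so that the resulting bound is quadratic in $d$ rather than merely linear (a linear bound would only reproduce the greedy estimate $d\le k/2$). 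This delicate double count is the heart of Woolbright's argument.
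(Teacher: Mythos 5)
The paper does not prove this statement at all---it is quoted as Woolbright's theorem with a citation to \cite{woolbright} and no argument, so there is nothing in the text to compare your attempt against line by line. What can be said is whether your sketch would stand on its own, and it does not: it is an outline with the crucial step left open, and you say so yourself.

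The framework you set up is sound. The exchange move is correctly verified (replacing $e\in N$ of a used colour $i$ by an edge $f$ of an unused colour $j$ that meets $N$ only on $e$ yields another maximum partial rainbow matching), and the preliminary count ($a_j+b_j\ge k$, hence at least $d=k-t$ edges of each unused $F_j$ pair an exposed $B$-vertex with a covered $A$-vertex) is right. But the passage from the alternating BFS to the inequality $d^2\le k$ is asserted, not proved. You write that ``a supply-versus-capacity comparison yields a genuinely quadratic inequality \ldots\ after optimising the size of $I$,'' and then immediately concede that ``the main obstacle is exactly this last step: organising the exchange tree and its bookkeeping so that the resulting bound is quadratic in $d$ rather than merely linear.'' That bookkeeping is the entire content of Woolbright's proof; without it the argument only reproduces the greedy bound $d\le k/2$. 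Concretely, one needs to exhibit nested sets of matching indices $I_1\subsetneq I_2\subsetneq\cdots$ reachable by exchanges from exposed vertices, prove that each $I_{s+1}$ strictly exceeds $I_s$ by at least $d$ (because each unused colour supplies $\ge d$ fresh edges whose $B$-endpoint must land in $\{b_i:i\in I_{s+1}\}\setminus\{b_i:i\in I_s\}$ while keeping colours along a branch distinct), and prove that the process can be sustained for at least $d$ rounds before terminating, so that $d\cdot d\le t<k$. None of these three points is established in the proposal, so as written it is a plan, not a proof.

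A secondary caution: Woolbright's theorem is stated and proved for Latin squares, where every row--column pair occurs exactly once; the paper's version is for arbitrary families of $k$ matchings of size $k$, and the authors flag this by writing ``though in a somewhat different context.'' If you intend to ``run the entire argument on Latin squares and invoke that correspondence,'' you must check that the steps of Woolbright's proof use only the matching structure and not the completeness of the Latin square, or else re-derive the bound directly in the bipartite setting. This is plausible but is another gap that the proposal leaves unaddressed.
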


Another simple fact, already noted above in the case $k-g(r,k)=1$:
\begin{observation}
$f(r,k)-k \ge k-g(r,k)$
\end{observation}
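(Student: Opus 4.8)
\noindent\emph{Proof plan.} The strategy is to lower-bound $f(r,k)$ by starting from an extremal family for $g(r,k)$ and padding it with fresh, pairwise disjoint edges. Write $g=g(r,k)$. The case $g=k$ is immediate, since then the assertion just says $f(r,k)\ge k$, which holds because $k$ copies of a single matching of size $k-1$ have no rainbow matching of size $k$. So assume $g\le k-1$. By the maximality in the definition of $g$, there is a family $F_1,\dots,F_k$ of matchings of size $k$ in some $r$-partite graph $H$ having no partial rainbow matching of size $g+1$; that is, one cannot choose $g+1$ pairwise disjoint edges, one from each of $g+1$ distinct $F_i$'s.

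Next I would pad this family. Set $m=k-g-1\ge 0$, introduce $m$ pairwise disjoint edges $e_1,\dots,e_m$ on entirely new vertices (enlarging each part of $H$ accordingly), and let $F_i'=F_i\cup\{e_1,\dots,e_m\}$ for every $i$. Each $F_i'$ is a matching of size $k+m=2k-g-1$ in the enlarged $r$-partite graph. It therefore suffices to prove that $F_1',\dots,F_k'$ has no full rainbow matching, for then $f(r,k)>2k-g-1$, i.e.\ $f(r,k)-k\ge k-g=k-g(r,k)$.

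This last claim is the crux. Suppose $a_1,\dots,a_k$ is a rainbow matching with $a_i\in F_i'$. Each $a_i$ is either ``old'' ($a_i\in F_i$) or ``new'' ($a_i\in\{e_1,\dots,e_m\}$). Since the edges of a matching are pairwise disjoint while the $e_j$'s are not, no $e_j$ can occur twice among $a_1,\dots,a_k$, so at most $m$ of the $a_i$ are new and hence at least $k-m=g+1$ are old. These $\ge g+1$ old edges are pairwise disjoint and drawn from $\ge g+1$ distinct $F_i$'s, contradicting the choice of $F_1,\dots,F_k$.

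I do not expect any real obstacle: this is essentially the ``familiar device'' used in the excerpt to deduce Conjecture~\ref{emptyhalf} from Conjecture~\ref{fullhalf}, now run with the appropriate number $m$ of padding edges. The only points needing care are the degenerate ranges --- $k=1$, and $g=k-1$, where $m=0$ and the extremal family itself already witnesses $f(r,k)\ge k+1$, recovering the earlier observation based on Example~\ref{standard} --- and the trivial bookkeeping that a rainbow matching uses each new edge at most once.
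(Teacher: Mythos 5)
Your proof is correct and uses the same padding device as the paper, just run in the contrapositive direction: the paper pads an \emph{arbitrary} family of size-$k$ matchings by $f(r,k)-k$ fresh disjoint edges and invokes the definition of $f$ to extract a large partial rainbow matching (lower-bounding $g$), while you pad an \emph{extremal witness} for $g$ by $k-g(r,k)-1$ fresh disjoint edges and show the padded family still has no full rainbow matching (lower-bounding $f$). Both arguments amount to showing $f(r,k)+g(r,k)\ge 2k$, and the key bookkeeping (a rainbow matching can use each fresh edge at most once) is identical.
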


\begin{proof}
Write $p$ for $f(r,k)-k$. Let $F_1, \ldots ,F_k$ be $k$ matchings of size $k$, in an $r$-partite hypergraph, we want to prove the existence of a partial rainbow matching of size $k-p$. Let $Q=\{e_1,\ldots ,e_p\}$ be
a matching, whose edges  are disjoint from  $\bigcup_{i \le k} F_i$, and let $F'_i=F_i \cup Q$.  By the definition of $f(r,k)$,
the matchings $F'_i$ have a (full) rainbow matching, and removing the edges belonging to $Q$ yields a partial rainbow matching of size at least $k-p$.
\end{proof}

The following observation shows that $k-g(r,k)$ is not bounded by a constant:

\begin{observation}\label{obs:gsmall}
 $g(r,2^{r-1})\le 2^{r-2}$.
\end{observation}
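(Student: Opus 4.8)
The plan is to produce a single $r$-partite $r$-uniform hypergraph $H_r$, together with $2^{r-1}$ matchings of size $2^{r-1}$ in it that have no rainbow matching of size $2^{r-2}+1$. Set $N=2^{r-1}$, take each of the $r$ sides of $H_r$ to be a copy of $\mathbb{Z}_N$, and fix the set of $r$ coefficients $C=\{0,1,2,4,\dots,2^{r-2}\}\subseteq\mathbb{Z}_N$. Listing $C$ as $c_1,\dots,c_r$, let $F_m=\{(j+c_1m,\dots,j+c_rm):j\in\mathbb{Z}_N\}$ for each $m\in\mathbb{Z}_N$. Two edges of $F_m$ with parameters $j\ne j'$ differ in every coordinate, so each $F_m$ is a matching of size $N=2^{r-1}$, and there are $N=2^{r-1}$ of them. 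An edge of $F_m$ with parameter $j$ and an edge of $F_{m'}$ with parameter $j'$ are disjoint precisely when $(j-j')+c(m-m')\ne 0$ for every $c\in C$. Hence a rainbow matching of size $s$ is exactly a set $B\subseteq\mathbb{Z}_N$ with $|B|=s$ (the colours used) together with a function $a\colon B\to\mathbb{Z}_N$ (the chosen parameters) such that for every $c\in C$ the map $t\mapsto a(t)+ct$ is injective on $B$. Thus the Observation follows from the following claim: \emph{if $B\subseteq\mathbb{Z}_{2^{r-1}}$ and $a\colon B\to\mathbb{Z}_{2^{r-1}}$ are such that $t\mapsto a(t)+ct$ is injective on $B$ for every $c\in\{0,1,2,4,\dots,2^{r-2}\}$, then $|B|\le 2^{r-2}$.}

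I would prove this claim by induction on $r$. For $r=2$ the coefficient set is $\{0,1\}$, and the two conditions $a(t)\ne a(t')$ and $a(t)+t\ne a(t')+t'$ are incompatible with $|B|=2$, since over $\mathbb{Z}_2$ they would force $t\equiv t'$ and $t\not\equiv t'$ at once (for $r=2$ the hypergraph is exactly Example~\ref{standard} with $k=2$). For the inductive step, split $B$ into four classes according to the parities of $t$ and of $a(t)$. Inside one class $t-t'$ and $a(t)-a(t')$ are both even; writing $t-t'=2d$ and $a(t)-a(t')=2e$, the relation $(a(t)-a(t'))+c(t-t')\ne 0$ in $\mathbb{Z}_{2^{r-1}}$ turns into $e+cd\ne 0$ in $\mathbb{Z}_{2^{r-2}}$, and as $c$ ranges over $C$ the residue $c\bmod 2^{r-2}$ ranges over $\{0,1,2,4,\dots,2^{r-3}\}$ — the coefficient set for $r-1$. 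So after halving, each class is an instance of the same problem one level down, whence each class has at most $2^{r-3}$ elements. It then remains to show that the four classes cannot all be full at once, which would bring $4\cdot 2^{r-3}$ down to $2^{r-2}$; this is to be extracted from the cross-class conditions, for instance: $c=0$ says $a$ is injective on $B$, so (reading mod $2$) it sends the two classes with even $a$-value to disjoint halves of $\mathbb{Z}_{2^{r-1}}$ and likewise the two with odd $a$-value; $c=1$ does the same for $t\mapsto a(t)+t$ but pairs the classes differently; and the higher coefficients, together with a mod-$4$ refinement of the splitting, are what one would use to force the total down to $2^{r-2}$.

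The genuinely delicate point is this last stage. Each residue class, taken alone, is a full-strength problem for $r-1$ and may carry $2^{r-3}$ points, so a naive union only yields $|B|\le 2^{r-1}$ — a factor of $2$ too weak — and no single coefficient removes that factor, since the mod-$2$ bookkeeping above already saturates at $2^{r-1}$. Winning back the last factor of $2$ forces one to use several of the coefficients simultaneously and to track the interaction of the classes at depth (mod $4$, mod $8$, …), and it is this bookkeeping, rather than the reduction itself, that is the heart of the matter; the choice of $C$ as $0$ together with the powers of $2$ up to $2^{r-2}$ is made exactly so that the reduction to level $r-1$ closes on the nose (note also that $B=\{0,1,\dots,2^{r-2}-1\}$ with $a=\mathrm{id}$ is an admissible configuration, so the bound $2^{r-2}$ cannot be improved for this family).
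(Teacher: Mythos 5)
Your construction is a genuine departure from the paper's, but the proof is not finished, and you say so yourself. The reduction mod~$2$ only shows that each of the four residue classes satisfies the level-$(r-1)$ problem, which bounds each class by $2^{r-3}$ and hence $|B|$ by $4\cdot 2^{r-3}=2^{r-1}$ --- a factor of two short of the claim. You describe ``winning back the last factor of $2$'' as ``the heart of the matter'' and sketch what one might try (cross-class conditions, mod-$4$ refinements), but you do not carry it out. That missing step is precisely the content of the observation, so what you have is a plausible construction plus a heuristic, not a proof. (The construction itself looks sound: for $r=2,3$ one can verify directly that no rainbow matching of size $2^{r-2}+1$ exists, and $B=\{0,\dots,2^{r-2}-1\}$, $a=\mathrm{id}$ does achieve size $2^{r-2}$. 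But a construction without the matching upper-bound argument proves nothing about $g$.)

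For comparison, the paper's proof is short and entirely combinatorial. It first exhibits (Lemma~\ref{twor-1}) an $r$-partite hypergraph on $2r$ vertices --- two vertices $a_i,b_i$ per side --- with $2^{r-1}$ matchings of size~$2$ and no rainbow matching: the matchings are $M_T=\{e_T,f_T\}$ where $e_T=\{a_i:i\in T\}\cup\{b_i:i\notin T\}$ and $f_T$ is its complement, indexed by $T\subseteq[r]$ up to complementation; any two edges from different pairs $M_T,M_{T'}$ share a vertex, so no two of the $2^{r-1}$ choices can be disjoint. It then takes $2^{r-2}$ vertex-disjoint copies $S_1,\dots,S_{2^{r-2}}$ of this gadget and forms $k=2^{r-1}$ matchings of size $k$, each consisting of one pair $M_T$ from each copy. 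A rainbow matching can use at most one edge per copy (since inside a copy, edges coming from distinct $M_T$'s always intersect), so its size is at most $2^{r-2}$. No induction, no arithmetic, and the factor of $2$ you are struggling with never arises. If you want to salvage your number-theoretic construction, you would need to prove the claim ``$|B|\le 2^{r-2}$'' outright; as it stands the write-up is an incomplete alternative, and the simplest repair is to replace it with the disjoint-copies argument.
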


The proof  uses:

\begin{lemma}\label{twor-1}
For every $r>1$ there exists a system  of $2^{r-1}$ matchings
of size $2$ in an $r$-partite hypergraph,  not possessing a rainbow
matching.
\end{lemma}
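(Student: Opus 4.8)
The plan is simply to exhibit such a system explicitly. I would take as ground hypergraph the $r$-partite, $r$-uniform hypergraph $H$ in which each of the $r$ sides has exactly two vertices. An edge of $H$ selects one vertex from each side, so it is naturally coded by a vector $x=(x_1,\dots ,x_r)\in\{0,1\}^r$, and I will just call that edge $x$. With this coding, two edges $x$ and $y$ are vertex-disjoint exactly when they differ in every coordinate, that is, when $y$ is the bitwise complement $\bar x$ of $x$; in every other case $x$ and $y$ meet.

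For each $x\in\{0,1\}^r$ put $F_x=\{x,\bar x\}$. Since $x$ and $\bar x$ differ in all coordinates, $F_x$ is a matching of size $2$; and since $F_x=F_{\bar x}$ while these are the only repetitions, the family $\{F_x : x\in\{0,1\}^r\}$ is a system of exactly $2^r/2=2^{r-1}$ matchings of size $2$. I would then prove the (strong) statement that no two edges taken from two distinct members of this system are disjoint; this already shows the system has no rainbow matching of size $2$, and a fortiori no rainbow matching at all.

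For the verification, suppose $F_x\neq F_y$, which is equivalent to $y\neq x$ and $y\neq\bar x$. It must be shown that each of the four pairs $\{x,y\},\{x,\bar y\},\{\bar x,y\},\{\bar x,\bar y\}$ consists of edges that meet. By the criterion above, a pair of edges fails to meet precisely when one is the complement of the other; and the complement of $\bar x$ is $x$, so these four pairs are complementary pairs only if, respectively, $y=\bar x$, $y=x$, $y=x$, $y=\bar x$ — all of which are excluded. Hence all four pairs meet, as required.

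The content lies entirely in finding the right picture: once the matchings are viewed as antipodal pairs in the Boolean cube $\{0,1\}^r$, the identity ``$x$ and $y$ are disjoint iff $y=\bar x$'' makes the verification immediate and the bookkeeping that yields exactly $2^{r-1}$ matchings is automatic, so I do not foresee a genuine obstacle beyond settling on this model. As a sanity check, for $r=2$ the construction yields the two matchings $\{(a_1,b_1),(a_2,b_2)\}$ and $\{(a_1,b_2),(a_2,b_1)\}$, that is, Example~\ref{standard} with $k=2$; and if a later application needed the sides of $H$ to be large, one may harmlessly replace each vertex by arbitrarily many copies.
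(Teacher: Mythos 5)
Your construction is exactly the paper's: your vectors $x$ and $\bar x$ are the paper's edges $e_T$ and $f_T$ for $T=\{i: x_i=1\}$, and your matchings $F_x=F_{\bar x}$ are its $M_T=M_{[r]\setminus T}$, so this is the same proof, with the added benefit that you spell out the verification the paper dismisses with ``clearly'' (namely that edges from distinct matchings always meet, since disjointness forces complementarity). The argument is correct.
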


\begin{proof}
Let $a_i,b_i$ be distinct elements, $1 \le i \le r$, and for every subset $T$ of $[r]$ let
$e_T=\{a_i: i \in T\}\cup \{b_i: i \not \in T\}$ and $f_T=\{a_i: i \not \in T\}\cup \{b_i: i  \in T\}$, and let $M_T=\{e_T,f_T\}$. Since $M_T=M_{[r]\setminus T}$, there are $2^{r-1}$ such matchings, and clearly they do not possess a rainbow matching.
\end{proof}

\begin{proof} (of Observation \ref{obs:gsmall}):
 Take $2^{r-2}$ disjoint copies $S_i$ of the above construction, and
 let $N_i$ be the set of $2^{r-1}$ matchings of size $2$ in  $S_i$ from that construction.
Decompose $\bigcup N_i$ into $k=2^{r-1}$  matchings $M_i$  of size
$k$, each consisting of one pair $e_T,f_T$ from each $S_i$.  The
largest partial rainbow matching of the matchings $M_i$ is of size
$2^{r-2}$, obtained by choosing one edge from each $S_i$.
\end{proof}

We do not know of any examples refuting $g(r,k) \ge k-2^{r-2}$ or $f(r,k) \le k+2^{r-2}$ (if true, this would fit in with Conjecture \ref{fullhalf}).

In the next two sections we shall prove the following two theorems:

\begin{theorem}\label{sevenfourths}
$f(2,k) \le \frac{7}{4}k$.
\end{theorem}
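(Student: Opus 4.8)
The plan is to argue by contradiction via a maximum partial rainbow matching together with alternating-path exchanges. Suppose $F_1,\dots,F_k$ are matchings of size $\lceil\tfrac74 k\rceil$ in a bipartite graph with parts $(X,Y)$ and that they admit no full rainbow matching. Among all partial rainbow matchings choose one, $M=\{g_1,\dots,g_m\}$ with $g_i\in F_i$, that is maximum, and — for the finer part of the argument — also optimal with respect to a suitable secondary parameter (for instance minimizing the number of edges of $\bigcup_j F_j$ that meet $M$, in order to prevent the exchange process from cycling). By assumption $m\le k-1$, so some matching, say $F_{m+1}$, is unrepresented. The first step is to record the basic consequence of maximality: every edge of every unrepresented matching $F_j$ meets $M$, and since $F_j$ is itself a matching, each $g_i=(x_i,y_i)$ meets at most two of its edges, one at $x_i$ and one at $y_i$. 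Counting, the number of $g_i$ that meet \emph{two} edges of $F_j$ — call them the edges of $M$ \emph{doubly blocked} by $F_j$ — is at least $|F_j|-m\ge\lceil\tfrac74 k\rceil-(k-1)>\tfrac34 k$.

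Next I would exploit this large doubly-blocked set through exchanges. Consider first the case $m\le k-2$, so there are two unrepresented matchings $F,F'$; their doubly-blocked sets each have size $>\tfrac34 k$, hence more than $\tfrac12 k$ edges $g_i=(x_i,y_i)$ are doubly blocked by both, giving $e=(x_i,\cdot),\,e'=(\cdot,y_i)\in F$ and $f=(x_i,\cdot),\,f'=(\cdot,y_i)\in F'$. For such an index the exchange $M-g_i+e+f'$ is a rainbow matching of size $m+1$, a contradiction, provided neither $e$ nor $f'$ meets $M-g_i$; the symmetric exchange $M-g_i+e'+f$ is a second escape route (these are the only two valid pairings, since $e,f$ share $x_i$ and $e',f'$ share $y_i$). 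So if no augmentation exists, for each of the $>\tfrac12 k$ such indices some endpoint of one of these four edges lies on a different $g_\ell$, which places $g_\ell$ on the first layer of an alternating structure rooted at the free matchings. I would then iterate, performing a layered BFS in which reaching an edge disjoint from the current matching yields an augmenting exchange, showing that the doubly-blocked density forces the layers to keep growing until the total number of edges of $M$ involved exceeds $m$. The remaining case $m=k-1$ (only $F_k$ free) is handled the same way, starting the alternating structure from $F_k$ and using that its doubly-blocked set already has size $>\tfrac34 k$; here the secondary-optimality choice of $M$ is what keeps the exchanges from merely cycling back.

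The main obstacle — and the place where the constant $\tfrac74=2-\tfrac14$ is actually forced — is the bookkeeping in this alternating/layered argument: one must show that each unit of double blocking either produces an augmenting exchange or contributes a genuinely new edge of $M$ to the next layer, and then charge things so that $m$ edges of $M$ cannot sustain more than $\tfrac34 k$ double blocks per free matching without some augmenting exchange appearing. Controlling the mutual vertex conflicts between the two exchange edges, and the degenerate cases where some $F_i$ literally contains the edge $(x_i,y_i)$ or where a vertex is used twice, needs care but is routine. The rest — reducing to matchings of size exactly $\lceil\tfrac74 k\rceil$ by padding all matchings to a common size with fresh disjoint edges, and the rounding — I expect to be straightforward.
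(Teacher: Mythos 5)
Your proposal has the right opening gambit but leaves the load-bearing part of the argument unwritten. You correctly identify that a maximum (or by induction, size-$(k-1)$) partial rainbow matching $M$ forces, for any unrepresented matching, a large set of ``doubly blocked'' edges of $M$ --- edges $g_i$ meeting two edges of that matching, one at each endpoint --- and that this is the fuel for an exchange argument. This is the same starting observation the paper uses. But you then say that iterating exchanges in a ``layered BFS'' structure, using a secondary optimality criterion, will ``force the layers to keep growing until the total number of edges of $M$ involved exceeds $m$,'' and you explicitly flag the resulting bookkeeping as ``the main obstacle'' while also calling it ``routine.'' That is precisely the part you have not done, and it is where the constant $7/4$ lives. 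Without it there is no proof.

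Concretely, the paper does not run a BFS. It builds a \emph{linear} chain: starting from the edge $f_2$ of $M$ that meets two edges of $F_1$, it repeatedly picks from $F_j$ two edges $f_j',f_j''$ that both meet the next chain edge $f_{j+1}$ of $M$ and meet nothing else previously used, reindexing so the chain is $f_2,f_3,\dots,f_p$, and stops at the first index $p$ where no such pair exists. It then partitions the vertices into $P_1$ (chain plus its flanking pairs), $P_2$ (the remaining edges of $M$ that are hit by $F'_1$), $P_3$ (edges of $M$ not hit by $F'_1$), and $S$ (everything else), proves via short alternating-path claims that $F_p$ has no edges inside $S$ or between $S$ and $P_1$, at most $t-p$ edges between $S$ and $P_2$, and at most $p$ edges inside $P_1$, and finally solves a small linear system in the eight edge-counts $t_1,\dots,t_8$ to get $3n\le 6k-t<6k-(n-k)$, i.e.\ $n<7k/4$. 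Nothing in your sketch produces those claims or that inequality; the secondary-minimality device you propose to ``prevent cycling'' is not used in the paper and you do not show how it yields a quantitative bound. You should also note that looking at two unrepresented matchings simultaneously (your $m\le k-2$ case) is not obviously helpful: even with $>\tfrac12 k$ indices doubly blocked by both, the exchange $M-g_i+e+f'$ can fail because $e$ or $f'$ meets some other $g_\ell$, and you have not bounded how often that happens. In short, the high-level idea is aligned with the paper, but the decisive counting step is missing.
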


\begin{theorem}\label{tripartitehalf}
$g(3,k) \ge \frac{1}{2}k$.
\end{theorem}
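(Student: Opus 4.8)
The plan is to work with a maximum partial rainbow matching $R$ and to show $|R|\ge k/2$ by an augmentation argument: assuming $|R|=s<k/2$ I will build a partial rainbow matching of size $s+1$, a contradiction. Reindex so that $R=\{e_1,\dots,e_s\}$ with $e_i\in F_i$, write $e_i=\{x_i,y_i,z_i\}$ and $V(R)=\bigcup_i e_i$, so $|V(R)|=3s$. Since $s<k/2$, at least $s+1$ of the $k$ matchings are not used by $R$.

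First I would show that every unused matching has many \emph{swap edges}, where a swap edge of a matching $F$ is an edge meeting $V(R)$ in exactly one vertex. Indeed, for unused $F$ every edge meets $V(R)$ (else $R$ extends), and as the edges of $F$ are pairwise disjoint, $\sum_{e\in F}|e\cap V(R)|\le|V(R)|=3s$; since each summand is $\ge 1$ and the sum is $\ge|F|=k$, at least $2k-3s$ edges of $F$ are swap edges, and these meet $V(R)$ in $2k-3s$ \emph{distinct} vertices. The inequality $s<k/2$ gives $2k-3s>s$, so every unused matching has more than $s$ swap edges, hitting more than $s$ distinct vertices of $V(R)$.

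The augmentation is a ``remove one, add two'' move: suppose we find an index $m$, two \emph{distinct} unused matchings $F_j,F_{j'}$, and swap edges $g\in F_j$, $g'\in F_{j'}$ such that $g\cap V(R)=\{u\}$ and $g'\cap V(R)=\{w\}$ for two distinct vertices $u,w$ of $e_m$, with $g\cap g'=\emptyset$. Then $R':=(R\setminus\{e_m\})\cup\{g,g'\}$ is a partial rainbow matching (it uses $F_j$ and $F_{j'}$ instead of $F_m$; $g$ is disjoint from $R\setminus\{e_m\}$ since it met $V(R)$ only inside $e_m$, similarly $g'$, and $g\cap g'=\emptyset$), so $|R'|=s+1$ and we are done. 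Hence everything reduces to producing such a configuration.

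To find the configuration I would first note that if no $e_m$ were hit by swap edges at two of its vertices, then the swap edges of any single unused matching would hit at most $s$ vertices of $V(R)$ (one per $e_m$), contradicting the previous paragraph; so some $e_m$ is hit at two of its vertices. A short incidence count — summing over the $\ge s+1$ unused matchings and the $s$ triples of $R$ the number of vertices of each triple hit by that matching's swap edges, and setting aside the triples hit at only one vertex, or by only one matching — upgrades this, for all but small $k$, to: some $e_m$ is hit at two distinct vertices by swap edges lying in two distinct unused matchings, and in fact is hit by many unused matchings. Fix such an $e_m$. The \textbf{main obstacle} is then to choose the two swap edges so that $g\cap g'=\emptyset$: if $g$ meets $V(R)$ only at $u\in e_m$ and $g'$ only at $w\in e_m$ with $u\ne w$, then $g$ and $g'$ are ``free'' in exactly one common coordinate (the one whose $e_m$-entry is neither $u$ nor $w$), and $g\cap g'=\emptyset$ fails only if they happen to agree in that one coordinate. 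I expect the heart of the proof to be showing that, exploiting the many unused matchings hitting $e_m$ (and, if necessary, a swap edge hitting the third vertex of $e_m$), this single coincidence can always be avoided — and that the situations where every choice fails force a rigid configuration, essentially a copy of the obstruction of Lemma~\ref{twor-1}, which cannot occur simultaneously at all the heavily-hit triples. With the configuration in hand the ``remove one, add two'' move finishes the contradiction, giving $g(3,k)\ge\lceil k/2\rceil\ge k/2$; the few small values of $k$ not handled by the incidence count can be checked by hand.
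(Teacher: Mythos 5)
Your setup is the same as the paper's: take a maximum partial rainbow matching $R$, observe that for every unused matching all $k$ edges hit $V(R)$, and count \emph{swap edges} (edges meeting $V(R)$ in exactly one vertex); your count $2k-3s$ of swap edges per unused matching is correct, and your ``remove one, add two'' augmentation is exactly the move that the paper also exploits. But beyond this point the proposal stops being a proof. The two places you wave at — ``a short incidence count upgrades this\ldots'' and ``I expect the heart of the proof to be showing that this single coincidence can always be avoided'' — are precisely where the real work lies, and they are not carried out.

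Concretely, two things are missing. First, you need a quantitative statement tying the number of augmentable configurations to $s$ and $k$. The paper does this by defining $e_i\in R$ to be \emph{good} for an unused $F_j$ if there are two swap edges of $F_j$ hitting $e_i$, and then proving via the weighting $\phi(f)=\sum_{e\in F_j}|e\cap f|/|e\cap\bigcup R|$ that each unused $F_j$ has at least $k-2s$ good edges of $R$. Your pigeonhole only yields that \emph{some} $e_m$ is hit at two vertices, with no control over how often this happens, so there is nothing to plug into a final inequality. Second — and this is the real crux — you need to show that a sufficiently rich good configuration forces a disjoint cross-matching pair $g\in F_j$, $g'\in F_{j'}$. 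The paper proves the sharp structural fact that no $e_i$ can be good for three distinct unused matchings (Claim 4.5), by a careful case analysis of how six swap edges from three matchings can pairwise intersect; this is not the obstruction of Lemma \ref{twor-1} (four matchings of size $2$), and it is not a ``single coincidence'' one can just perturb away. Only with both the per-matching count and the multiplicity bound does one get $2s\ge (k-s)(k-2s)$, hence $s>k/2-1$. As written, your proposal identifies the right skeleton but leaves the two load-bearing lemmas unproved.
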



\section {Proof of Theorem \ref{sevenfourths}}\label{sec:proof1}
Let $G$ be a bipartite graph with sides $A$ and $B$, and let $F_{i},
i=1,\ldots, k$ be matchings of size $n$ in $G$, with
 $n\geq 7k/4$.
We have to show that they possess a rainbow matching. We assume, for contradiction,
that this is not the case. We also apply an inductive hypothesis, by which we may assume that the matchings $F_2, \ldots ,F_k$ have a rainbow matching
$M=\{f_{2},f_{3},\ldots,f_{k}\}$, where $f_{i}\in F_{i}$.

By the assumption that $\cf$ does not possess a rainbow matching, every edge in $F_1$ meets some vertex of $\bigcup M$. Denote by $F'_{1}$ the set of edges of $F_{1}$ that are incident with exactly one vertex in $\bigcup M$. Since $F_{1}$ is a matching of size $n$, there are at least $(n-k+1)$ vertices in $A \setminus \bigcup M$  that are incident with an edge of $F'_{1}$. Similarly, there are at least $(n-k+1)$ vertices in $B \setminus \bigcup M$  that are incident with an edge of $F'_{1}$.
 Hence at least $(n-k+1)$ edges in $M$ meet an edge in $F'_{1}$, and since $n>7k/4$ implies $2(n-k+1)>k$, we have that there must be some edge in $M$ incident with two edges in $F'_{1}$. Without loss of generality, assume that $f_{2}$ is one of these edges and let $f'_{1}$ and $f''_{1}$ be the two edges in $F'_{1}$ meeting $f_2$. We may also assume that the edges of
$M$ incident with at least one edge of $F'_{1}$ belong, respectively, to
$F_2,F_3,\ldots,F_t$ (where, as shown above,  $t> n-k$).

%
%

Now choose, if possible, two edges $f_2',f_2'' \in F_2$ satisfying:

\begin{enumerate}
\item
both $f_2'$ and $f_2''$ meet
 $f_{i_3}$ for some $2<i_3 \le t$.

 \item
 both $f_2'$ and $f_2''$ do not meet
 any other
 edge from $M$ or any of the edges $f'_1,f_1''$.
 \end{enumerate}

 Without loss of generality,
we may assume that $i_3=3$.  Then choose, if
 possible, two edges $f_3',f_3'' \in F_3$ incident with
 $f_{i_4}$ for some $3<i_4 \le t$,  such that both do not meet any
 other edge from $M$ or any of the edges $f'_j,f_j'',~j <3$. Without loss of generality,
we may assume that $i_4=4$. Continuing this way until we reach a stage $p$ in which a choice as above is impossible, we obtain a sequence of edges $f_j',f_j''$ for $1 \le j <p$,
both meeting $f_{j+1}$, but not meeting any other edge of $M$ or any other
$f_i'$ or $f_i''$.

%
%

Write $M_1=\{f_2, \ldots ,f_p\}$, and let $P_1$ be the set of
vertices $\bigcup_{1 \le j < p} (f_j' \cup f_j'')$ (note that $P_{1}$ contains all vertices from $\bigcup M_{1}$). Let
$M_2=\{f_{p+1},\ldots, f_t\},~M_3=\{f_{t+1},\ldots ,f_k\}$ and let
$P_2=\bigcup M_2,~ P_3=\bigcup M_3$. We have:  $|P_1|=4(p-1),~ |P_2|=2(t-p), |P_3|=2(k-p)$. Let $S =V(G) \setminus (P_1 \cup P_2 \cup P_3)$.

\begin{figure}[h!]
 \begin{center}
\end{center}
\label{figproof74}
\end{figure}

\begin{claim} There are at most $t-p$ edges of $F_{p}$ joining a vertex  $P_{2}$ with a vertex of $S$.
\end{claim}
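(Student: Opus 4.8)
The plan is to exploit the maximality built into the selection process: stage $p$ is by definition a stage at which \emph{no} admissible pair $f_p',f_p''\in F_p$ can be found (where "admissible" means conditions (1)--(2) together with disjointness from all the earlier $f_j',f_j''$). First I would fix notation by writing $f_i=(a_i,b_i)$ with $a_i\in A$ and $b_i\in B$ for $p<i\le t$, so that $P_2=\bigcup M_2$ is the disjoint union of $\{a_i:p<i\le t\}$ and $\{b_i:p<i\le t\}$. Since $F_p$ is a matching and $P_2\cap S=\emptyset$, every edge of $F_p$ joining a vertex of $P_2$ to a vertex of $S$ uses exactly one vertex of $P_2$, distinct such edges use distinct vertices of $P_2$, and that vertex is $a_i$ or $b_i$ for a unique index $i$ with $p<i\le t$. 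Thus it suffices to prove that for no such $i$ are \emph{both} $a_i$ and $b_i$ the $P_2$-endpoint of an $F_p$-edge into $S$; this leaves at most one edge per index, hence at most $t-p$ edges in total.

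To prove this sub-claim I would argue by contradiction: suppose that for some $i$ with $p<i\le t$ there are edges $e'=(a_i,s')\in F_p$ and $e''=(s'',b_i)\in F_p$ with $s',s''\in S$. These are distinct edges of $F_p$ (their $A$-endpoints are $a_i\in P_2$ and $s''\in S$, which differ), and both are incident with $f_i$, one at $a_i$ and one at $b_i$. I would then check that $e',e''$ constitute an admissible choice for $f_p',f_p''$ with $i_{p+1}=i$, contradicting the termination at stage $p$: both meet $f_i$ with $p<i\le t$; their other endpoints $s',s''$ lie in $S$, which is disjoint from $\bigcup M$ because $\bigcup M=\bigcup M_1\cup\bigcup M_2\cup\bigcup M_3\subseteq P_1\cup P_2\cup P_3$, so (as $M$ is a matching) neither $e'$ nor $e''$ meets an edge of $M$ other than $f_i$; and $s',s''\notin P_1$ since $S\cap P_1=\emptyset$ by definition of $S$, while $a_i,b_i\notin P_1$ since $a_i,b_i\in P_2$ and $P_1\cap P_2=\emptyset$, so neither $e'$ nor $e''$ meets any of the edges $f_j',f_j''$ with $j<p$. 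That is exactly admissibility, the desired contradiction.

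The only things that need verification along the way are the two bookkeeping facts $\bigcup M\subseteq P_1\cup P_2\cup P_3$ and $P_1\cap P_2=\emptyset$. For the first, $P_2=\bigcup M_2$ and $P_3=\bigcup M_3$ are immediate, and $\bigcup M_1\subseteq P_1$ holds because for each $j<p$ the two endpoints of $f_{j+1}$ are split between $f_j'$ and $f_j''$ (already noted in the construction). For the second, the vertices of each pair $f_j',f_j''$ with $j<p$ avoid every edge of $M$ except $f_{j+1}\in M_1$, hence avoid $\bigcup M_2=P_2$, and $\bigcup M_1$ is disjoint from $\bigcup M_2$ since $M$ is a matching. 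I do not expect a genuine obstacle: the whole argument is a short counting/matching argument whose only real ingredient is the stopping rule defining $p$, and the bulk of the work is merely checking that the two exhibited edges satisfy each clause of admissibility.
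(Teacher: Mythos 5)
Your proof is correct and follows essentially the same route as the paper: the paper's own (terser) argument is simply that the stopping rule at stage $p$ forbids two edges of $F_p$, both incident with $S$, from meeting the same $f\in M_2$, and then counts one edge per element of $M_2$. Your version spells out the admissibility check and the two bookkeeping facts ($\bigcup M\subseteq P_1\cup P_2\cup P_3$ and $P_1\cap P_2=\emptyset$) that the paper leaves implicit, but the underlying idea and the counting are identical.
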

\begin{proof}
Since the process of choosing edges $f_j',~f_j''$ terminated at
$j=p$, there do not exist $g,h \in F_{p}$ incident with $S$ and
incident with the same $f \in M_2$. Since $M_{2}$ contains $t-p$
edges, this proves the claim.
\end{proof}

\begin{claim}
There are no edges of $F_{p}$ between $S$ and $P_1$ or inside $S$.
\end{claim}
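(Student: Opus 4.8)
The plan is to argue by contradiction. Suppose there is an edge $e\in F_p$ with both ends in $S$, or with one end in $S$ and one end in $P_1$. From $e$ I will build a rainbow matching of all of $F_1,\dots,F_k$, contradicting the standing assumption that $F_1,\dots,F_k$ have no rainbow matching. The idea is that $e$ can play the role that the missing pair $f_p',f_p''$ would have played at the end of the chain built in the selection process: using $e$ as the representative of $F_p$ (in place of $f_p$) frees $f_p$, so $f_{p-1}'$ can represent $F_{p-1}$, which frees $f_{p-1}$, so $f_{p-2}'$ can represent $F_{p-2}$, and so on down to $f_2'$ representing $F_2$, which frees $f_2$ and lets us finally insert the edge $f_1'\in F_1$ that was all along blocked only by $f_2$.

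Recall the properties established during the construction: $f_1',f_1''\in F_1$ both meet $f_2$; for $1<j<p$ the edges $f_j',f_j''\in F_j$ both meet $f_{j+1}$; the family $\{f_j',f_j''\mid 1\le j<p\}$ is pairwise disjoint; and each of these edges meets $\bigcup M$ only in $f_{j+1}$ (for $j=1$, only in $f_2$). If $e$ has both ends in $S$, I claim that
\[
 N=\{f_1',f_2',\dots,f_{p-1}',\,e,\,f_{p+1},\dots,f_k\}
\]
is a rainbow matching of $F_1,\dots,F_k$. If $e$ instead has an end $v$ in $P_1$, then $v$ lies in exactly one of the edges $f_1',f_1'',\dots,f_{p-1}',f_{p-1}''$; say $v\in f_j'$ (otherwise interchange the names $f_j',f_j''$), and then I take
\[
 N=\{f_1',\dots,f_{j-1}',\,f_j'',\,f_{j+1}',\dots,f_{p-1}',\,e,\,f_{p+1},\dots,f_k\},
\]
the point of using $f_j''$ in place of $f_j'$ being that $f_j''$ is disjoint from $e$.

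Checking that $N$ is a rainbow matching is then routine. It contains one edge of each $F_i$: an edge from the pair $\{f_i',f_i''\}$ for $i<p$, the edge $e$ for $i=p$, and $f_i$ for $i>p$. The chosen edges from the pairs $\{f_i',f_i''\}$ are pairwise disjoint, and each meets $\bigcup M$ only at some index $\le p$, hence is disjoint from $f_{p+1},\dots,f_k$. Finally, $e$ is disjoint from every chosen $f_i'$ or $f_i''$ — its end in $S$ avoids $P_1$, and its other end (if any) lies in $f_j'$, which is disjoint from all the other chosen edges and from $f_j''$ — and $e$ is disjoint from $f_{p+1},\dots,f_k$, since its $S$-end avoids $P_2\cup P_3$ and its $P_1$-end, lying in $f_j'$, meets $\bigcup M$ only at index $j+1\le p$. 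The existence of $N$ contradicts the hypothesis, and the claim follows.

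I expect the only delicate point to be the case where $e$ reaches into $P_1$: one has to be sure the chain of substitutions still closes up. It does, precisely because at the index $j$ where $e$ touches $P_1$ there is a second edge $f_j''$ of $F_j$ missing $e$ — which is why the selection process was designed to produce the $f_j',f_j''$ in disjoint pairs, each meeting $\bigcup M$ only in $f_{j+1}$. Everything else is bookkeeping with those disjointness relations.
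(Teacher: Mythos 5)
Your proof is correct and is essentially the same argument as the paper's, which also swaps $f_p$ for the offending edge and then propagates the substitution down the chain, replacing each $f_i$ ($i<p$) with one of $f_i',f_i''$. The paper phrases it as a one-line ``alternating path'' argument (``at least one of $f_{p-1}',f_{p-1}''$ does not meet $f$, and so on''), while you make the same idea explicit: since the $f_i',f_i''$ are pairwise disjoint and the offending edge has at most one endpoint in $P_1$, there is at most one index $j$ where a swap from $f_j'$ to $f_j''$ is forced, and you write out the resulting rainbow matching $N$ and verify disjointness directly. No gap; your write-up is just a more detailed version of the paper's sketch.
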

\begin{proof}
If such an edge $f$ existed,  it would start an alternating path whose application to $M$ would result
in a rainbow matching for $F_1, \ldots ,F_k$: replace $f_p$ by $f$ as a representative for $F_p$; at least one of $f_{p-1}',f_{p-1}''$ does not meet $f$,
and this edge can replace $f_{p-1}$ as a representative of $F_{p-1}$, and so on..., until one of $f_1',f_1''$ can represent $F_1$.
\end{proof}

\begin{claim} \hspace{1cm}
\begin{enumerate}
 \item
 An edge $f \in F_p$ contained in $P_1$ must meet both $f_j'$ and $f_j''$ for some $j<p$.
 \item
  There exists at most
one index $j<p$ for which there exists an edge in $F_p \setminus M$ that meets $f_j'$ and $f_j''$.
\item At most $p$ edges of $F_{p}$ are contained in $P_1$ (these can be the $p-1$ edges $f_2, \ldots ,f_p$, plus one
edge connecting the non-$M$ vertices of some $f_j' \cup f_j''$).
\end{enumerate}

\end{claim}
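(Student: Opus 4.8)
First I would fix notation for the block structure of $P_1$. Since the chosen edges $f_i',f_i''$ ($1\le i<p$) are pairwise disjoint, $P_1$ is the disjoint union of the $p-1$ four-element blocks $B_j:=f_j'\cup f_j''$. As $f_j'$ and $f_j''$ are disjoint edges both meeting $f_{j+1}$, the edge $f_{j+1}$ lies inside $B_j$ and uses exactly one endpoint of $f_j'$ and one of $f_j''$; it is the only edge of $M$ meeting $B_j$, since the $f_i',f_i''$ meet no edge of $M$ other than their own $f_{i+1}$. Write $B_j=\{a_j,b_j,c_j,d_j\}$ with $f_j'=\{a_j,b_j\}$, $f_j''=\{c_j,d_j\}$ and $f_{j+1}=\{a_j,c_j\}$; then the only edge of $G$ inside $B_j$ other than $f_j'$, $f_j''$, $f_{j+1}$ is $g_j:=\{b_j,d_j\}$, and $g_j$ meets no edge of $M$. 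Finally note that an edge meeting both $f_j'$ and $f_j''$ (which are disjoint) has one endpoint in each, so it lies inside $B_j$ and equals $f_{j+1}$ or $g_j$.

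For part (1), let $f\in F_p$ with $f\subseteq P_1$, and say its endpoints lie in blocks $B_{j_1}$ and $B_{j_2}$. If $j_1\ne j_2$, or if $j_1=j_2$ but $f\in\{f_{j_1}',f_{j_1}''\}$, I would derive a contradiction by exhibiting a rainbow matching for $F_1,\dots,F_k$, using the rotation from the proof of the preceding claim: represent $F_p$ by $f$, represent each $F_i$ ($1\le i<p$) by whichever of $f_i'$, $f_i''$ is disjoint from $f$ — at least one always is, because in the first case $f$ has at most one endpoint in each block, and in the second case one simply takes the spare of $F_{j_1}$ different from $f$ — and keep $f_i$ for $i>p$. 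The spares are pairwise disjoint and, lying in $P_1$, disjoint from $f_{p+1},\dots,f_k$; the chosen spares avoid $f$ by construction; and $f$ too is disjoint from $f_{p+1},\dots,f_k$. So this is a genuine rainbow matching, contradicting the assumption that $F_1,\dots,F_k$ has none; hence $j_1=j_2=:j$ and $f\in\{f_{j+1},g_j\}$, each of which meets both $f_j'$ and $f_j''$.

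Part (2) is where I expect the real work. By part (1) and the remark above, an edge of $F_p\setminus M$ that meets $f_j'$ and $f_j''$ must equal $g_j$, since $f_{j+1}\in M$. Suppose two such indices $j_1<j_2<p$ exist, so $g_{j_1},g_{j_2}\in F_p$. Again the plan is to build a rainbow matching for $F_1,\dots,F_k$: represent $F_p$ by $g_{j_1}$ (this frees $f_p$, and $g_{j_1}$, being disjoint from $M$, conflicts with nothing in $\{f_2,\dots,f_{p-1},f_{p+1},\dots,f_k\}$), then run the forced insertion rotation $F_1\to f_1'\to F_2\to f_2'\to\cdots$ that brings $F_1$ into the system. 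This rotation goes through smoothly except at block $B_{j_1}$, where both spares of $F_{j_1}$ meet $g_{j_1}$; the second guaranteed edge $g_{j_2}$ is precisely what should let one re-route the rotation through that block and complete it. The crux of the argument — and, I suspect, the hardest point of the whole proof of Theorem~\ref{sevenfourths} — is to verify that two crossing edges of $F_p$ really do let the rotation close: one must track carefully which representative each new edge displaces, check that no $F_i$ gets represented twice, and ensure that every displaced edge and the new representative of $F_p$ stay inside $P_1\cup P_2\cup P_3$, where the counting used in the other claims remains available.

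Part (3) is then a counting wrap-up of (1) and (2): every edge of $F_p$ inside $P_1$ meets both $f_j'$ and $f_j''$ for some $j<p$, hence equals $f_{j+1}$ or $g_j$. The only candidates of the first kind are the $p-1$ edges $f_2,\dots,f_p$, while by part (2) at most one block also has $g_j\in F_p$; as $g_j\ne f_{i+1}$ for every $i$ and distinct blocks give disjoint edges, it follows that $F_p$ has at most $(p-1)+1=p$ edges inside $P_1$, with equality only in the configuration described in the statement.
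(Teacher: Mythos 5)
Your block decomposition $P_1=\bigsqcup_{j<p}B_j$ and the analysis of the four possible edges inside a block are correct and make part (1) and part (3) rigorous; part (1) is indeed the same cascade argument the paper uses for the previous claim.

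The gap is in part (2), and your proposed repair will not close. You want to represent $F_p$ by $g_{j_1}$ and then ``re-route'' the rotation through $B_{j_1}$ using $g_{j_2}$. But $g_{j_2}$ is an edge of $F_p$, and $F_p$ is already represented by $g_{j_1}$; $g_{j_2}$ cannot serve as a representative for $F_{j_1}$, which is the set whose two spares $f_{j_1}',f_{j_1}''$ both hit $g_{j_1}$. Having a second diagonal available does not free the stuck block, because both diagonals live in the same colour class $F_p$. There is no alternating-path argument starting from $g_{j_1}$ alone, and you have no other information about $B_{j_1}$ to substitute.

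The paper's argument for (2) brings in a third ingredient that your proposal never uses: an edge of $F_1$. Since the process ends at $p\le t$, by the earlier relabelling the $M$-edge $f_p$ is incident with some edge $f_1\in F_1'$, i.e.\ an edge of $F_1$ whose unique $M$-vertex lies on $f_p$. Now fix $j$ with $g_j\in F_p\setminus M$. If $f_1$ were disjoint from $g_j$, then $\{f_1,f_2,\dots,f_{p-1},g_j,f_{p+1},\dots,f_k\}$ would be a rainbow matching: $g_j$ avoids $\bigcup M$ entirely, and $f_1$ avoids every listed $M$-edge because its only $M$-vertex sits on $f_p$, which is not in the list. Hence $f_1$ must meet $g_j$, and since the $g_j$'s lie in pairwise disjoint blocks and are made of non-$M$ vertices while $f_1$ has exactly one non-$M$ vertex, this can happen for at most one index $j$. (The paper phrases this as ``$f_1$ meets $f_j'$ or $f_j''$'', which is a looser statement; the precise version is that $f_1$ meets $g_j$.) This is a genuinely different mechanism from a rotation: it is a counting constraint on a single auxiliary edge $f_1$, not a modified alternating path, and it cannot be reconstructed from the data you were working with. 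Once (2) is in place your counting for (3) is fine.
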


\begin{proof}
Part (1) is proved as above - an edge not meeting $f_j'$ and $f_j''$ for any $j<p$ would start an alternating path whose
application would yield a full rainbow matching for $F_1, \ldots ,F_k$.

For the proof of part (2) of the claim, let $f$ be an edge in $F_p
\setminus M$ and let $j<p$ be such that $f$ meets $f_j'$ and
$f_j''$. Recall that by the definition of the choice of the edges
$f_i, i \le p$, we know that there exists
 $f_1 \in F_1$ that meets $f_p$.
  Then $f_1$ meets $f_j'$ or $f_j''$, or else $f_1, f_2, \ldots ,f_{p-1},f,f_{p+1},\ldots,f_k$
  would form a rainbow matching for $F_1, \ldots ,F_k$. Since $f_1$ can meet $f_j' \cup f_j''$ for only one $j$, this proves part (2) of the claim.

Part (3) follows from part (1) and part (2).
\end{proof}

Now we just have to count the number of possible edges in $F_{p}$. Denote by
\begin{itemize}
\item $t_{1}$ the number of edges of $F_{p}$ inside $P_1$.
\item $t_{2}$ the number of edges of $F_{p}$ between $P_1$ and $P_2$,
\item $t_{3}$ the number of edges of $F_{p}$ between $P_1$ and $P_3$,
\item $t_{4}$ the number of edges of $F_{p}$ inside $P_2$,
\item $t_{5}$ the number of edges of $F_{p}$ between $P_2$ and $P_3$,
\item $t_{6}$ the number of edges of $F_{p}$ between $P_2$ and $S$,
\item $t_{7}$ the number of edges of $F_{p}$ inside $P_3$,
\item $t_{8}$ the number of edges of $F_{p}$ between $P_3$ and $S$.
\end{itemize}

We then have the following relations, the first three following from the above claims, and the others from the fact that $F_{p}$ is a matching.
\begin{itemize}
\item $\sum_{i=1}^{8} t_{i}=|F_{p}|=n$
\item $t_{1}\leq p$
\item $t_{6}\leq t-p$
\item $2t_{1}+t_{2}+t_{3}\leq 4(p-1)$
\item $t_{2}+2t_{4}+t_{5}+t_{6}\leq 2(t-p)$
\item $t_{3}+t_{5}+2t_{7}+t_{8}\leq 2(k-t)$
\end{itemize}

Multiplying the second one by 1, the third one by 1, the fourth one by 1, the fifth one by 2, and the sixth one by 3 and adding them all gives :

$$3t_{1}+3t_{2}+4t_{3}+4t_{4}+5t_{5}+3t_{6}+6t_{7}+3t_{8}\leq p+(t-p)+4(p-1)+4(t-p)+6(k-t)=6k-t$$

Now we use $n=\sum t_{i}$ and $t>n-k$ to get the contradiction.
\begin{eqnarray*}
3n &\leq& 6k-t\\
3n &<& 6k-(n-k)\\
 n&<& 7k/4,
\end{eqnarray*}
\endpf


\section{Proof of Theorem \ref{tripartitehalf}}\label{sec:proof2}

Let $H=(V,E)$ be a 3-partite graph, and let $F_{i},~1\le i\le k$ be matchings of size $k$. We have to show that they possess a partial rainbow matching of size $k/2$.
Let $M$ be a maximum rainbow matching. Without loss of generality, assume that  $M=\{f_{1},\ldots,f_{p}\}$, where $f_{i}\in F_{i}$.
Let $i\leq p$ and $j>p$. We say that $f_{i}\in M$ is a good edge for  $F_j$ if there exists two distinct edges $f'_{j}$ and $f''_{j}$ in $F_{j}$ intersecting $f_{i}$ such that $|f'_{j}\cap \bigcup M|=|f''_{j}\cap \bigcup M|=1$.\\

\begin{claim}\label{kminus2p}
For any $j> p$, there are at least $(k-2p)$ good edges for  $F_j$.
\end{claim}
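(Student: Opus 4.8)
The plan is to fix some $j>p$ and to bound the number of good edges for $F_j$ by a two‑way count of the edges of $F_j$ meeting $\bigcup M$ in exactly one vertex. The starting point is the maximality of $M$: since $M$ is a maximum rainbow matching and $F_j$ is not among the matchings it represents, no edge of $F_j$ can be disjoint from $\bigcup M$, for otherwise adjoining it to $M$ would give a larger rainbow matching. Thus every one of the $k$ edges of $F_j$ meets $\bigcup M$; let $a$ be the number of edges of $F_j$ meeting $\bigcup M$ in exactly one vertex and $b$ the number meeting it in at least two vertices, so that $a+b=k$.

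From here I would extract two inequalities. First, the edges of $F_j$ are pairwise disjoint, so the sets $e\cap\bigcup M$ ($e\in F_j$) are pairwise disjoint subsets of $\bigcup M$, and $|\bigcup M|=3p$ since $H$ is $3$-partite; counting vertices gives $a+2b\le 3p$, hence $a\ge 2k-3p$. Second, each edge counted in $a$ meets $\bigcup M$, and hence $M$, in exactly one vertex, so it meets exactly one edge $f_i$ of $M$; writing $d_i$ for the number of these $a$ edges meeting $f_i$, we get $\sum_{i\le p}d_i=a$, while $d_i\le 3$ because the edges counted in $d_i$ are pairwise disjoint and all meet the $3$-vertex edge $f_i$. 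By definition $f_i$ is good for $F_j$ precisely when $d_i\ge 2$; so if $G$ is the number of good edges, the indices with $d_i\le 1$ contribute at most $p-G$ to the sum and the good ones at most $3G$, giving $a=\sum d_i\le 3G+(p-G)=p+2G$.

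Combining the two bounds, $p+2G\ge a\ge 2k-3p$, i.e. $G\ge k-2p$, which is exactly the claim. I do not anticipate a genuine obstacle here, as this is a short double count; the one point requiring care is the appeal to the maximality of $M$, which is what forces \emph{every} edge of $F_j$ to hit $\bigcup M$ and therefore guarantees that the seemingly restrictive condition $|e\cap\bigcup M|=1$ in the definition of a good edge is satisfied by all but at most $3p-k$ edges of $F_j$ — it is this abundance of one‑vertex edges that makes the count effective when $p$ is small.
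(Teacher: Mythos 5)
Your proof is correct. Both arguments hinge on the same maximality observation (every edge of $F_j$ must meet $\bigcup M$, else $M$ could be extended), but the packaging differs: the paper defines a single fractional weight $\phi(f)=\sum_{e\in F_j}\frac{|e\cap f|}{|e\cap\bigcup M|}$ for each $f\in M$, shows $\sum_{f\in M}\phi(f)=k$ exactly, and bounds $\phi(f)\le 3$ in general and $\phi(f)\le 2$ when $f$ is not good, then concludes $k\le 2p+g$ in one step. You instead make the count in two stages: first you bound from below the number $a$ of edges of $F_j$ meeting $\bigcup M$ in exactly one vertex (via $a+2b\le 3p$ and $a+b=k$, giving $a\ge 2k-3p$), and then you distribute those $a$ edges over the $p$ edges of $M$ and observe that a non-good $f_i$ receives at most one while a good one receives at most three, giving $a\le p+2G$. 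The two are quantitatively equivalent — your bound $a\ge 2k-3p$ is precisely what the $\phi$-argument encodes implicitly when it charges an edge with $|e\cap\bigcup M|\ge 2$ only a fraction $\le 1$ — but yours is arguably the more elementary presentation, avoiding the fractional normalization at the cost of one extra counting step; the paper's is more compact and generalizes more cleanly if one wants to play with the uniformity $3$ as a parameter.
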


\begin{proof}
Since $M$ is maximal, every edge in $F_{j}$ is incident to at least one edge in $M$.
For $f\in M$ define
$$\phi(f)=\sum_{e\in F_{j}} \frac{|e \cap f|}{|e \cap \bigcup M|}$$
Clearly, $\phi(f)\leq 3$. In the sum defining $\phi(f)$ there can
occur the fractions
$\frac{1}{1}$,~$\frac{2}{2}$,~$\frac{1}{2}$,~$\frac{1}{3}$,~$\frac{2}{3}$,~$\frac{3}{3}$.
If $f$ is not a good edge then in the sum defining $\phi(f)$ there
can be at most one term $\frac{1}{1}$. Since the sum of the
numerators in the non-zero terms is at most $3$, this implies that
if $f$ is not good then
 $\phi(f)\leq 2$.

Note also that for each edge $e$ in $F_{j}$, we have that
$\sum_{f\in M} |f\cap e|=|e\cap \bigcup M|$. Therefore
$$
k=\sum_{e\in F_{j}}\frac{1}{|e\cap \bigcup M|}\sum_{f\in  M} |f\cap
e| =\sum_{f\in M}\phi(f)\leq 2(p-g)+3g=g+2p
$$
where $g$ denotes the number of good edges, and this gives the desired inequality.
\end{proof}

\begin{claim} \label{no3} No edge in $M$ is  good  for 3 distinct matchings not represented in $M$.
\end{claim}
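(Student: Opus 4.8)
The plan is to argue by contradiction via an augmentation of $M$. Suppose some $f_i\in M$ is good for three distinct matchings $F_a,F_b,F_c$ with $a,b,c>p$. Write the parts of $H$ as $V_1,V_2,V_3$ and the vertices of $f_i$ as $w_1\in V_1,\,w_2\in V_2,\,w_3\in V_3$. For $j>p$ I will call a side $s$ \emph{private for $F_j$} if the (unique) edge of $F_j$ through $w_s$ exists and meets $\bigcup M$ only in $w_s$; denote this edge $e_j^s$. The two edges of $F_j$ witnessing goodness lie in a matching and each meets $\bigcup M$ in a single vertex, so they are disjoint and pass through two different vertices of $f_i$; hence ``$f_i$ is good for $F_j$'' is equivalent to ``at least two sides are private for $F_j$''. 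Thus each of $F_a,F_b,F_c$ determines a set $S_a,S_b,S_c\subseteq\{1,2,3\}$ of private sides, with $|S_a|,|S_b|,|S_c|\ge 2$.

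The key point is that a private edge $e_j^s$ is disjoint from $\bigcup(M\setminus\{f_i\})$. So if I can exhibit two of the three matchings, say $F_j$ and $F_{j'}$, together with private sides $s\in S_j$, $s'\in S_{j'}$ with $s\neq s'$ such that $e_j^s\cap e_{j'}^{s'}=\emptyset$, then $(M\setminus\{f_i\})\cup\{e_j^s,e_{j'}^{s'}\}$ is a rainbow matching of size $p+1$ for the $p+1$ matchings $(\{F_1,\dots,F_p\}\setminus\{F_i\})\cup\{F_j,F_{j'}\}$, contradicting the maximality of $M$. Now $e_j^s$ consists of $w_s$ together with two vertices outside $\bigcup M$, and likewise $e_{j'}^{s'}$ with $w_{s'}$; when $s\neq s'$ these two edges automatically disagree with each other in the parts $V_s$ and $V_{s'}$, so the \emph{only} possible obstruction to disjointness is that they share their vertex in the third part $V_u$, where $u:=\{1,2,3\}\setminus\{s,s'\}$. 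This third--part collision is the heart of the matter, and it is exactly why two matchings do not suffice: two such private edges can genuinely overlap in the remaining coordinate. The role of the third matching is to rule this out.

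To conclude I exhibit a successful augmentation. Since $|S_a|,|S_b|,|S_c|\ge 2$, either (1) two of these sets share two common elements (in particular if one of them equals $\{1,2,3\}$), or (2) $\{S_a,S_b,S_c\}$ consists of the three $2$-element subsets of $\{1,2,3\}$. In case (1), say $\{s,t\}\subseteq S_a\cap S_b$ with $s\neq t$; choose $r\in S_c\cap\{s,t\}$ (nonempty because $|S_c|\ge 2$) and let $q$ be the remaining element of $\{s,t\}$. The three candidate augmentations coming from the pairs $\{(F_a,q),(F_b,r)\}$, $\{(F_a,q),(F_c,r)\}$, $\{(F_b,q),(F_c,r)\}$ all have the \emph{same} third part $V_u$ (with $u\notin\{q,r\}$). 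Were all three to fail, the $V_u$-coordinates of $e_a^q,e_b^q,e_c^r$ would all coincide, forcing $e_b^q$ and $e_b^r$ to have the same vertex in $V_u$ --- impossible, since these are distinct edges of the matching $F_b$ whose $V_u$-vertices both lie outside $\bigcup M$. Case (2), say with $S_a=\{1,2\},\,S_b=\{1,3\},\,S_c=\{2,3\}$, is handled identically: the augmentations from $\{(F_a,1),(F_b,3)\}$, $\{(F_a,1),(F_c,3)\}$, $\{(F_b,1),(F_c,3)\}$ all share the third part $V_2$, and their joint failure would force $e_b^1$ and $e_b^3$ to agree in $V_2$, again impossible.

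The main obstacle, as signalled above, is the third--part collision: a naive ``pick one private edge per matching'' argument breaks down already for two matchings. The work is in choosing, from the three matchings, a triple of candidate augmentations that all share the same third part, so that a simultaneous failure collapses to two edges of one matching meeting in that part --- which a matching forbids. The only remaining bookkeeping is the convention that every edge of $H$ meets each part in exactly one vertex, which is what makes ``outside $\bigcup M$ in $V_u$'' a well-defined, matching--separating condition.
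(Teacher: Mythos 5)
Your proof is correct, and while it rests on the same core augmentation idea as the paper's proof --- replacing $f_i$ by two ``private'' edges that stick out of $\bigcup M$ on different sides of $f_i$, and noting that since the maximality of $M$ blocks this, any two such candidate edges must collide --- the way you organize the contradiction is genuinely different and, I think, cleaner. The paper proceeds by an explicit coordinate chase: it names the vertices $(a_1,b_1,c_1)$ of the good edge, fixes $e^1,f^1$ by a chain of ``without loss of generality'' choices, propagates constraints ($e^2$ must contain $c_3$, then $e^3$ must contain $a_1$ and $c_3$, then $f^2$ must contain $c_1$, \dots) and finally exhibits a forced disjoint pair $e^2,f^3$. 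You instead extract the structural datum: each of the three matchings determines a set $S_j\subseteq\{1,2,3\}$ of private sides with $|S_j|\ge 2$, and you observe explicitly the crucial fact (implicit but never isolated in the paper) that a pair of private edges from distinct sides $s\ne s'$ can only collide in the \emph{third} coordinate $u\notin\{s,s'\}$. This turns the problem into a tiny combinatorial case analysis on the $S_j$'s, and in each case you select three candidate augmenting pairs that all share the same third coordinate, so that simultaneous failure forces two edges of a single matching $F_b$ to agree in $V_u$ --- impossible. Both your case split (two $S_j$'s share two sides, vs.\ the $S_j$'s are the three 2-subsets) and the final pigeonhole check out. What your approach buys is transparency: it avoids the long WLOG chain and makes clear exactly where the ``three'' in the claim is used (two matchings alone cannot rule out a third-coordinate collision, which is precisely why Claim~\ref{kminus2p} pays a factor $2p$ rather than $p$). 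What the paper's approach buys is that it requires no new terminology and stays entirely at the level of concrete vertex names.

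Two very small points worth making explicit if you were to write this up: first, that the augmenting edges $e_j^s,e_{j'}^{s'}$ are automatically distinct because they pass through distinct vertices of $f_i$ and each meets $\bigcup M$ only once; second, in your Case (1) chain of coincidences, you should spell out that $e_b^r$ also carries the common $V_u$-vertex (from the first failing pair), since what you finally contradict is $e_b^q\cap e_b^r\ne\emptyset$, not just the coincidence of $e_a^q,e_b^q,e_c^r$.
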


\begin{proof} Denote by $A,B,C$ the three sides of the hypergraph. In the following
a vertex denoted by $a_{i}$ (resp $b_{i}$,$c_{i}$) will always belong to $A$ (resp. $B$,$C$).
Moreover  $a_{i}$ and $a_{j}$ for distinct $i$ and $j$ will always denote distinct vertices of the graph.
Assume by contradiction that such an edge $e$ exists. Its vertices are $(a_{1},b_{1},c_{1})$, and
it is a good edge for three distinct indices $j_{1},j_{2},j_{3}$.

Therefore for $s=1,2,3$, there exist in $F_{j_{s}}$ two edges
$e^{s}$ and $f^{s}$ meeting $e$ in $exactly$ one vertex. Note that
for $s\neq s'$ it is not possible that $e^{s}$ and $e^{s'}$ are
disjoint. Indeed in that case one could replace $e$ in the matching
by these two edges, contradicting the maximality of $M$. Of course
this is also true for $e^{s}$ and $f^{s'}$, so that amongst these 6
edges all pairs intersect except for the pairs $(e^{s},f^{s})$
(which are disjoint since they come from the same matching).

Without loss of generality we can assume that
$e^{1}=(a_{1},b_{2},c_{2})$ and $f^{1}=(a_{2},b_{1},c_{3})$. Again
without loss of generality we can assume that $e^{2}$ meets $e$ in
$a_{1}$. Since $e^2$ has to intersect $f^{1}$, and since $e^2$ does
not contain $b_{1}$, this implies that $e^{2}$ contains $c_{3}$.

Consider now $e^{3}$ and $f^{3}$ : if one of these edges contains
$b_{1}$, since it cannot contain $a_{1}$, it will fail to meet both
$e^{1}$ and $e^{2}$. Without loss of generality we can therefore
assume that $e^{3}$ contains $a_{1}$ and $f^{3}$ contains $c_{1}$.
But then as before, since $e^{3}$ meets $f^{1}$, it has to contain
$c_{3}$. But now $f^{2}$ is subject to the same constraints as
$e^{3}$ and $f^{3}$ just before, it cannot contain $b_{1}$ or else
it will fail to intersect $e^{1}$ and $e^{3}$. Hence $f^{2}$
contains $c_{1}$.

Now we have that $e^{2}$ and $e^{3}$ both contains $a_{1}$ and $c_{3}$ and $f^{2}$ and $f^{3}$ both contain $c_{1}$. But since $f^{2}$ and $f^{3}$ must intersect $e^{1}$ it implies that both need to contain $b_{2}$.  But now we get a contradiction because $e^{2}$ cannot contain $b_{2}$ and since $f^{3}$ cannot contain $a_{1}$, these two edges do not meet.

\end{proof}

By Claims \ref{kminus2p} and \ref{no3} we have:

$$2p \ge \sum_{j>p}|\{e \in M: e~ \text{is good for}~ F_j\}|\ge (k-p)(k-2p)$$

namely
$$2p^{2}-(3k+2)p+k^{2}\leq 0$$
which in turn implies that $p$ is larger than the smallest root of the quadratic expression:

$$p\geq \frac{3k+2-\sqrt{(3k+2)^{2}-8k^{2}}}{4} > \frac{3k+2-\sqrt{(k+6)^{2}}}{4} =\frac{k}{2}-1 $$
\endpf



\section{Putting the main conjecture in context}\label{conjectures}

\subsection{An observation and three offshoots}
Hypergraph matching theory abounds with conjectures and is meager with results.
 In such a field  putting order to the conjectures is of value. The aim of this section is to  place Conjecture \ref{fullhalf} in a general setting, and relate it to other conjectures, some known and some new.
While Conjecture \ref{fullhalf} generalizes the Brualdi-Ryser-Stein conjecture, its most natural background is probably the following observation, proved by a greedy  argument:

\begin{observation}\label{trivial}
Any set  $\cf=(F_1,\ldots ,F_k)$  of independent sets in a matroid $\cm$, where $|F_i|=k$ for all $i$, has an $\cm$-SR.
\end{observation}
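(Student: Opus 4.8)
The plan is to give a one-pass greedy construction that uses nothing beyond the matroid augmentation axiom. Recall that an $\cm$-SR for $\cf$ is a choice $f_i\in F_i$ such that the range $\{f_1,\ldots,f_k\}$ lies in $\ci(\cm)$, i.e.\ is independent in $\cm$.

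First I would process the sets $F_1,\ldots,F_k$ in order, maintaining the invariant that after the $j$-th step we have selected \emph{distinct} elements $f_1\in F_1,\ldots,f_j\in F_j$ whose union $I_j:=\{f_1,\ldots,f_j\}$ is independent in $\cm$. The base case is $I_0=\emptyset$. For the inductive step, assume $I_j$ has been built with $0\le j<k$. Then $I_j$ is independent with $|I_j|=j$, while $F_{j+1}$ is independent with $|F_{j+1}|=k>j$. By the independence augmentation axiom (exchange property) of matroids, applied to the pair $I_j\subsetneq$ ``larger independent set'' $F_{j+1}$, there is an element $e\in F_{j+1}\setminus I_j$ with $I_j\cup\{e\}$ independent; put $f_{j+1}:=e$ and $I_{j+1}:=I_j\cup\{e\}$. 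This preserves the invariant (in particular $e\notin I_j$ keeps the chosen elements distinct). After $k$ steps, $\{f_1,\ldots,f_k\}=I_k$ is an independent set with $f_i\in F_i$ for each $i$, which is the desired $\cm$-SR.

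There is essentially no obstacle: the single point that needs checking is that the augmentation axiom is applicable at \emph{every} step, and this holds precisely because $|F_i|=k$ is strictly larger than the size $j\le k-1$ of the partial representative set available at that step. (The statement would fail with $|F_i|=k-1$, and it also fails when ``independent in a matroid'' is replaced by ``a matching'', since a matching is not in general the family of independent sets of a matroid — this is exactly why Conjecture \ref{fullhalf} asks for matchings of size $k+1$, Example \ref{standard} showing that size $k$ does not suffice.)
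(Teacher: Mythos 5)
Your proof is correct and is exactly the greedy argument the paper alludes to (but does not spell out): iteratively extend the partial independent representative set using the matroid augmentation axiom, which applies at step $j+1$ because $|I_j|=j<k=|F_{j+1}|$. Nothing to add.
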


 As often happens, when a fact is true for a very simple reason, it can be strengthened, and acquire depth by the addition of  other  ingredients.
 In this particular case, we are aware of three possible such ingredients:

 \begin{enumerate}
 \item Adding another matroid, namely replacing $\cm$ by the intersection of two matroids.

 \item Decomposability, meaning requiring the existence of ``many'' $\cm$-SR's, in the sense that $\bigcup \cf$ is the union of $k$ $\cm$-SR's.

     \item A ``scrambled'' version, obtained by scrambling the $F_i$'s, resulting
      in another family of $k$ sets of size $k$, for which an $\cm$-SR is sought. More generally, we can consider a general family of sets $F_1, \ldots ,F_m$, where $m$ is arbitrary, and the assumption that all $F_i$ are in $\cm$  can be replaced by an assumption that $\bigcup_{i \le m}F_i$, taken as a multiset, can be decomposed into  $k$ independent sets. (In the ``scrambled'' version $m=k$.)

        \end{enumerate}

The effect of adding each of these ingredients is different. Case  (1) of Observation \ref{trivial} is the subject of Conjecture \ref{conj:main}. So, the observation as is becomes false, needing strengthening of its condition in order to be possibly true. In the case of (2) Observation \ref{trivial} becomes a famous conjecture of Rota. And when adding ingredient (3) it still remains true.

But then things can become even more complicated, when two of the ingredients are  added together, or even all three.

\subsection{Adding another matroid}

Adding another matroid renders Observation \ref{trivial} false. As noted, a special case is that of rainbow matchings in bipartite graphs, and as we know a price of $1$ has to be paid there, namely the matchings have to be of size $k+1$. Here is the general, matroidal conjecture:

\begin{conjecture}\label{matroidalfullhalf}
Let $\cm$ and $\cn$ be two matroids on the same vertex set. If $F_1,\ldots ,F_k$ are sets of size $k+1$  belonging to $\cm \cap \cn$, then they have an $\cm \cap \cn$-SR. \end{conjecture}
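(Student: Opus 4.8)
Two routes suggest themselves, and I would pursue both in parallel; the first is topological, the second combinatorial.

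The first route is the topological (``deficiency Hall'') method. Recall the sufficient condition of Aharoni--Haxell, extended to arbitrary simplicial complexes by Meshulam and by Aharoni--Berger: for a complex $\cc$ on a ground set $V$ and sets $F_1,\dots,F_k\subseteq V$, a $\cc$-SR exists provided $\eta\bigl(\cc[\,\bigcup_{i\in I}F_i\,]\bigr)\ge |I|$ for every $I\subseteq[k]$, where $\cc[W]$ is the subcomplex induced on $W$ and $\eta$ is homological connectivity, normalised so that $\eta(\cc)\ge m$ once $\tilde H_{m-2}$ of every induced subcomplex vanishes. Taking $\cc=\ci(\cm)\cap\ci(\cn)$, the complex of common independent sets of $\cm$ and $\cn$, this reduces Conjecture \ref{matroidalfullhalf} to the estimate: \emph{if $W$ is a union of at most $k$ common independent sets, each of size $k+1$, then $\eta\bigl(\ci(\cm)\cap\ci(\cn)[W]\bigr)\ge k$} --- equivalently, that this complex is connected up to within one of its top dimension $\nu(W)-1\ge k$. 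For a single matroid the analogue is immediate, since $\eta(\ci(\cm)[W])=r_\cm(W)\ge k+1$; the point of the conjecture is that the second matroid must be ``paid for'' by the single extra element carried by each $F_i$. I expect this estimate to be the principal obstacle: for an arbitrary ground set it is false --- matching complexes of bipartite graphs, which are exactly complexes of common independent sets of two partition matroids, are in general far less connected than their dimension --- so one must genuinely use that $W$ is covered by only $k$ common independent sets, and it is conceivable that the plain condition above fails even when the conjecture holds. In that case one would need a ``cooperative'' strengthening of the topological Hall theorem whose hypothesis on $\bigcup_{i\in I}F_i$ is allowed to be weaker because each individual $F_i$ is a common independent set of size $k+1$.

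The second route is combinatorial, in the spirit of Section \ref{sec:proof1}, and is the one I would push hardest. Induct on $k$: deleting one element from each $F_i$ and applying the inductive hypothesis to $F_2,\dots,F_k$ produces a rainbow common independent set $M=\{f_2,\dots,f_k\}$ of size $k-1$, and we must reconfigure it to absorb a representative of $F_1$. Since $F_1\cap\mathrm{cl}_\cm(M)$ and $F_1\cap\mathrm{cl}_\cn(M)$ each have size at most $k-1$ while $|F_1|=k+1$, at least two elements of $F_1$ extend $M$ inside $\cm$ and at least two extend it inside $\cn$; if some element does both we are done, so assume these two pairs are disjoint. One then runs a matroid-intersection alternating argument --- following exchange arcs $x\mapsto y$ with $M-x+y\in\cm$ and $y\mapsto x$ with $M-x+y\in\cn$ --- seeking a walk from an element of $F_1$ to an $\cm$- or $\cn$-augmenting element whose application reshuffles the representatives along it into a rainbow common independent set of size $k$. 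The argument of Section \ref{sec:proof1} is precisely this specialised to partition matroids: ``edge $e$ meets edge $f$'' becomes ``$e\in\mathrm{cl}_\cm(f)$ or $e\in\mathrm{cl}_\cn(f)$'', the cardinality bounds $2(p-1)$, $2(t-p)$, $2(k-t)$ become rank inequalities, and I expect the whole argument to port, giving sets of size $\tfrac74 k$ in $\cm\cap\cn$ as an immediate milestone; similarly the fractional $\phi$-charge computation of Section \ref{sec:proof2} should port to three matroids.

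Reaching $k+1$ is where new ideas are required, and the decisive difficulty is the one that also keeps Conjecture \ref{conj:main} open: a matroid-intersection alternating structure has no a priori length bound --- in contrast to the single exchange behind Rado's theorem and Observation \ref{trivial} --- so the ``double blocking'' count that yields $\tfrac74 k$ leaves a constant-factor gap. My plan for closing it would be to charge against \emph{both} matroids at once, combining the alternating-path bookkeeping of Section \ref{sec:proof1} with the fractional $\phi$-counting of Section \ref{sec:proof2}, so that an element of $F_1$ blocked all along a long alternating walk is forced, via its ranks in $\cm$ \emph{and} in $\cn$, to certify enough structure inside $\bigcup M$ to exhaust all $k+1$ of its elements and produce a contradiction. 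I do not anticipate a short argument; the sensible order of attack is (i) the matroidal $\tfrac74 k$ bound, (ii) a matroidal analogue of Woolbright's deficiency bound $g(2,k)\ge k-\sqrt k$, and (iii) the full $k+1$ bound, contingent on a genuine handle on the length of rainbow matroid-intersection augmenting structures.
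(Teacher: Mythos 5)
The statement you are addressing is Conjecture \ref{matroidalfullhalf}; the paper offers no proof of it --- it is stated as an open matroidal generalization of Conjecture \ref{conj:main}, which is itself open (the best bound actually proved in the paper being Theorem \ref{sevenfourths}, i.e.\ $f(2,k)\le \frac{7}{4}k$, the special case of two partition matroids). So there is no proof in the paper to compare yours to, and, as you yourself acknowledge, what you have written is a research program rather than a proof: both of your routes terminate at explicitly admitted obstacles (the connectivity estimate for the common-independence complex on the topological route, and the absence of any length control on matroid-intersection alternating structures on the combinatorial route), and neither route is carried out even to its claimed intermediate milestones.

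Two concrete cautions about those milestones. On the topological route, the Hall-type condition for general complexes needs connectivity bounds that, as you note, already fail for matching complexes of bipartite graphs --- precisely the two-partition-matroid case --- and the paper's own Theorem \ref{thm:mcapnsr}, which comes from exactly this machinery via \cite{matcomp}, only reaches sets of size $2k$; getting down to $k+1$ this way would require a genuinely new ``cooperative'' strengthening, not a refinement of the existing condition. On the combinatorial route, the claim that the argument of Section \ref{sec:proof1} ``ports'' to two arbitrary matroids to give a $\frac{7}{4}k$ bound is not automatic: that proof exploits features special to partition matroids --- each edge has exactly two endpoints, hence meets $\bigcup M$ in at most two vertices and can be blocked in only a bounded number of ways, and the ground set splits into $P_1,P_2,P_3,S$ with the explicit size bounds $4(p-1)$, $2(t-p)$, $2(k-t)$ that drive the final count --- whereas in a general matroid an element can lie in the closure of $M$ through arbitrarily long circuits, and the analogue of ``meets exactly one vertex of $\bigcup M$'' has no rank-theoretic counterpart with comparable counting power. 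Until one of these steps is actually executed, the conjecture remains exactly as open as the paper leaves it.
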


\subsection{Decomposition problems}
Adding a requirement for decomposability into rainbow matchings results in a conjecture of Rota:

\begin{conjecture}\label{rota}
Given a set  $\cf=(F_1,\ldots ,F_k)$  of  independent sets in a matroid $\cm$, where $|F_i|=k$ for all $i$,  the multiset union $\bigcup \cf$ can be decomposed into $k$ $\cm$-SR's.
\end{conjecture}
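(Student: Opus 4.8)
The plan is to attack Conjecture~\ref{rota} by reducing it, in stages, to a purely combinatorial statement about arrays and then attempting an inductive or algebraic resolution; I should say at the outset that this is Rota's basis conjecture, so what follows is a strategy rather than a complete argument, and the ``main obstacle'' below is genuinely that the statement is open.

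\textbf{Reduction to a coordinatized problem.} First I would restrict $\cm$ to the submatroid spanned by $\bigcup\cf$, which has rank at most $k$, so each $F_i$ is in fact a \emph{basis}. One then tries to reduce to matroids representable over a field — the extremal behaviour is expected to occur there, and one may even hope to reduce further to $U_{k,k}$-type or direct-sum-of-small-matroid instances. In the representable case the conjecture becomes: the $k^2$ vectors can be arranged in a $k\times k$ array with $F_i$ as row $i$ so that every column is also a basis. This Latin-square-flavoured statement is the form I would work with.

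\textbf{Building the array.} The natural approach is to fill the array column by column, maintaining the invariant that after $j$ columns the $k-j$ vectors remaining in each row still span the ambient space. For $j=0$ this is the hypothesis, and the existence of a single good column whenever every row spans the space is exactly Observation~\ref{trivial} (or, in the graphic case, a greedy/augmenting argument). The trouble is that after removing the first column the residual family need not satisfy the hypothesis of Observation~\ref{trivial}: a row that was a basis becomes a spanning set of size $k-1$ only if we were lucky about which vector left it. An alternative is to take a full rainbow matching of a ``scrambled'' version (which exists by ingredient~(3) of the discussion after Observation~\ref{trivial}), delete it, and recurse — but again the deleted configuration is uncontrolled, so the induction does not close.

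\textbf{Algebraic certificate, and the obstacle.} For $k$ even, Glynn's reformulation reduces the conjecture to the Alon--Tarsi conjecture on the imbalance between even and odd Latin squares of order $k$; here I would attempt a Combinatorial Nullstellensatz argument, engineering the matroid so that a suitable determinantal polynomial has a unique leading monomial. The honest obstacle is structural and is shared with Theorems~\ref{sevenfourths} and~\ref{tripartitehalf}: every exchange or greedy tool available produces \emph{one} $\cm$-SR at a time, whereas the conjecture asks for $k$ of them to coexist as a partition, and fixing one SR tends to wreck the others. Consequently the best unconditional progress one can currently expect is a partial decomposition into $\Theta(\sqrt{k})$ SRs, and breaking past a constant fraction — or establishing Alon--Tarsi itself — is the crux of the matter.
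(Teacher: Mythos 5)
This is Conjecture~\ref{rota} in the paper — Rota's basis conjecture — which the authors state \emph{as a conjecture} and do not prove; there is no proof in the paper for you to match. You correctly recognize this at the outset, and your write-up is an honest survey of strategies (restriction to the rank-$k$ case, the column-by-column greedy and why it does not close, the Glynn/Alon--Tarsi reduction for even $k$) rather than a proof, which is the appropriate response. Your observation about the structural obstacle — that the greedy and exchange tools underlying Observation~\ref{trivial} and Theorems~\ref{sevenfourths},~\ref{tripartitehalf} each extract a single $\cm$-SR and give no control over the residue — accurately identifies why the decomposition version is so much harder than the single-SR statement, and it is consistent with the paper's framing of Rota's conjecture as what you get when you add the ``decomposability'' ingredient to an otherwise trivial fact. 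No gap to report beyond the one you already flag: the statement is open, and nothing here (nor in the paper) resolves it.
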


\subsection{Scrambling}
 Scrambling the elements of the sets $F_i$, namely re-distributing them among the members of another family of $k$ sets, each member being of size $k$, retains the validity of Observation
 \ref{trivial}.
 In fact, the number of sets $F_i$ in $\cf=(F_1, \ldots ,F_m)$ can be arbitrary, in which case the ``scrambling'' terminology is no longer appropriate, and should be replaced by  the condition  that $\bigcup_{i \le m}F_i$ is decomposable into $k$ independent sets. To refer to this situation, we shall use the following terminology:

\begin{definition}
The {\em chromatic number} of a hypergraph $\cc$, denoted by $\chi(\cc)$, is the minimal number of edges of $\cc$ needed to cover $V(\cc)$.
\end{definition}

This parameter is also sometimes denoted in the literature by  $\rho(\cc)$. The name ``chromatic number'' comes from the fact that when $\cc$ is the complex of independent sets in a graph $G$, it is just the chromatic number of $G$. The fractional counterpart $\chi^*(\cc)$ is the minimal sum of weights on edges from $\cc$, such that every vertex belongs to edges whose sum of weights is at least $1$.

In this terminology, the modified version of Observation \ref{trivial} is:

         \begin{theorem}
         If $\cf=(F_1,\ldots ,F_m)$  is a set of (not necessarily independent) sets of size $k$ in a matroid $\cm$, and if $\chi(\cm)\le k$ (meaning that $\bigcup \cf$ is the union of $k$ independent sets), then $\cf$ has an $\cm$-SR. \end{theorem}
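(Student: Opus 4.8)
The plan is to deduce the statement from Rado's theorem \cite{rado}. Write $r$ for the rank function of $\cm$. Rado's theorem asserts that a family $(F_1,\dots,F_m)$ of subsets of the ground set of $\cm$ admits a transversal with distinct representatives whose underlying set is independent in $\cm$ --- in particular an $\cm$-SR --- if and only if $r\bigl(\bigcup_{i\in I}F_i\bigr)\ge|I|$ for every $I\subseteq[m]$. So the whole task is to verify this rank condition from the hypothesis that the multiset $\bigcup\cf$ is covered by $k$ independent sets $J_1,\dots,J_k$ of $\cm$; that is, every ground-set element $e$ belongs to at least as many of the $J_j$ as it has copies in $\bigcup\cf$.

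Next I would fix $I\subseteq[m]$, set $U=\bigcup_{i\in I}F_i$, put $\rho=r(U)$, and let $L$ be the flat spanned by $U$, so $r(L)=\rho$. For a ground-set element $e$ put $d(e)=|\{i\in I:e\in F_i\}|$; counting incidences between the sets $F_i$ ($i\in I$) and the ground set in two ways gives $\sum_e d(e)=\sum_{i\in I}|F_i|=|I|k$, since $|F_i|=k$ for every $i$. Every $e\in U$ occurs at least $d(e)$ times in the multiset $\bigcup\cf$, hence belongs to at least $d(e)$ of the sets $J_j$, and moreover lies in $L$. Therefore
\[
|I|k=\sum_{e\in U}d(e)\le\sum_{e\in U}|\{j:e\in J_j\}|=\sum_{j=1}^{k}|J_j\cap U|\le\sum_{j=1}^{k}|J_j\cap L|\le k\rho,
\]
the last inequality because $J_j\cap L$ is an independent set contained in a flat of rank $\rho$, so $|J_j\cap L|\le\rho$. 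Dividing by $k$ gives $|I|\le\rho=r(U)$, which is exactly Rado's condition, and Rado's theorem then produces the desired $\cm$-SR.

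Since the argument is this short, no step involves a serious computation; the one thing to handle with care is the passage between the multiset $\bigcup\cf$ and the honest sets $J_1,\dots,J_k$ covering it, and specifically the inequality $d(e)\le|\{j:e\in J_j\}|$, which is what turns the crude count $|I|k$ into the bound $k\rho$. It does not matter that $\bigcup\cf$ is only assumed to be covered by, rather than partitioned into, the $J_j$, as only the cover inequality is used. Finally, the statement does recover Observation \ref{trivial}: when each $F_i$ is independent and $m=k$, one may take $J_i=F_i$, so $\chi(\cm)\le k$ holds on $\bigcup\cf$ automatically.
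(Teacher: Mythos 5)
Your proof is correct and follows exactly the route the paper indicates: the paper's entire proof is the one-line remark ``This follows directly from Rado's theorem,'' and you have supplied the missing verification of Rado's rank condition by the standard double count of $\sum_{e\in U} d(e)$ against $\sum_j |J_j\cap U|$. The only minor streamlining possible is that passing to the flat $L$ is unnecessary, since $|J_j\cap U|\le r(U)=\rho$ directly because $J_j\cap U$ is an independent subset of $U$.
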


This follows directly from Rado's theorem \cite{rado}.

\subsection{Adding a matroid, and at the same time bounding the chromatic number}

Here is a theorem that explains the  title of this subsection:

\begin{theorem}\label{thm:mcapnsr}
If $\cm,~\cn$ are matroids on the same vertex set satisfying $\chi(\cm \cap \cn) \le k$, and $\cf=(F_1, \ldots ,F_m)$ is a family of sets belonging to $\cm \cap \cn$,  all of size $2k$, then $\cf$ has a $\cm \cap \cn$-SR.

\end{theorem}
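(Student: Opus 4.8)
The plan is to reduce Theorem~\ref{thm:mcapnsr} to Conjecture~\ref{matroidalfullhalf}'s \emph{proven} companion fact --- namely, the easy direction furnished by the rainbow-matching machinery in Section~\ref{sec:proof1} --- but since we cannot assume that conjecture, the cleaner route is a direct application of the Aharoni--Berger type deficiency version of Rado's theorem for the intersection of two matroids. Concretely, I would first recall the known sufficient condition for a family $\cf = (F_1,\ldots,F_m)$ with all $F_i \in \cm\cap\cn$ to admit an $\cm\cap\cn$-SR: by a ``defect'' form of the two-matroid intersection theorem (the topological connectivity bound of Aharoni--Berger, or equivalently the result that the line-graph-style independence complex of $\cm\cap\cn$ has high connectivity), it suffices that for every sub-family $\cf' \subseteq \cf$ one has a lower bound on the rank, in $\cm$ and in $\cn$ simultaneously, of $\bigcup\cf'$ in terms of $|\cf'|$. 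The hypothesis $\chi(\cm\cap\cn)\le k$ is exactly what we will convert into such a rank bound, and $|F_i| = 2k$ is the slack that makes the arithmetic close.

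The key steps, in order: (1) State precisely the two-matroid SR criterion we invoke --- I would use the formulation that if $\cf=(F_1,\ldots,F_m)$ with $F_i\in\cm\cap\cn$, and if for every $\cf'\subseteq\cf$ the set $U=\bigcup\cf'$ satisfies $\operatorname{rk}_\cm(U) + \operatorname{rk}_\cn(U) \ge |U| + |\cf'|$ (a ``double Hall'' condition), then an $\cm\cap\cn$-SR exists; this is a standard consequence of matroid intersection / the Aharoni--Berger topological bound. (2) Verify the condition. Fix $\cf'$ with $|\cf'|=j$ and let $U=\bigcup\cf'$. Each $F_i\in\cf'$ is a set of size $2k$ lying in $\cm\cap\cn$, so it is independent in $\cm$; hence $\operatorname{rk}_\cm(U)\ge 2k$, and likewise $\operatorname{rk}_\cn(U)\ge 2k$. (3) Bound $|U|$ from above: here is where $\chi(\cm\cap\cn)\le k$ enters. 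Since $\bigcup\cf$ (hence $U$) is covered by $k$ sets independent in $\cm\cap\cn$, and each such set has size at most $\operatorname{rk}_\cm(U)$ --- no, more carefully, each of the $k$ covering independent sets restricted to $U$ has $\cm$-rank at most $\operatorname{rk}_\cm(U)$ and is itself independent, so has size $\le \operatorname{rk}_\cm(U)$; thus $|U|\le k\cdot\operatorname{rk}_\cm(U)$, and symmetrically $|U|\le k\cdot\operatorname{rk}_\cn(U)$. (4) Combine: we need $\operatorname{rk}_\cm(U)+\operatorname{rk}_\cn(U)\ge |U|+j$. Writing $r_1=\operatorname{rk}_\cm(U), r_2=\operatorname{rk}_\cn(U)$ we have $r_1,r_2\ge 2k$, $r_1,r_2\ge |U|/k$, and $j\le m$. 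The inequality $r_1+r_2\ge |U|+j$ will follow once we also bound $j$ or $|U|$ appropriately --- note that picking $j$ of the $F_i$'s gives $|U|\le 2kj$ trivially but more usefully $|U|\le k\min(r_1,r_2)$, so $|U|+j \le k\min(r_1,r_2) + j$; since $\min(r_1,r_2)\ge 2k$ this is at most... I will arrange the constants so that $r_1+r_2 \ge 2\cdot\frac{|U|}{k}\cdot\frac{k}{?}$ --- the clean line is $r_1 + r_2 \ge \frac{|U|}{k} + \frac{|U|}{k}$ only gives $2|U|/k$, which beats $|U|$ precisely when $k\le 2$, so instead I should use the \emph{stronger} consequence that a size-$2k$ independent set of $\cm\cap\cn$ present among the $F_i$ already forces $|U|$ to be small relative to the ranks via the cover, combined with $r_1,r_2\ge 2k$ giving $r_1+r_2\ge 4k \ge |U|/k \cdot (\text{something}) + j$ when $m$ is not too large.

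The main obstacle --- and the step I expect to require real care --- is step~(4): making the counting inequality tight enough. The factor-$2k$ size of the $F_i$ versus chromatic number $k$ suggests the intended calculation is roughly ``each covering class contributes $\le$ rank, there are $k$ of them, and the rank is $\ge 2k$, so there is a factor-$2$ surplus that absorbs the $+j$ (Hall-deficiency) term.'' I would therefore organize the proof around showing, for each $\cf'$, that $\operatorname{rk}_\cm(\bigcup\cf') \ge |\bigcup\cf'| \cdot \tfrac{1}{k} + (\text{correction})$ and symmetrically, then averaging the two matroid inequalities; if a direct deficiency version is not available in a form I can cite cleanly, the fallback is to run the greedy/augmenting-path argument of Observation~\ref{trivial} for one matroid while using the $\chi(\cm\cap\cn)\le k$ hypothesis to re-color and stay inside $\cn$ as well --- essentially an alternating-path argument in the intersection, mirroring the structure of the proof of Theorem~\ref{sevenfourths}. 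In either case the heart of the matter is the bookkeeping that turns ``covered by $k$ independent sets of a rank-$\ge 2k$ object'' into a usable lower bound on ranks of subfamilies.
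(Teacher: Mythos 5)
Your proposal rests on an unproved and, I believe, false premise: the ``double Hall'' criterion you state in Step~(1), namely that an $\cm\cap\cn$-SR exists whenever every subfamily $\cf'\subseteq\cf$ with $U=\bigcup\cf'$ satisfies $\operatorname{rk}_\cm(U)+\operatorname{rk}_\cn(U)\ge|U|+|\cf'|$. No such rank-arithmetic sufficient condition for SRs in the intersection of two matroids is known; finding an $\cm\cap\cn$-SR is equivalent to a \emph{three}-matroid intersection problem (adding the partition matroid determined by the $F_i$), which has no clean Hall/rank certificate, and the criterion does not follow from Edmonds' min--max for two matroids (for instance, taking $A=U$ in Edmonds' formula gives only $\operatorname{rk}_\cm(U)$, which your inequality does not control). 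Calling it a ``standard consequence of the Aharoni--Berger topological bound'' conflates two genuinely different things: the topological Hall condition requires $\eta\bigl(\cm\cap\cn|_{U}\bigr)\ge|\cf'|$, a connectivity statement, not a sum-of-ranks inequality.

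Even if one grants the criterion, the counting in Step~(4) does not close, as you yourself note: from $|U|\le k\,\operatorname{rk}_\cm(U)$ and $|U|\le k\,\operatorname{rk}_\cn(U)$ one only gets $\operatorname{rk}_\cm(U)+\operatorname{rk}_\cn(U)\ge 2|U|/k$, which is $\ge|U|+j$ only when $k\le 2$; and replacing this with $\operatorname{rk}_\cm(U),\operatorname{rk}_\cn(U)\ge 2k$ together with $|U|\le k\min(r_1,r_2)$ requires $4k\ge 2k^2+j$, which fails already for $k\ge 2$. So the ``factor-$2$ surplus'' you hope for is not there. The paper does not prove the theorem at all --- it cites \cite{matcomp}, whose machinery is a lower bound on the topological connectivity $\eta(\cm\cap\cn)$ followed by the topological Hall theorem (compare Theorems~\ref{abm} and~\ref{abmcorollary}, which are the partition-matroid specialization via $\nu^*$). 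If you want a self-contained proof, the route is to establish a connectivity bound of the form $\eta\bigl(\cm\cap\cn|_U\bigr)\ge\bigl\lceil|U|/(2\chi(\cm\cap\cn))\bigr\rceil$ or its rank analogue, then verify $\eta\ge|\cf'|$ using $|F_i|=2k$; a Hall-deficiency argument in the style you outline would need that topological input replaced by something, and a linear rank inequality cannot play that role. Your sketched fallback via augmenting paths (mirroring Section~\ref{sec:proof1}) is not developed enough to assess, but note that the proof of Theorem~\ref{sevenfourths} relies heavily on the partition-matroid/bipartite structure and does not obviously lift to arbitrary pairs of matroids.
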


This is a corollary of results from \cite{matcomp}. We shall not prove it here, and instead relate to the case
 in which $\cm$ and $\cn$ are partition matroids, so that $\cm \cap \cn$ is the complex of matchings in a bipartite graph $G$. By K\"onig's edge coloring theorem, the condition $\chi(\cm \cap \cn) \le k$ is equivalent to $\Delta(G) \le k$. Theorem \ref{thm:mcapnsr} is then a special case of the following theorem:

\begin{theorem}\label{abm}
Let  $\cf=\{F_1,\ldots ,F_m\}$ be a set  of   $q$-uniform hypergraphs on the same vertex set. If $|F_i| \ge
q\Delta(\bigcup_{i \le m}F_i)$ for all $i$ (here the union is taken as a multiset, namely degrees are counted with multiplicity), then $\cf$ has a rainbow matching.
\end{theorem}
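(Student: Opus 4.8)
The plan is to prove the statement by induction on $m$, following the topological/combinatorial template already used in Section \ref{sec:proof1}, but with the counting argument adapted to $q$-uniform hypergraphs and to the degree bound $|F_i|\ge q\Delta$, where I write $\Delta=\Delta(\bigcup_{i\le m}F_i)$. The base case $m=0$ is vacuous. For the inductive step, assume by contradiction that $\cf=\{F_1,\ldots,F_m\}$ has no rainbow matching, while (by the inductive hypothesis) $F_2,\ldots,F_m$ do have a rainbow matching $M=\{f_2,\ldots,f_m\}$ with $f_i\in F_i$. Since $\cf$ has no rainbow matching, every edge of $F_1$ meets $\bigcup M$. The first key point is a counting estimate: because each vertex of $\bigcup M$ has degree at most $\Delta$ in $\bigcup_{i}F_i$, and $\bigcup M$ has $q(m-1)$ vertices, the edges of $F_1$ meeting $\bigcup M$ are ``distributed'' over the edges of $M$ with controlled multiplicity, and since $|F_1|\ge q\Delta$ there must be an edge of $M$, say $f_2$, that is met by ``many'' edges of $F_1$ — enough to start an augmenting process.

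Next I would run the same greedy augmentation chain as in the proof of Theorem \ref{sevenfourths}: having found that $f_2$ is hit by several edges of $F_1$, I look for edges of $F_2$ that are hit by several edges of $F_1$ only through one further edge $f_{i_3}$ of $M$, and so on, building a chain $f_2,f_3,\ldots,f_p$ together with ``alternative'' edges at each level, none of which meets any previously used vertex except the designated one. The chain terminates at some index $p$, and at that point no further local augmentation is possible; the key structural consequence — proved exactly as Claims 2.1–2.3 in Section \ref{sec:proof1}, by the observation that any ``escaping'' edge would close an augmenting path and produce a full rainbow matching — is that the edges of $F_p$ are severely constrained: essentially they must live inside the set $P_1$ of vertices already used in the chain, or be accounted for by the bounded-degree condition on the $f_i$ outside the chain. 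I would then set up the analogue of the eight-parameter count $t_1,\ldots,t_8$ (now with the appropriate factors of $q$ replacing the factors of $2$, and with $\Delta$ replacing the role of $k$ in the degree bounds), take the correct positive linear combination of the structural inequalities, and derive $|F_p|<q\Delta$, contradicting the hypothesis.

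The main obstacle, I expect, is getting the counting in the augmentation step to close with the exact constant $q\Delta$ rather than something weaker like $2q\Delta$ (a greedy bound of that type is immediate). The subtlety is that when $q>2$ a single edge of $\bigcup F_i$ can meet up to $q$ edges of $M$, so the ``averaging'' that in the bipartite case gave $2(n-k+1)>k$ must here be replaced by a weighted count — the natural device is the potential function $\phi(f)=\sum_{e}|e\cap f|/|e\cap\bigcup M|$ used in Claim \ref{kminus2p}, which correctly apportions each edge of $F_1$ among the edges of $M$ it meets. Combined with the degree bound $|e\cap\bigcup M|\le$ (number of relevant vertices) and $\sum_f\phi(f)\le$ (something like $q(m-1)\Delta/|F_1|\cdot\ldots$), this should yield an edge of $M$ hit with the right multiplicity. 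Once the augmentation step delivers the right quantitative start, the termination analysis and the final linear-combination count are routine generalizations of Section \ref{sec:proof1}, so the heart of the proof is precisely the passage from the greedy factor $2$ to the sharp factor $1$ via the fractional counting.
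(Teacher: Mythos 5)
Your proposal is a sketch rather than a proof: the decisive counting step (``getting the exact constant $q\Delta$ rather than $2q\Delta$'') is the very thing you leave open, with ``I expect'' and ``should yield'' standing in for the argument. There is no reason to believe the plan closes. The augmenting-chain machinery of Section~\ref{sec:proof1} is precisely what yields only $f(2,k)\le\frac{7}{4}k$ rather than the conjectured $k+1$, and the fractional potential $\phi$ of Claim~\ref{kminus2p} yields only $g(3,k)\ge k/2$ rather than $k-1$; in both places the elementary technique falls a multiplicative factor short of tight, so asserting that some combination of the two will here deliver the sharp factor~$1$ is unjustified. Moreover the termination analysis you gesture at (``the analogue of the eight-parameter count $t_1,\ldots,t_8$ with $q$ replacing $2$'') is not routine: already for $q=2$ the eight-parameter count only reaches $7k/4$, and for general $q$ a single edge can meet up to $q$ edges of $M$, which degrades rather than improves the constants in those inequalities.

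The paper's actual proof goes an entirely different and much shorter way: it simply invokes Theorem~\ref{abmcorollary}, which is a black-box topological result from~\cite{abm} requiring $\nu^*(\bigcup_{i\in I}F_i)>q(|I|-1)$ for every $I\subseteq[m]$. To check this condition under your hypothesis, one observes that assigning every edge of $\bigcup_{i\in I}F_i$ the constant weight $1/\Delta(\bigcup_{i\in I}F_i)$ is a legitimate fractional matching (each vertex has degree at most $\Delta$), and its total weight is
$$\frac{\sum_{i\in I}|F_i|}{\Delta\bigl(\bigcup_{i\in I}F_i\bigr)}\ \ge\ \frac{|I|\,q\,\Delta\bigl(\bigcup_{i\le m}F_i\bigr)}{\Delta\bigl(\bigcup_{i\in I}F_i\bigr)}\ \ge\ q|I|\ >\ q(|I|-1),$$
so the hypothesis of Theorem~\ref{abmcorollary} holds and the rainbow matching exists. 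In other words, the sharp constant $q$ does not come from a refined augmenting-path count at all; it comes for free once one has the fractional Hall-type theorem, and that theorem is proved by homological methods in~\cite{abm}, not by the elementary machinery of Sections~\ref{sec:proof1} and~\ref{sec:proof2}. If you want an elementary self-contained proof of Theorem~\ref{abm} you would need a genuinely new idea, not a transplant of those arguments.
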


This is an immediate corollary of:
\begin{theorem}\label{abmcorollary}\cite{abm}
Let  $\cf=\{F_1,\ldots ,F_m\}$ be a set  of   $q$-uniform hypergraphs on the same vertex set. If
$\nu^*(\bigcup_{i \in I}F_i) >q(|I|-1)$ for each $I \subseteq [m]$ then $\cf$ has a rainbow matching.
\end{theorem}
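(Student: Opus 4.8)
The plan is to derive this from the topological version of Hall's theorem, together with a connectivity estimate for matching complexes. A rainbow matching for $\cf=(F_1,\ldots,F_m)$ is nothing but a $\mathcal{C}$-SR, where $\mathcal{C}=\mathcal{M}(\bigcup_{j\le m}F_j)$ is the matching complex of the union --- equivalently the independence complex $\mathcal{I}\big(L(\bigcup_{j\le m}F_j)\big)$ of its line graph --- and each $F_i$ is viewed as a set of vertices of $\mathcal{C}$. By the topological Hall theorem (Aharoni--Haxell, in the sharpened form due to Meshulam), such an SR exists provided $\eta\big(\mathcal{C}[\bigcup_{i\in I}F_i]\big)\ge|I|$ for every $I\subseteq[m]$, where $\eta$ denotes topological (or homological) connectivity. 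The induced subcomplex of $\mathcal{C}$ on $\bigcup_{i\in I}F_i$ is exactly $\mathcal{M}(\bigcup_{i\in I}F_i)$, so everything reduces to the following bound for $q$-uniform hypergraphs:
$$\eta\big(\mathcal{M}(\mathcal{H})\big)\ \ge\ \Big\lceil \frac{\nu^*(\mathcal{H})}{q}\Big\rceil.$$
Granting it, $\nu^*(\bigcup_{i\in I}F_i)>q(|I|-1)$ forces $\nu^*/q>|I|-1$, hence $\lceil\nu^*/q\rceil\ge|I|$, which is precisely the hypothesis of topological Hall (the case $I=\emptyset$ being vacuous). This last arithmetic is what pins down the constant $q$ in the statement.

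I would prove the displayed bound by induction on $|E(\mathcal{H})|$, using the standard inequality $\eta(\mathcal{C})\ge\min\{\eta(\mathcal{C}\setminus v),\,\eta(\mathrm{lk}_{\mathcal{C}}(v))+1\}$ for a vertex $v$ of a complex $\mathcal{C}$ (proved by Mayer--Vietoris: write $\mathcal{C}$ as the union of $\mathcal{C}\setminus v$ and the contractible closed star of $v$, whose intersection is $\mathrm{lk}_{\mathcal{C}}(v)$). Apply it with $\mathcal{C}=\mathcal{M}(\mathcal{H})$ and $v$ an edge $e$ of $\mathcal{H}$; then $\mathcal{C}\setminus e=\mathcal{M}(\mathcal{H}-e)$ and $\mathrm{lk}_{\mathcal{C}}(e)=\mathcal{M}(\mathcal{H}_e)$, where $\mathcal{H}_e$ is obtained from $\mathcal{H}$ by removing $e$ and every edge meeting $e$. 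The crucial observation is that passing to the link costs at most $q$ in fractional matching number: restricting an optimal fractional matching of $\mathcal{H}$ to $\mathcal{H}_e$ throws away at most the total weight incident with the $q$ vertices of $e$, which is at most $q$, so $\nu^*(\mathcal{H}_e)\ge\nu^*(\mathcal{H})-q$ and by induction
$$\eta\big(\mathrm{lk}_{\mathcal{C}}(e)\big)+1\ \ge\ \Big\lceil\frac{\nu^*(\mathcal{H})-q}{q}\Big\rceil+1\ =\ \Big\lceil\frac{\nu^*(\mathcal{H})}{q}\Big\rceil.$$
For the deletion term: if some optimal fractional matching is not of full support, delete an edge $e$ outside its support, so $\nu^*(\mathcal{H}-e)=\nu^*(\mathcal{H})$ and induction gives $\eta(\mathcal{M}(\mathcal{H}-e))\ge\lceil\nu^*(\mathcal{H})/q\rceil$; combined with the link term, the induction closes.

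The hard part is the remaining case of this connectivity bound, when every edge of $\mathcal{H}$ lies in the support of every optimal fractional matching (the prototype being an odd cycle for $q=2$, where $\eta=\lceil\nu^*/2\rceil$ is attained). There, removing any single edge can drop $\nu^*$ below the next multiple of $q$, and hence drop $\lceil\nu^*/q\rceil$ by one, which the one-step deletion recursion above cannot recover. Handling it is the technical core: one needs a smarter choice of which vertex or edge to remove, or a strengthened inductive hypothesis, exploiting the rigidity forced by the tight odd components that LP duality attaches to the condition ``every edge essential''. By contrast, the reduction through topological Hall and the link estimate above are routine, and the $-q$ loss in the link is exactly what makes the constant $q$ in the theorem the right one.
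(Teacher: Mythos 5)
The paper does not actually contain a proof of Theorem~\ref{abmcorollary}: it is quoted as a black box from \cite{abm} (Aharoni--Berger--Meshulam), and is only \emph{used} here to derive Theorem~\ref{abm}. So there is no in-paper proof to compare against, and what you have written is an attempt to reprove the cited result.

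Your reduction is the right one, and it is indeed the framework used in \cite{abm}: view a rainbow matching as a $\mathcal{C}$-SR for $\mathcal{C}=\mathcal{M}(\bigcup F_i)=\mathcal{I}(L(\bigcup F_i))$, invoke the Aharoni--Haxell/Meshulam topological Hall theorem, and reduce everything to the single connectivity estimate $\eta(\mathcal{M}(\mathcal{H}))\ge\lceil\nu^*(\mathcal{H})/q\rceil$. Your arithmetic deriving $\lceil\nu^*/q\rceil\ge|I|$ from $\nu^*>q(|I|-1)$ is correct, and so are the identifications $\mathcal{C}\setminus e=\mathcal{M}(\mathcal{H}-e)$, $\mathrm{lk}_{\mathcal{C}}(e)=\mathcal{M}(\mathcal{H}_e)$, and the loss estimate $\nu^*(\mathcal{H}_e)\ge\nu^*(\mathcal{H})-q$.

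The gap you flag, however, is genuine, and it is not a corner case to be patched --- it is the whole theorem. The Mayer--Vietoris inequality $\eta(\mathcal{C})\ge\min\{\eta(\mathcal{C}\setminus v),\eta(\mathrm{lk}_{\mathcal{C}}(v))+1\}$ requires \emph{both} branches to meet the target, and the deletion branch fails precisely when removing any edge lowers $\lceil\nu^*/q\rceil$. Moreover your inductive hypothesis is too weak to recover from this: for $\mathcal{H}=C_5$, $q=2$, the target is $\eta\ge2$, but deleting any edge gives $P_5$ with $\nu^*(P_5)=2$, so the inductive hypothesis only yields $\eta(\mathcal{M}(P_5))\ge1$. (In fact $\mathcal{M}(P_5)$ is contractible, so $\eta=\infty$ and the deletion branch is fine in reality --- but the induction cannot see this, because it only carries $\lceil\nu^*/q\rceil$, which is not monotone enough under deletion.) By symmetry of $C_5$ no smarter choice of $e$ helps. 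So the one-step deletion/link recursion with the stated inductive invariant does not close, and you would need either a genuinely stronger invariant or a different method. The cited source \cite{abm} does the latter: it establishes the connectivity lower bound via spectral estimates on higher Laplacians of flag/independence complexes (as its title suggests), not by a deletion--contraction induction. As it stands, your argument correctly reduces the theorem to a known hard lemma and correctly identifies where the easy induction breaks, but it does not prove the lemma.
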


If the condition of Theorem \ref{abm} holds, then the constant fractional matching $f(e)=\frac{1}{\Delta(\bigcup_{i \in I}F_i)}$ is of total weight
at least $q|I|$, and hence the condition of Theorem \ref{abmcorollary} holds.

Theorem \ref{abm} is tight even when $F_i$ are $q$-partite, but in the only example we know that shows tightness the hypergraphs $F_i$ are multihypergraphs, meaning that they contain repeated edges. The following example is taken from \cite{absz}:

\begin{example}\label{example:absz} For $i=1, \ldots ,k$ let $F_i$ be a matching $M_i$ of size $q$, repeated $k$ times (each edge of $M_i$ is of size $q$). Let $F_{k+1}$ consist of
$k$ matchings $N_i$, each of size $q$, such that each edge in $N_i$ meets each edge in $M_i$. Then $|F_i|=kq$ for all $i \le k+1$, the degree of every vertex
in $\bigcup F_i$ is $kq+1$, and there is no rainbow matching.
\end{example}

It is of interest to understand whether the repeated edges are essential for this example.

Write $c(r)$ for the minimal number
 for which there exists a number $d(r)$, such that any  $r$-partite hypergraph with sides $V_1,V_2, \ldots ,V_r$ satisfying
$\delta(V_1) \ge c(r) {\Delta(V_2 \cup \ldots \cup V_r)+d(r)}$ has a matching covering $V_1$. By Theorem \ref{abm}
$c(r) \le r-1$.
For $r$ such that there exists a projective plane of edge size $r-1$ it is possible to show that $c(r) \ge r-2$.

\begin{conjecture}
$c(r)=r-2$ for all $r>2$.
\end{conjecture}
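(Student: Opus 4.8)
The plan is to prove the two inequalities $c(r)\ge r-2$ and $c(r)\le r-2$ separately, the first being a construction and the second the real content. For the lower bound — which the sentence preceding the conjecture already advertises as feasible when a projective plane of order $r-2$ exists — I would make the cited construction explicit. Starting from a projective plane $\Pi$ of order $n=r-2$, whose lines carry $n+1=r-1$ points and whose points lie on $r-1$ lines, I would build an $r$-partite hypergraph in which $V_1$ is indexed by the lines of $\Pi$, and the edges through a line-vertex $\ell$ encode the $r-1$ points incident with $\ell$, distributed one into each of $V_2,\dots,V_r$ via fixed orderings. This forces $\delta(V_1)$ up to roughly $(r-2)\Delta(V_2\cup\cdots\cup V_r)$, while any two line-vertices share a point of $\Pi$ and hence cannot be matched simultaneously, so no matching covers $V_1$. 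This generalizes Lemma \ref{twor-1} and Example \ref{example:absz}, and the bookkeeping is routine once the incidence structure is fixed.

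For the upper bound $c(r)\le r-2$ I would first recast it as a rainbow matching statement. For each $v\in V_1$, let $F_v$ be the link of $v$ — the edges through $v$ with $v$ deleted — which is an $(r-1)$-uniform, indeed $(r-1)$-partite, hypergraph on $V_2\cup\cdots\cup V_r$; a matching covering $V_1$ is exactly a rainbow matching for $(F_v)_{v\in V_1}$, and $\Delta(\bigcup_v F_v)\le\Delta(V_2\cup\cdots\cup V_r)$, so Theorem \ref{abm} with $q=r-1$ already yields $c(r)\le r-1$ (with additive error $0$). To shave the last unit, the $F_v$ are not arbitrary: they are links in a genuinely $(r-1)$-partite host, and the hope is that this partiteness lets one replace the threshold $q(|I|-1)$ in Theorem \ref{abmcorollary} by $(q-1)(|I|-1)$. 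Concretely I would run an Aharoni–Haxell topological-connectivity argument on the independence complex of the line graph of $\bigcup_v F_v$, exploiting that each edge meets each of the $r-1$ sides to improve the standard connectivity estimate by one; an alternative is an inductive augmenting-path argument in the spirit of the proofs of Theorems \ref{sevenfourths} and \ref{tripartitehalf}, peeling off one side $V_j$ at a time.

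A complementary line is induction on $r$: pick a vertex $u\in V_r$, split the edges into those through $u$ and those avoiding $u$, match $u$ first (it has ample choices since $\delta(V_1)$ is large), and reduce to an $(r-1)$-partite instance on $V_1,\dots,V_{r-1}$, with the multiplicative constant dropping from $r-2$ to $r-3$ and the additive term $d(r)$ absorbing the loss incurred at each step. Before attempting the general case I would settle small $r$, in particular $c(3)=1$, which already demands a Haxell-type deficiency argument and would serve both as the induction base and as a test of the method.

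The step I expect to be the main obstacle is precisely the passage from fractional to integral matchings with a \emph{tight} multiplicative constant: the projective-plane examples show $r-2$ cannot be improved, so any valid proof must be essentially lossless, whereas the general-purpose fractional-to-integral tools (including Theorem \ref{abmcorollary}) are off by exactly the unit we are trying to recover. Encoding the $(r-1)$-partite structure inside the topological connectivity bounds — or finding a combinatorial augmentation that never wastes a coordinate — is the crux, and is plausibly of the same difficulty as Conjecture \ref{fullhalf} itself. A secondary concern is ensuring that the additive term $d(r)$ produced by the argument genuinely depends only on $r$ rather than growing with the instance.
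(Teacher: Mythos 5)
The statement you were asked to prove is posed in the paper as an \emph{open conjecture}, and the paper itself offers no proof. The surrounding text establishes only the weaker upper bound $c(r)\le r-1$ (via Theorem~\ref{abm}) and asserts, without details, that $c(r)\ge r-2$ whenever a projective plane of order $r-2$ exists. Your write-up correctly recognizes this state of affairs and is, in substance, a research plan rather than a proof; you should not present it as a completed argument.

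On the lower bound: your projective-plane sketch is in the same spirit as the paper's remark, but the step ``any two line-vertices share a point of $\Pi$ and hence cannot be matched simultaneously'' does not follow for free. Two edges coming from lines $\ell,\ell'$ through the same point $x$ block each other only if the orderings you fix place $x$ in the \emph{same} side $V_j$ for both $\ell$ and $\ell'$; with arbitrary orderings this fails, and with a single global ordering the degree bookkeeping changes. You also have not verified that the resulting $\delta(V_1)/\Delta(V_2\cup\cdots\cup V_r)$ ratio actually reaches $r-2$ asymptotically. This is fixable but is genuinely not ``routine,'' and as stated your construction has a gap.

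On the upper bound: you candidly admit that Theorem~\ref{abm} gives only $c(r)\le r-1$ and that the improvement to $r-2$ is the real content. The suggested improvement of the threshold in Theorem~\ref{abmcorollary} from $q(|I|-1)$ to $(q-1)(|I|-1)$ under $q$-partiteness is pure speculation with no argument supplied, and the alternatives you list (a sharper topological connectivity bound exploiting $(r-1)$-partiteness, an inductive augmenting-path scheme, or induction on $r$) are directions, not proofs. Your own closing assessment — that this step is ``plausibly of the same difficulty as Conjecture~\ref{fullhalf} itself'' — is accurate: that is precisely why the paper states this as a conjecture. Even the base case $c(3)=1$, which you flag as a sanity check, is not established in your write-up. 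In short, there is no error of approach, but there is also no proof here; the central inequality $c(r)\le r-2$ remains entirely open in your proposal, exactly as it does in the paper.
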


\subsection{Combining all three ingredients}

Let us return to Theorem \ref{abmcorollary}, and re-formulate it in terms of so called {\em bipartite hypergraphs}. In a bipartite hypergraph there is a {\em special side}, call it $S$, such that every edge intersects $S$ at precisely one vertex.  Let $r=q+1$, and form
a bipartite $r$-uniform hypergraph by assigning a vertex $v_i \in M$ to each set $F_i$ and   forming an $r$-tuple   $(v_i,e)$
for every edge $e \in F_i$. In this terminology Theorem \ref{abmcorollary} is:

\begin{theorem}\label{thm:bipartite}
If in a bipartite $r$-uniform hypergraph and special side $S$ it is true that $deg(u)\ge (r-1)deg(v)$ for every $u \in S$ and $v \in V \setminus S$, then there exists a matching of $S$ (namely, covering $S$).

\end{theorem}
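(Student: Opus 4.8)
The plan is to observe that Theorem \ref{thm:bipartite} is nothing more than Theorem \ref{abm} (hence, by the implication noted just above it, Theorem \ref{abmcorollary}) transported through the correspondence between bipartite $r$-uniform hypergraphs and families of $(r-1)$-uniform hypergraphs. So the ``proof'' is really just the construction of this dictionary, run in the direction opposite to the one used above to motivate the bipartite reformulation.

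First I would set up the dictionary. Given a bipartite $r$-uniform hypergraph $\mathcal H$ with special side $S$, put $q=r-1$ and, for each $u\in S$, let $F_u$ be the $q$-uniform hypergraph on $V(\mathcal H)\setminus S$ whose edges are the sets $e\setminus\{u\}$ for $e$ an edge of $\mathcal H$ containing $u$ (these are genuine $q$-sets because every edge of $\mathcal H$ meets $S$ in exactly one vertex). Two bookkeeping facts: $|F_u|=\deg_{\mathcal H}(u)$ for every $u\in S$; and, taking $\bigcup_{u\in S}F_u$ as a \emph{multiset}, $\deg_{\bigcup F_u}(v)=\deg_{\mathcal H}(v)$ for every $v\notin S$, so that $\Delta\big(\bigcup_{u\in S}F_u\big)=\Delta_{\mathcal H}(V\setminus S)$.

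Now translate the hypothesis. The condition $\deg(u)\ge(r-1)\deg(v)$ for all $u\in S$, $v\notin S$ says precisely $\min_{u\in S}|F_u|\ge q\,\Delta_{\mathcal H}(V\setminus S)=q\,\Delta\big(\bigcup_{u\in S}F_u\big)$, i.e. $|F_u|\ge q\,\Delta\big(\bigcup F_u\big)$ for every $u$. Theorem \ref{abm} then hands us a rainbow matching for the family $(F_u)_{u\in S}$: pairwise disjoint $e_u\in F_u$, one for each $u\in S$. Re-attaching the deleted vertices, the edges $\{u\}\cup e_u$ of $\mathcal H$ are pairwise disjoint --- disjoint inside $V\setminus S$ because the $e_u$ are, and disjoint on $S$ because the $u$'s are distinct --- and together they cover $S$. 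Conversely, any matching of $\mathcal H$ covering $S$ visibly yields a rainbow matching of $(F_u)_{u\in S}$, so the two statements are literally interchangeable. (The vacuous case in which some $u\in S$ lies in no edge --- which by the displayed inequality forces $\mathcal H$ to be edgeless --- is disposed of by reading the hypothesis with the usual convention that each vertex of $S$ lies in at least one edge.)

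Since the argument is a pure change of language, there is no real obstacle: the only points requiring care are exactly the two flagged above --- that degrees in $\bigcup F_u$ must be counted with multiplicity for the identity $\Delta\big(\bigcup F_u\big)=\Delta_{\mathcal H}(V\setminus S)$ to be correct, and the trivial isolated-vertex degeneracy --- while all the genuine content is already packed into Theorem \ref{abmcorollary}, which we are entitled to invoke.
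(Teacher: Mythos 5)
Your proposal is correct and matches the paper's intent: the paper introduces Theorem~\ref{thm:bipartite} precisely as the bipartite-hypergraph reformulation of the rainbow-matching result, and you have simply made the dictionary explicit and run it in the reverse direction. The bookkeeping is right: $|F_u|=\deg_{\mathcal H}(u)$, $\deg_{\bigcup F_u}(v)=\deg_{\mathcal H}(v)$ (since each edge through $v$ lands in exactly one $F_u$, as it meets $S$ in one vertex), and the hypothesis $\deg(u)\ge(r-1)\deg(v)$ for all $u\in S$, $v\notin S$ is exactly $|F_u|\ge q\,\Delta\bigl(\bigcup F_u\bigr)$. One small point in your favor worth noting: you correctly identify the translated statement as Theorem~\ref{abm} rather than Theorem~\ref{abmcorollary} --- the condition in Theorem~\ref{thm:bipartite} is the degree condition, not the fractional-matching condition --- whereas the paper's phrasing (``In this terminology Theorem~\ref{abmcorollary} is:'') is loose on this point; Theorem~\ref{abm} is what transports, and Theorem~\ref{abm} is in turn a corollary of Theorem~\ref{abmcorollary}.
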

It may well be that the conclusion can be strengthened, to the effect that there is a partition of $E(H)$ into $\Delta(H)$ such matchings:

\begin{conjecture}\cite{abw}\label{conj:decomposingtoisrs}

Let $H$ be a hypergraph satisfying the conditions of Theorem \ref{thm:bipartite}. Then $\chi'(H)=\Delta(H)$.
\end{conjecture}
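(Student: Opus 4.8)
The plan is to follow the strategy that worked for Theorem~\ref{thm:bipartite} itself, but upgrade it to produce a whole edge coloring rather than a single matching of the special side. Recall that Theorem~\ref{thm:bipartite} is a reformulation of the fractional statement in Theorem~\ref{abmcorollary}: in a bipartite $r$-uniform hypergraph $H$ with special side $S$, if $\deg(u)\ge (r-1)\deg(v)$ for every $u\in S$, $v\in V\setminus S$, then $S$ has a matching. The conjecture asks that in fact $\chi'(H)=\Delta(H)$, i.e. $E(H)$ decomposes into $\Delta(H)$ matchings, each of which (by the degree bound on $S$-vertices being dominated by $\Delta(H)\ge$ each $\deg(u)$) may as well be required only to be a matching of $S$. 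So the heart of the matter is: one cannot merely find \emph{a} cover of $S$, one must find $\Delta(H)$ pairwise disjoint covers of $S$ whose union is all of $E(H)$, which forces $\deg(u)=\Delta(H)$ for every $u\in S$ — so really the first reduction is to the case where $S$ is regular of degree $\Delta:=\Delta(H)$, by adding dummy edges through low-degree $S$-vertices and pendant vertices on the other side; this does not destroy the degree hypothesis since it only raises $S$-degrees, all of which must reach $\Delta$ anyway.

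The main approach I would try is the standard one for edge-coloring conjectures of this flavor: reduce to a fractional statement plus an integrality / rounding argument, or alternatively use a direct ``greedy deletion'' induction. For the fractional route: the hypothesis $\deg(u)\ge(r-1)\deg(v)$ is exactly the hypothesis under which Theorem~\ref{abmcorollary} gives, for every sub-hypergraph obtained by deleting an arbitrary set of edges while keeping the degree inequality, a matching of (what remains of) $S$. One then wants to iterate: peel off one matching of $S$, check the degree condition is preserved, and recurse, accumulating $\Delta$ matchings. The obstacle is that deleting an arbitrary matching of $S$ need \emph{not} preserve the inequality $\deg_S\ge(r-1)\deg_{V\setminus S}$: a single matching drops every $u\in S$ by exactly $1$ but may leave some outside vertex $v$ untouched, so the ratio can deteriorate. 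The fix has to be to choose the matching carefully — a matching of $S$ that also hits every currently-maximum-degree vertex outside $S$, or more robustly, to set up an LP/flow relaxation of the edge-coloring and show its optimum is $\Delta$, then invoke an integrality result. Concretely I would formulate fractional edge coloring $\chi'^*(H)$, prove $\chi'^*(H)=\Delta(H)$ by producing, via Theorem~\ref{abmcorollary} applied to all ``scaled'' sub-hypergraphs, a fractional matching of $S$ of total weight $1$ through each vertex simultaneously, and then argue that the polytope of $S$-covering matchings is integral enough (as in bipartite-type settings, using that $\cm\cap\cn$ on the $S$-side behaves like a transversal structure) to round to an honest decomposition into $\Delta$ matchings.

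The steps, in order, would be: (1) reduce to $S$-regular $H$ of degree $\Delta$ by padding; (2) prove the fractional version $\chi'^*(H)=\Delta$, the engine being Theorem~\ref{abmcorollary} / Theorem~\ref{abm} applied not to $H$ itself but to families of sub-hypergraphs to certify that every edge lies in a perfect fractional $S$-matching, so that a convex combination of $S$-covering matchings uses each edge with total coefficient $1/\Delta$; (3) prove an integrality statement for $S$-covering matchings in bipartite $r$-uniform hypergraphs satisfying the degree bound — this is where I'd expect to need either a defect-version of Theorem~\ref{thm:bipartite} applied repeatedly with a cleverly chosen matching at each stage, or a direct argument that the relevant fractional matching polytope (for covering $S$) has a $1/\Delta$-integral structure under the hypothesis; (4) assemble: the $\Delta$ integral $S$-covering matchings from (3), chosen to partition $E(H)$, give $\chi'(H)\le\Delta$, and $\chi'(H)\ge\Delta$ is trivial.

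I expect step~(3) — turning the fractional decomposition into an integral one — to be the main obstacle, and in fact the reason this is only a conjecture: it is essentially asking for an analogue of the (still open in general) Goldberg--Seymour-type or Rota-type integrality phenomenon in a hypergraph setting where the matroid intersection $\cm\cap\cn$ governing the $S$-side is not of a kind known to have the integral decomposition property. A safer but weaker fallback, which I would present if the full conjecture resists, is to prove $\chi'(H)\le\Delta(H)+o(\Delta(H))$, or $\chi'(H)\le(1+\varepsilon)\Delta(H)$, by an absorbing/nibble argument: iteratively remove random-ish $S$-covering matchings supplied by Theorem~\ref{thm:bipartite} (each removal degrades the ratio by a controllable amount because $S$-degrees and outside-degrees shrink roughly proportionally in expectation), track concentration, and finish the last $o(\Delta)$ edges greedily or by a small-case version of the theorem. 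This gives the ``morally correct'' statement even if the exact equality $\chi'(H)=\Delta(H)$ must be left open.
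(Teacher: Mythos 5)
This statement is a \emph{conjecture} in the paper, attributed to the work-in-progress \cite{abw}; the paper does not prove it, and indeed immediately after stating it the authors record only two partial results: the case $|S|=2$, and a ``half'' version valid under the stronger hypothesis $\deg(u)\ge 2(r-1)\deg(v)$. So there is no proof in the paper to compare yours against, and rightly so: what you have written is a research plan, not a proof, and you say as much yourself when you identify step~(3) as ``the main obstacle'' and ``the reason this is only a conjecture.'' That self-assessment is correct, and it means the submission does not establish the statement.

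Beyond the acknowledged gap at step~(3), there is a second, unacknowledged gap at step~(2). You assert that the fractional statement $\chi'^*(H)=\Delta(H)$ can be extracted from Theorem~\ref{abmcorollary} by ``producing, via Theorem~\ref{abmcorollary} applied to all scaled sub-hypergraphs, a fractional matching of $S$ of total weight $1$ through each vertex simultaneously.'' But Theorem~\ref{abmcorollary} produces a single rainbow matching (equivalently, one $S$-covering matching) under a fractional-matching-number hypothesis on sub-families; it does not, as stated, give a fractional decomposition of $E(H)$ into $S$-covering matchings of total weight $\Delta$. For graphs one gets $\chi'^*=\max(\Delta,\Gamma)$ from Edmonds' matching polytope, but no analogous integrality description of the $S$-matching polytope of an $r$-uniform bipartite hypergraph is available here, and you do not supply an averaging or LP-duality argument to close this. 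So even the fractional version of the conjecture is not established by your step~(2), which means step~(3) is not the only missing ingredient. The $(1+\varepsilon)\Delta$ nibble fallback you sketch at the end is a sensible direction but is likewise only sketched, and in any case would prove a strictly weaker statement than the conjectured equality $\chi'(H)=\Delta(H)$.
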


Here $\chi'(H)$ denotes the edge chromatic number of $H$. For $r=3$ this is a generalization of a conjecture of Hilton \cite{hilton}:

\begin{conjecture}

 An $n \times 2n$ Latin rectangle can  be decomposed into $2n$ transversals.
  \end{conjecture}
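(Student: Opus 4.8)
The plan is to deduce this from Conjecture \ref{conj:decomposingtoisrs} (equivalently, from the $r=3$ case of that conjecture applied to the appropriate bipartite $3$-uniform hypergraph), exactly in the spirit of how the Brualdi--Stein conjecture was shown earlier to follow from Conjecture \ref{emptyhalf}. First I would encode an $n \times 2n$ Latin rectangle $L$ as a $3$-partite $3$-uniform hypergraph $H_L$ with sides $R$ (the $n$ rows), $C$ (the $2n$ columns), and $S$ (the $2n$ symbols), where each filled cell $(i,j)$ carrying symbol $s$ becomes the edge $\{r_i, c_j, s\}$. A transversal of the Latin rectangle in the usual sense — a set of $n$ cells, one per row, all in distinct columns, all carrying distinct symbols — is precisely a perfect matching of $R$ in $H_L$, i.e.\ a matching covering every row vertex. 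So a decomposition of $L$ into $2n$ transversals is the same as a proper edge coloring of $H_L$ with $2n$ colors in which every color class covers $R$.

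The key step is then to check that $H_L$, or rather the bipartite hypergraph obtained from it by taking $S$ (or $C$) as the special side, satisfies the degree hypothesis of Theorem \ref{thm:bipartite} and hence (conjecturally) of Conjecture \ref{conj:decomposingtoisrs}. In $H_L$ each row vertex has degree $2n$ (every row of the rectangle is full), each column vertex has degree $n$, and each symbol vertex has degree $n$ (each symbol appears once per row, in $n$ of the $2n$ columns). Thus $\Delta(H_L) = 2n$, attained exactly on the rows; and for the special side one wants the small-degree side. Taking the special side to be, say, the rows is not directly the setup of Theorem \ref{thm:bipartite} (there the special side is the \emph{large}-degree side $S$ with $\deg(u) \ge (r-1)\deg(v) = 2\deg(v)$); instead one should split the rectangle's row structure so that rows play the role of $S$: since $\deg(\text{row}) = 2n = 2 \cdot n = 2\deg(\text{column}) = 2\deg(\text{symbol})$, the inequality $\deg(u) \ge (r-1)\deg(v)$ holds with $r=3$, with equality. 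So $H_L$ (with rows as the special side) is exactly a tight instance of Conjecture \ref{conj:decomposingtoisrs}, and that conjecture yields $\chi'(H_L) = \Delta(H_L) = 2n$, i.e.\ a partition of the edges into $2n$ proper matchings. Each such matching, being a matching in $H_L$ that together with the others partitions all $2n^2$ edges, must cover $R$ (an easy counting argument: $2n$ matchings in $H_L$, each of size at most $n$, partitioning $2n^2$ edges, forces each to have size exactly $n$ and hence cover $R$), i.e.\ it is a transversal. This gives the desired decomposition.

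The main obstacle is, of course, that this reduction only establishes the Hilton conjecture \emph{conditionally}, on Conjecture \ref{conj:decomposingtoisrs}, which is itself open; so the honest statement of what is being proved is the implication ``Conjecture \ref{conj:decomposingtoisrs} $\Rightarrow$ Hilton's conjecture,'' and the real content is the verification that the Latin-rectangle hypergraph is an instance of the general conjecture. A secondary subtlety to get right is the bookkeeping for which side is ``special'': one must confirm that the degree inequality points the correct way ($\deg(\text{row}) = 2\,\deg(\text{column})$, not the reverse), which is precisely why the rectangle is $n \times 2n$ rather than square — for a square Latin square the inequality would fail and indeed full decomposition into transversals is false, matching the parity obstruction in Ryser's conjecture. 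No genuinely hard combinatorial argument is needed beyond this encoding and the trivial counting that each color class is forced to be a full transversal.
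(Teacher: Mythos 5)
Your proposal is correct and matches what the paper is doing: the statement is explicitly a conjecture (Hilton's), not a theorem, and the paper offers no proof, only the remark that for $r=3$ Conjecture \ref{conj:decomposingtoisrs} generalizes it. Your reduction carefully verifies that implication — encoding the $n\times 2n$ rectangle as a bipartite $3$-uniform hypergraph with rows as the special side, checking $\deg(\text{row})=2n=(r-1)\deg(\text{column})=(r-1)\deg(\text{symbol})$ so the hypothesis of Theorem \ref{thm:bipartite} holds (tightly), and then using the count $2n^2 = 2n\cdot n$ to force each of the $2n$ color classes produced by $\chi'(H)=\Delta(H)$ to be a full transversal — which is exactly the content the paper leaves implicit.
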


  In \cite{hagg-joh} Hilton's conjecture was proved for $n \times (1+o(1))n$ Latin rectangles.

In \cite{abw} Conjecture \ref{conj:decomposingtoisrs} was proved for $|S|=2$. Another result there was half of what is required: the conjecture is true under the stronger condition  $deg(u)\ge 2(r-1) deg(v)$ for every $u \in S$ and $v \in V \setminus S$.

\subsection{Scrambling and decomposing together - a scrambled Rota conjecture}

What happens in Rota's conjecture if we first scramble the elements? That is, if the sets $F_i,~i=1, \ldots,k$ are not necessarily bases, but $\bigcup_{i \le k}F_i$ is the (multiset) union of $k$ bases? In \cite{ak} it was shown that for $k$ odd there does not necessarily exist a decomposition into $k$ $\cm$-SRs.
We do not know a counterexample in the even case. Moreover, the following may be true:

\begin{conjecture}\cite{ak}
If $F_1, \ldots ,F_k$ are sets of size $k$ in a matroid $\cm$, such that $\chi(\cm)\le k$, then there exist $k-1$ disjoint $\cm$-SRs.
\end{conjecture}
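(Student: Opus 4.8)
\medskip
\noindent\textbf{A proposed approach.}
The plan is to recast the conjecture as a \emph{packing} problem for common independent sets of two matroids, and then to separate it into a fractional part, which I expect to be accessible, and an integral rounding part, which I expect to be the real difficulty. Let $U=\bigcup_{i\le k}F_{i}$ be the union taken as a multiset, so $|U|=k^{2}$, let $\cp$ be the partition matroid on $U$ whose parts are $F_{1},\dots,F_{k}$, and note that an $\cm$-SR of $\cf$ is exactly a set that is independent in $\cm$ and a transversal of $(F_{1},\dots,F_{k})$, i.e.\ a common independent set of $\cm$ and $\cp$ of size $k$. Since $\chi(\cm|_{U})\le k$ we have $|U|\le k\cdot r(\cm|_{U})$ and hence $r(\cm|_{U})\ge k$, so such common independent sets have the largest possible size, and at least one exists by Rado's theorem \cite{rado} --- this is the theorem stated just after Observation \ref{trivial}. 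The conjecture asks for $k-1$ of them, pairwise disjoint.

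The first step would be to establish the fractional relaxation: a family of $\cm$-SR's $T_{1},\dots,T_{N}$ with nonnegative weights $\l_{1},\dots,\l_{N}$ of total weight $k-1$ such that every element of $U$ is covered with total weight at most $1$. The hypothesis supplies a partition $U=I_{1}\sqcup\cdots\sqcup I_{k}$ into $\cm$-independent sets, which --- re-balancing if necessary and using $|U|=k^{2}$, or else passing throughout to the fractional chromatic number $\chi^{*}$ --- we may assume satisfies $|I_{j}|=k$ for every $j$; there is also the trivial partition $U=J_{1}\sqcup\cdots\sqcup J_{k}$ of $U$ into $\cp$-bases, obtained by distributing the $k$ elements of each $F_{i}$ one into each $J_{t}$. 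Consider the bipartite multigraph on the two index sets $[k]$, with one edge $(i,j)$ for each $e\in F_{i}\cap I_{j}$; the $F$-side is $k$-regular, so a K\"onig proper edge-colouring with $k$ colours decomposes $U$ into $k$ transversals, each of which is \emph{rainbow} with respect to $I_{1},\dots,I_{k}$. These transversals are in general not $\cm$-independent --- being rainbow with respect to a partition into independent sets does not imply independence --- and repairing them is the crux of this step. Here I would feed the configuration, together with the topological connectivity estimates for independence complexes of matroid unions (in the spirit of the work of Aharoni and Haxell, and of Theorems \ref{abm} and \ref{abmcorollary}), into a deficiency version of the topological Hall theorem, aiming to lose at most one unit and so to produce a fractional packing of value $k-1$; alternatively one can try to derive the fractional statement directly from LP duality, where infeasibility would have to contradict $\chi^{*}(\cm|_{U})\le\chi(\cm|_{U})\le k$.

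The main obstacle is the integral rounding: passing from the fractional packing to an honest packing of $k-1$ pairwise disjoint $\cm$-SR's while losing only a single unit. That one genuinely cannot expect $k$ disjoint SR's here --- and that no generic rounding estimate can do the job --- is exactly the content of the odd-$k$ counterexample from \cite{ak} mentioned above, so the rounding argument has to be sensitive to parity rather than a black box. I would attempt an augmenting-path scheme: maintain a partial integral packing, and whenever no further $\cm$-SR can be added disjointly from those already chosen, reroute elements between the packed SR's and the leftover set, using the fractional certificate to guide the exchange, in the style of the recent progress on Rota's basis conjecture by Geelen and Webb and by Bucic, Kwan, Pokrovskiy and Sudakov. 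Showing that this process terminates only once $k-1$ SR's have been placed, and not one or two steps sooner, is where I expect the essential difficulty --- and in all likelihood a genuinely new idea --- to lie.
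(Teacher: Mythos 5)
The statement you were asked to prove is a \emph{conjecture}, attributed to Aharoni and Kotlar \cite{ak}; the paper states it without proof, and to my knowledge it remains open. So there is no ``paper's own proof'' to compare against, and your submission --- which you yourself frame as a proposed approach rather than a proof --- should be assessed on its own terms.

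Your reformulation as a packing problem for common independent sets of $\cm$ and the partition matroid $\cp$ on $(F_1,\dots,F_k)$ is the standard and correct way to view the problem, and the bookkeeping in the first paragraph (rank $\ge k$, existence of one $\cm$-SR via Rado) is fine. The re-balancing of the $\cm$-colour classes $I_1,\dots,I_k$ to all have size $k$ is legitimate for matroids, and the K\"onig edge-colouring does produce $k$ sets each rainbow for both partitions. But you correctly identify, and leave open, the two genuine gaps: (i) rainbow-for-$(I_1,\dots,I_k)$ does not imply $\cm$-independence, and no mechanism is given for ``repairing'' these $k$ candidate transversals into $k-1$ disjoint actual $\cm$-SRs; and (ii) even granting a fractional packing of value $k-1$, the integral rounding is unaddressed, and as you note the odd-$k$ counterexample from \cite{ak} shows this step must be parity-sensitive. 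These are not minor technicalities; they are the conjecture.

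One further caution on the step you expect to be ``accessible.'' The LP-duality heuristic you sketch --- infeasibility of the fractional packing contradicting $\chi^*(\cm|_U)\le k$ --- is off target: the relevant chromatic number is that of the \emph{intersection} $\cm\cap\cp$, not of $\cm$ alone. Bounding $\chi(\cm\cap\cp)$ by $k+1$ is precisely Conjecture \ref{abconj} of the paper (open), and even the proved bound $\chi(\cm\cap\cp)\le 2k$ from Theorem \ref{matroidintersectioncoloring} is far too weak to yield a fractional packing of $k-1$ common \emph{bases} (size-$k$ common independent sets), since a cover by common independent sets says nothing directly about packing full-size ones. So the fractional half is itself nontrivial and likely entangled with Conjecture \ref{abconj}, rather than being a routine preliminary. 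As an outline of where the difficulties sit, your write-up is reasonable; as a proof, it is not one, and the load-bearing steps remain conjectural.
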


In fact, we can also ask this question  for a general number of sets. here we exert some measure of caution, and pose it in the form of a question, rather than a conjecture:

\begin{prob}
If $F_1, \ldots ,F_m$ are sets of size $k$ in a matroid $\cm$ satisfying $\chi(\cm)\le k$, do there necessarily exist $m-1$ disjoint $\cm$-SRs?
\end{prob}



\subsection{Decompositions in the intersection of two matroids}
Here is yet another conjecture stemming from the combination of all three ingredients. We are dealing with two matroids, assuming something about their chromatic numbers, and require a low chromatic number of the intersection:

\begin{conjecture}\label{abconj}\cite{ab}
For any pair of matroids $\cm,~\cn$ on the same ground set, $$\chi(\cm \cap \cn) \le \max(\chi(\cm), \chi(\cn))+1.$$
\end{conjecture}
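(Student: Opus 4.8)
\medskip
\noindent\textit{A proof proposal.} The plan is to split the bound into an essentially free fractional estimate and a hard rounding step, and to locate the whole difficulty in the latter. For the fractional estimate, set $k=\max(\chi(\cm),\chi(\cn))$. Covering any $A\subseteq V$ by $k$ sets independent in $\cm$ forces the largest of them to be an independent set of size $\ge|A|/k$, so $r_\cm(A)\ge|A|/k$ for every $A$, and likewise $r_\cn(A)\ge|A|/k$. Thus $\frac{1}{k}\mathbf 1$ obeys the rank inequalities of both matroids, and Edmonds' description of the matroid intersection polytope writes it as a convex combination $\frac{1}{k}\mathbf 1=\sum_C\lambda_C\mathbf 1_C$ over common independent sets $C$, with $\sum_C\lambda_C=1$. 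Scaling by $k$ exhibits $\mathbf 1$ as a nonnegative combination of such indicators of total weight $k$, i.e. $\chi^*(\cm\cap\cn)\le k$.

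It remains to round: from a weight-$\le k$ fractional cover by common independent sets, produce $k+1$ of them covering $V$. For a single matroid this is free, since Edmonds' matroid covering theorem (iterated matroid union) gives $\chi(\cm)=\lceil\chi^*(\cm)\rceil$; but $\cm\cap\cn$ is not a matroid and has no union theorem, so a substitute is required. I would pursue three routes in parallel. (i) Polyhedral: extreme points of the matroid-intersection covering polytope are governed by two laminar families, and one hopes to round iteratively while charging the defect at each step to a single new colour class. (ii) Topological, in the spirit of \cite{matcomp} and the method behind Theorem \ref{thm:mcapnsr}: lower-bound the connectivity of the complex of common independent subsets of $A$ in terms of $\nu_{\cm\cap\cn}(A)$, then feed this into an Aharoni--Berger--Haxell type topological Hall/Rota theorem to produce a partition of $V$ into $k+1$ common independent sets. (iii) Inductive: delete or contract a well-chosen element, apply the inductive hypothesis to the smaller pair of matroids, and argue that the removed element can always be reabsorbed into one of the $k+1$ classes, or at worst forces exactly one extra class. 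The degenerate cases should be cleared first: if one of the matroids is free then $\cm\cap\cn$ is just the other matroid and there is nothing to prove, and if $\cm,\cn$ are partition matroids then $\cm\cap\cn$ is the matching complex of a bipartite graph $G$ and K\"onig's edge-colouring theorem gives $\chi(\cm\cap\cn)=\Delta(G)=\max(\chi(\cm),\chi(\cn))$ --- even stronger than the conjectured bound.

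The obstacle is entirely the rounding. The fractional value is pinned down by the one-line polyhedral argument above, so the whole content of the conjecture is that the integrality gap of covering by common independent sets of two matroids is at most $1$. At present only a constant-factor loss is available: greedy rounding of the fractional cover loses a logarithmic factor, and the sharpest known connectivity estimates for $\cm\cap\cn$ are good enough only to yield a bound of roughly $2\max(\chi(\cm),\chi(\cn))$. Squeezing any of routes (i)--(iii) down to an additive $+1$, and simultaneously ruling out that $+2$ might be needed, is precisely where the missing union theorem for $\cm\cap\cn$ makes itself felt, and is where I expect the real work to lie.
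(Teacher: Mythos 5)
This statement is a conjecture attributed to \cite{ab}; the paper offers no proof of it and only records the factor-of-two partial result (Theorem~\ref{matroidintersectioncoloring}, $\chi(\cm\cap\cn)\le 2\max(\chi(\cm),\chi(\cn))$). So there is no argument in the paper for you to be measured against, and your own text does not actually claim to close the gap --- it correctly locates the entire content of the conjecture in the rounding step and then lists candidate strategies without carrying any of them out. That self-assessment is accurate.

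What you do prove, the fractional half, is correct and cleanly argued: $\chi(\cm)\le k$ gives $r_\cm(A)\ge|A|/k$ for every $A$, likewise for $\cn$, so $\tfrac1k\mathbf 1$ lies in Edmonds' common-independent-set polytope and decomposes as a convex combination of indicators of common independent sets, whence $\chi^*(\cm\cap\cn)\le k$. Your sanity checks are also right (a free matroid reduces to a single matroid; two partition matroids reduce to K\"onig's edge-colouring theorem, which actually achieves the bound with no $+1$). The one thing worth saying more sharply than you do: the obstacle is not merely that $\cm\cap\cn$ lacks a union theorem, but that even the integer round-up $\chi=\lceil\chi^*\rceil$, which Edmonds' covering theorem supplies for a single matroid, is exactly what is unknown here --- the conjecture asserts an additive integrality gap of at most $1$, and neither your routes (i)--(iii) nor anything in the literature cited by the paper gets below the multiplicative factor $2$ of Theorem~\ref{matroidintersectioncoloring}. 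In short: your proposal is an honest and correct account of the state of the problem, but it is not a proof, and the paper does not contain one either.
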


A special case relates to ``scrambled Rota'', in which the matroids are the original matroid and the partition matroid defined by the given sets. Here is the conjecture, explicitly:

\begin{conjecture}
Given sets $F_1, \ldots ,F_m$ of size $k$ in a matroid $\cm$ satisfying $\chi(\cm) \le k$, there exist $k+1$ $\cm$-SRs whose union is $\bigcup_{i \le m}F_i$.
\end{conjecture}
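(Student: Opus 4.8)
The plan is to recognise this statement as the special case of Conjecture~\ref{abconj} in which one of the two matroids is a partition matroid, and then to try to exploit that extra structure. Take as ground set $E=\bigcup_{i\le m}F_i$, regarded as a multiset, and let $\cn$ be the partition matroid on $E$ whose blocks are $F_1,\ldots,F_m$; thus a set is $\cn$-independent exactly when it is a partial transversal of $\cf=(F_1,\ldots,F_m)$, and an $(\cm\cap\cn)$-independent set is exactly a partial $\cm$-SR of $\cf$. Consequently a decomposition of $E$ into $t$ members of $\cm\cap\cn$ is the same thing as a decomposition of $\bigcup\cf$ into $t$ $\cm$-SRs (partial SRs can be padded to full ones using fresh copies, a harmless device). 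Since every block of $\cn$ has size $k$ we have $\chi(\cn)=k$, while $\chi(\cm)\le k$ by hypothesis, so the assertion to be proved is precisely $\chi(\cm\cap\cn)\le k+1=\max(\chi(\cm),\chi(\cn))+1$, i.e.\ Conjecture~\ref{abconj} for a (matroid, partition matroid) pair.

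It is convenient to phrase the goal as a colouring problem: order each $F_i$ and assign every element a colour in $[k+1]$; we need each block to receive $k$ distinct colours (equivalently, an injection $\sigma_i\colon F_i\hookrightarrow[k+1]$ for every $i$) and each colour class to be $\cm$-independent. The hypothesis $\chi(\cm)\le k$ supplies a colouring $g\colon E\to[k]$ with $\cm$-independent classes, but $g$ need not be injective on blocks; the task is to straighten $g$ into a block-rainbow colouring at the cost of one extra colour. I would attack this along two lines. First, a topological line: model the extraction of a single partial $\cm$-SR meeting a prescribed family of blocks as a $\ci(\cm)$-SR (ISR) problem, invoke the topological Hall theorem (Aharoni--Haxell, Aharoni--Berger--Ziv) together with Meshulam-type connectivity estimates for the independence complex of $\cm$ (or sharper matroidal bounds), and peel off $\cm$-SRs one layer at a time, hoping that after $j$ rounds every residual block still has size $\ge k-j$ while the chromatic number of the residual restriction of $\cm$ is still $\le k-j$, so that the process survives $k+1$ rounds. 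Second, an exchange line: start from any $k$-colouring of $\cm$ and repair block conflicts by alternating-path swaps between colour classes, using colour $k+1$ as a sink for the at most one offending element per block.

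The main obstacle is exactly the point at which all the difficulty of Conjecture~\ref{abconj} is concentrated: there is at present no method that controls $\chi(\cm\cap\cn)$ up to an additive constant. The factor-type results behind Theorem~\ref{thm:mcapnsr} yield only a decomposition into $O(k)$ $\cm$-SRs here, not $k+1$; the peeling argument loses a constant (or worse) per round because the available connectivity bounds are not tight enough, and so stops well before round $k+1$; and the exchange argument runs into the genuine difficulty that $\cm\cap\cn$ is not a matroid, so augmenting alternating paths need not exist and the repair process can stall. Moreover the $+1$ cannot be dropped: by \cite{ak} the analogous demand of $k$ \emph{disjoint} $\cm$-SRs already fails for odd $k$, so any correct argument must be delicate enough to see that one extra colour always suffices while zero does not. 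For this reason I expect the viable route to be the topological one, the crux being to establish a sharpened connectivity estimate for the independence complexes arising from the intersection of a matroid with a partition matroid, strong enough to feed the peeling argument.
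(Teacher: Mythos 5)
The statement you were asked to prove is a \emph{conjecture}: the paper offers no proof of it, and it is open. The paper's own framing is exactly the one you identify — it is introduced as the special case of Conjecture~\ref{abconj} in which one of the two matroids is the partition matroid $\cn$ with blocks $F_1,\ldots,F_m$, so that $\chi(\cn)=k$, $\chi(\cm)\le k$, and the desired decomposition into $k+1$ $\cm$-SRs is precisely $\chi(\cm\cap\cn)\le\max(\chi(\cm),\chi(\cn))+1$. Your reduction therefore reproduces the paper's observation rather than going beyond it.

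Your diagnosis of the state of the art is also accurate: the only general tool available is Theorem~\ref{matroidintersectioncoloring}, which gives $\chi(\cm\cap\cn)\le 2\max(\chi(\cm),\chi(\cn))$ and hence only $2k$ partial $\cm$-SRs (exactly the Corollary that follows the conjecture in the paper), and neither the topological/peeling route nor the exchange/alternating-path route is currently known to close the gap to $k+1$. You are also right that the additive constant cannot be improved to $0$ in general, by the result of \cite{ak}. In short, there is no error in what you wrote, but there is also no proof: you have correctly located the conjecture inside the Aharoni--Berger framework and explained why the known methods stop short, which is the same position the paper itself takes. One small terminological caveat: the paper's ``$k+1$ $\cm$-SRs whose union is $\bigcup F_i$'' must be read as \emph{partial} $\cm$-SRs (equivalently, members of $\cm\cap\cn$), since $k+1$ full SRs of a family of $m$ sets have $(k+1)m$ elements while $|\bigcup F_i|=km$; your formulation in terms of covers by $(\cm\cap\cn)$-independent sets is the accurate one.
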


  Conjecture \ref{abconj} is close in spirit to a well known conjecture of  Goldberg and Seymour \cite{goldberg, seymour}:

\begin{conjecture}\label{conj:gs}
In any multigraph $\chi' \le \chi'^*+1$.
\end{conjecture}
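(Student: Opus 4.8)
The plan is to pass to the equivalent \emph{density} formulation and then run the critical‑graph / Tashkinov‑tree machinery. By Edmonds' matching‑polytope theorem one has $\chi'^{*}(G)=\max\{\Delta(G),\rho(G)\}$, where $\rho(G)=\max\{2|E(H)|/(|V(H)|-1):H\subseteq G,\ |V(H)|\ge 3\text{ odd}\}$; so it suffices to prove the Goldberg--Seymour inequality $\chi'(G)\le\max\{\Delta(G)+1,\lceil\rho(G)\rceil\}$. Assume it fails and take an edge‑minimal counterexample $G$; writing $k:=\chi'(G)-1$ we then have $k\ge\Delta(G)+1$, $k\ge\rho(G)$, $G$ has no proper $k$‑edge‑colouring, but $G-e$ does for every edge $e$. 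The target contradiction is an odd subgraph $H$ with $2|E(H)|>k(|V(H)|-1)$, which violates $k\ge\rho(G)$.

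First I would fix a critical edge $e=xy$ and a $k$‑edge‑colouring $\varphi$ of $G-e$, and build a maximal Tashkinov tree $T=(v_0,e_1,v_1,e_2,v_2,\dots)$ rooted at $e=e_1$: each linking edge $e_i$ joins $v_i$ to an earlier vertex and carries a colour missing at some earlier tree vertex. The heart of the argument is the theorem of Tashkinov, in its final form due to Chen, Jing and Zang, that the vertex set of such a maximal tree is \emph{elementary} --- the colour sets missing at distinct tree vertices are pairwise disjoint. Granting this, a short count of missing colours against the $k$ available colours shows that if $|V(T)|$ is odd then $G[V(T)]$ is the dense odd subgraph we want; so the real work is to guarantee that a maximal (extended) Tashkinov tree is elementary and, when it is not of odd order, can be enlarged or converted to a new colouring carrying a strictly larger tree.

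That last point is where I expect the main obstacle to sit, and it is the reason the Goldberg--Seymour conjecture was open for so long. One has to analyse the ``closed'' colours of $T$ (those appearing on every edge leaving $V(T)$), perform Kempe‑chain interchanges along carefully chosen two‑coloured paths to either extend $T$ or move to a better colouring, and --- following the Chen--Jing--Zang scheme --- replace $T$ by an \emph{extended} Tashkinov tree equipped with additional path‑and‑fan structure for which a stronger elementarity theorem holds; the parity bookkeeping (even order $\Rightarrow$ extendable or recolourable, odd order $\Rightarrow$ density violation) then finishes the argument. I do not expect a shortcut here: the inductive recolouring lemmas, and the case analysis of how a maximal extended Tashkinov tree attaches to the rest of $G$, are long and delicate and constitute the true content of the proof.
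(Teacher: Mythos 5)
The statement you were asked about is Conjecture~\ref{conj:gs}, the Goldberg--Seymour conjecture, and the paper gives \emph{no proof} of it: it is stated as an open conjecture, cited from \cite{goldberg,seymour}, purely to motivate the kinship with Conjecture~\ref{abconj} on colouring matroid intersections. There is therefore no argument in the paper against which to measure yours.

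As a stand-alone proposal, what you have written is an accurate high-level description of the Tashkinov-tree / Chen--Jing--Zang programme: the reduction via Edmonds' formula $\chi'^{*}(G)=\max\{\Delta(G),\rho(G)\}$ to the density inequality $\chi'(G)\le\max\{\Delta(G)+1,\lceil\rho(G)\rceil\}$, the passage to a $k$-critical counterexample, the construction of a maximal Tashkinov tree, the elementarity theorem, and the extension/recolouring dichotomy by parity. But you explicitly concede that ``the inductive recolouring lemmas, and the case analysis of how a maximal extended Tashkinov tree attaches to the rest of $G$, are long and delicate and constitute the true content of the proof'' and that you are not supplying them. That deferred content is precisely why this remained a conjecture for decades; without it one has neither the elementarity of the extended trees nor the guarantee that an even-order maximal tree can always be grown or recoloured, and hence no contradiction. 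So your proposal does not conflict with the paper (which proves nothing here), but it is a programme outline, not a proof: the statement in the paper remains, as labeled, a conjecture.
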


 The kinship between the two conjectures can be given a precise formulation: in \cite{ab} a common generalization of the two was suggested, in terms of $2$-polymatroids.

 As often happens in this field, ``half'' of Conjecture \ref{abconj} is known to be true:

\begin{theorem}\cite{ab}\label{matroidintersectioncoloring}
For  $\cm,~\cn$ as above,  $\chi(\cm \cap \cn) \le 2\max(\chi(\cm), \chi(\cn))$.
\end{theorem}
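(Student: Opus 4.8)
The plan is to prove the formally stronger inequality $\chi(\cm\cap\cn)\le\chi(\cm)+\chi(\cn)$, which gives the theorem since $\chi(\cm)+\chi(\cn)\le 2\max(\chi(\cm),\chi(\cn))$, by induction on $\chi(\cm)+\chi(\cn)$. One may assume both matroids are loopless, since a loop lies in no independent set and then all the quantities in question are infinite. When $\chi(\cm)+\chi(\cn)=0$ the ground set is empty and there is nothing to prove. For a nonempty ground set $E$, note that for every $S\subseteq E$ one has $\chi(\cm|_S)\le\chi(\cm)$ and $\chi(\cn|_S)\le\chi(\cn)$ (intersect an optimal colouring of $E$ with $S$), and that the common independent sets of $\cm|_S,\cn|_S$ are exactly the common independent sets of $\cm,\cn$ contained in $S$. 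So the inductive step reduces to the following.

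\medskip
\noindent\emph{Lemma.} If $E\neq\emptyset$ and $\cm,\cn$ are loopless matroids on $E$, then there is a common independent set $I$ of $\cm$ and $\cn$ with $\chi(\cm|_{E\setminus I})<\chi(\cm)$ or $\chi(\cn|_{E\setminus I})<\chi(\cn)$.
\medskip

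Given the lemma, pick such an $I$, cover $E\setminus I$ by at most $\chi(\cm|_{E\setminus I})+\chi(\cn|_{E\setminus I})\le\chi(\cm)+\chi(\cn)-1$ common independent sets via the induction hypothesis, and append $I$.

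To attack the lemma, write $a=\chi(\cm)$ and $b=\chi(\cn)$ with $a\le b$, and try to produce $I$ with $\chi(\cn|_{E\setminus I})\le b-1$ (arguing symmetrically, or directly for $\cm$, when this cannot be forced). By the matroid covering (Nash--Williams) theorem, $\chi(\cn)=b$ amounts to the existence of a subset witnessing $|T|>(b-1)r_{\cn}(T)$, together with a decomposition $E=X\sqcup Y$ with $X$ independent in $\cn$ and $\chi(\cn|_Y)\le b-1$; the desired $I$ should absorb such an $X$ while remaining independent in $\cm$. The natural candidate is a maximum common independent set $I$, with $|I|=\nu(\cm,\cn)=\min_{S\subseteq E}\bigl(r_{\cm}(S)+r_{\cn}(E\setminus S)\bigr)$ by Edmonds' matroid intersection theorem: if $S^{\ast}$ attains this minimum, then $I\cap S^{\ast}$ is an $\cm$-basis of $S^{\ast}$ and $I\cap(E\setminus S^{\ast})$ an $\cn$-basis of $E\setminus S^{\ast}$, and one tries to leverage this so that every subset of $E\setminus I$ that is tight for $\cm$ or for $\cn$ has lost enough rank to violate tightness.

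I expect this lemma to be the main obstacle. The delicate issue is that one must simultaneously break all the tight subsets of one of the two matroids, while in general only one of $\cm,\cn$ can be guaranteed to improve at a given step — which is exactly why the argument yields a factor $2$ rather than the conjectured $+1$. An alternative route I would also pursue is to first prove the fractional statement $\chi^{\ast}(\cm\cap\cn)\le\max(\chi^{\ast}(\cm),\chi^{\ast}(\cn))$ by weighted matroid intersection and LP duality, and then lose a factor of $2$ when rounding an optimal fractional cover to an integral one; there the rounding step is the crux.
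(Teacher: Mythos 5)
The paper does not prove this theorem; it cites it from \cite{ab}, where the argument goes through the topological connectivity of the independence complex of $\cm\cap\cn$ (matroid complexes are shellable, hence highly connected, and this is combined with a topological Hall-type theorem to bound $\chi$ in terms of a fractional cover). Your proposal is an entirely different, purely combinatorial route, and as you yourself concede it is not a proof: the whole weight of the argument rests on your unproved Lemma, namely that for any loopless $\cm,\cn$ on a nonempty ground set there is a common independent set $I$ whose removal strictly decreases $\chi(\cm)$ or $\chi(\cn)$. Nothing you write establishes this. Taking $I$ to be a maximum common independent set and invoking Edmonds' min-max formula gives you tight information about one witnessing set $S^{\ast}$, but the Nash--Williams covering theorem requires you to destroy \emph{every} subset $T$ with $|T|>(\chi(\cn)-1)\,r_{\cn}(T)$ simultaneously, and you offer no mechanism for doing so. Note also that the Lemma is equivalent to the strictly stronger inequality $\chi(\cm\cap\cn)\le\chi(\cm)+\chi(\cn)$, which to my knowledge is not a known theorem and is not claimed in \cite{ab}; you would therefore be proving something genuinely new, so the burden of proof on the Lemma is heavy.

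Your fractional fallback has the same problem shifted elsewhere. The inequality $\chi^{\ast}(\cm\cap\cn)\le\max(\chi^{\ast}(\cm),\chi^{\ast}(\cn))$ is indeed true and provable by LP duality together with the matroid union/intersection formulas, but the rounding step $\chi(\cm\cap\cn)\le 2\chi^{\ast}(\cm\cap\cn)$ is exactly the hard content of the theorem: there is no generic integrality-gap-of-two phenomenon for covering a ground set by common independent sets of two matroids, and this is precisely where the topological machinery of \cite{ab} (via \cite{matcomp}) is invoked. In short, you have correctly identified a skeleton on which a proof could hang and correctly located the hard part, but in both of your routes the hard part is left as an acknowledged gap, so this is a strategy outline rather than a proof.
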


\begin{corollary}{\hfill}

If $F_1, \ldots ,F_m$ are independent sets of size $k$ in a matroid $\cm$ satisfying $\chi(\cm) \le k$  then there exist $2k$ $\cm$-SRs whose union is $\bigcup_{i \le m}F_i$.
\end{corollary}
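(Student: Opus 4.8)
The plan is to read the statement off Theorem~\ref{matroidintersectioncoloring} by taking for $\cn$ the partition matroid carried by the family. First I would reduce to the case in which the $F_i$ are pairwise disjoint and their union is the whole ground set of $\cm$: for the second point one simply restricts $\cm$ to $\bigcup_{i\le m}F_i$, which does not raise $\chi$; for the first point, when two of the $F_i$ share an element, one separates them by introducing fresh parallel copies. Now let $\cn$ be the partition matroid on $\bigcup_{i\le m}F_i$ whose blocks are exactly $F_1,\dots,F_m$, so that a set is independent in $\cn$ precisely when it contains at most one element of each $F_i$. Since every block has size $k$, I claim $\chi(\cn)=k$: inside a single block all elements are pairwise parallel, so $k$ colours are forced there, while $k$ colours suffice globally by assigning the $j$-th element of every $F_i$ the colour $j$.

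Next I apply Theorem~\ref{matroidintersectioncoloring} to the pair $\cm,\cn$. By hypothesis $\chi(\cm)\le k$, and by the previous paragraph $\chi(\cn)=k$, so $\chi(\cm\cap\cn)\le 2\max(\chi(\cm),\chi(\cn))\le 2k$. By the definition of the chromatic number of a simplicial complex, this says that $\bigcup_{i\le m}F_i$ is covered by $2k$ sets, each independent in both $\cm$ and $\cn$. Removing each element from all but one of the covering sets turns the cover into a partition (subsets of independent sets are independent), so we obtain pairwise disjoint $S_1,\dots,S_{2k}$, each independent in $\cm\cap\cn$, whose union is $\bigcup_{i\le m}F_i$. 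Finally each $S_t$ is independent in $\cm$ and meets every $F_i$ at most once, hence it is the range of a partial $\cm$-SR; thus $S_1,\dots,S_{2k}$ are the desired $2k$ $\cm$-SR's.

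The step I expect to require genuine care is the reduction to pairwise disjoint $F_i$: replacing a shared element by parallel copies can in principle push the chromatic number of the ambient matroid above $k$, so this either has to be carried out sparingly (using that the hypothesis is stated for the original $\cm$) or the overlapping case must be handled directly. Once the $F_i$ are disjoint, every step above is routine, and all of the substantive content is packaged in the quoted Theorem~\ref{matroidintersectioncoloring}, which I use as a black box. One should also keep in mind throughout that the $\cm$-SR's produced are partial choice functions, which is exactly what ``whose union is $\bigcup_{i\le m}F_i$'' asks for.
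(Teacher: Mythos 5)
Your argument is correct and is exactly the intended deduction: take for $\cn$ the partition matroid whose blocks are the $F_i$'s, note $\chi(\cn)=k$ since every block has size $k$, apply Theorem~\ref{matroidintersectioncoloring} to get $\chi(\cm\cap\cn)\le 2k$, and read off the proper $2k$-colouring as $2k$ partial $\cm$-SRs covering $\bigcup F_i$. The one step you flag as potentially delicate is in fact harmless under the paper's conventions: as the paper repeatedly glosses, the hypothesis ``$\chi(\cm)\le k$'' here means that the \emph{multiset} union $\bigcup_{i\le m}F_i$ decomposes into $k$ independent sets, so the ground set is already the disjoint (blown-up) union and introducing parallel copies for shared elements is built into the hypothesis rather than an extra reduction that could raise $\chi$.
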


\subsection{Distinct edges}

It is an intriguing fact that in some theorems and conjectures on hypergraph matchings the only known examples showing sharpness use repeated edges, or even repeated sets $F_i$. It is  tempting to conjecture that under an assumption of distinctness (of either edges or sets) the conditions can be weakened.  Example \ref{example:absz} is one case in point. Here are two more examples:

\begin{theorem}\label{drisko}\cite{drisko, ab}

$2k-1$ matchings of size $k$ in a bipartite graph have a partial rainbow matching of size $k$.

\end{theorem}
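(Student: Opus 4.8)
The plan is to take a \emph{maximum} partial rainbow matching and show it must already have size $k$. First I would reduce to the case where every $F_i$ has size exactly $k$ (discard the extra edges) and recall why $2k-1$ is the right threshold: taking $k-1$ copies of the matching $\{(a_1,b_1),\dots,(a_k,b_k)\}$ together with $k-1$ copies of $\{(a_1,b_2),\dots,(a_k,b_1)\}$ as in Example~\ref{standard} gives $2k-2$ matchings of size $k$ with no rainbow matching of size $k$, so the bound, once proved, is sharp. Let $G$ be bipartite with sides $X,Y$, let $M$ be a rainbow matching of maximum size $r$, say $e_i\in F_i$ for $i\le r$, and suppose for contradiction that $r\le k-1$; then at least $2k-1-r\ge k$ of the matchings are unused.

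Next I would extract the structural consequences of maximality. Because $M$ cannot be extended, every edge of every unused matching meets $V(M)$; equivalently $V(M)$ is a vertex cover of the union $G'$ of the unused matchings, so $k\le \nu(G')\le |V(M)|=2r$ and hence $k/2\le r\le k-1$. On the $r$ edges of $M$, oriented from their $X$-endpoint to their $Y$-endpoint, I would build a rotation digraph $D$ on vertex set $\{e_1,\dots,e_r\}\cup\{s,t\}$: an arc $e_i\to e_j$ for each unused edge joining the $X$-end of $e_i$ to the $Y$-end of $e_j$, an arc $s\to e_i$ for each unused edge meeting $V(M)$ only in the $X$-end of $e_i$, and an arc $e_j\to t$ for each unused edge meeting $V(M)$ only in the $Y$-end of $e_j$, every arc carrying the colour of the edge that produced it. A directed $s$--$t$ path in $D$ whose arcs use \emph{distinct} (and unused) colours is an augmenting alternating path: performing the corresponding swaps along $M$ produces a rainbow matching of size $r+1$, contradicting maximality. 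A rainbow directed cycle, likewise, replaces $M$ by another maximum rainbow matching using a different colour set; iterating this and combining with the path argument, one concludes that the non-augmentability of $M$ imposes a strong rigidity on how each unused matching meets $V(M)$.

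The punchline is a counting argument in the style of the proof of Theorem~\ref{sevenfourths}. There are at least $k>r$ unused colours but only $r$ internal vertices in $D$, and each unused matching, having $k$ edges all incident to the $2r\le 2k-2$ vertices of $V(M)$, contributes many arcs and incidences to $D$ distributed among only $r$ nodes. Assigning suitable weights to the four edge types (inside $V(M)$, across $V(M)$, source-type, sink-type) and summing, the forced total of arcs exceeds what ``no rainbow directed $s$--$t$ path or cycle'' permits on an $r$-node digraph fed by $k$ colours, a contradiction. Hence $r=k$, which is the assertion.

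The main obstacle is precisely the step where ``directed'' must be promoted to ``rainbow directed'': a generic path in $D$ may repeat a colour, so one needs a Hall/pigeonhole argument — using that the unused colours strictly outnumber the nodes of $D$ and that each colour contributes several arcs — to produce an augmenting path whose colours are genuinely distinct, and then to make the final count tight enough to bite at the threshold $2k-1$ rather than at something larger. Two alternative routes lead to the same conclusion and may be cleaner to carry out: Drisko's original argument, an induction on the number of rows of a row-Latin rectangle that tracks the possible obstructions until only the configuration of Example~\ref{standard} survives; and the Aharoni--Berger derivation, which bounds from below the topological connectivity $\eta$ of the independence complex of the line graph of the union of any subfamily of the matchings and then invokes the topological Hall-type theorem for rainbow independent sets. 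I would present the rotation-digraph argument as the main line and cite the other two as alternatives.
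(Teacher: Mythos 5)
The paper does not prove this statement; it simply cites it to \cite{drisko} (for row-Latin rectangles) and \cite{ab} (for the bipartite generalization). So there is no "paper's own proof" to compare against, and your proposal must stand on its own — which it does not quite do.

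Your setup is sensible and correct as far as it goes: passing to a maximum rainbow matching $M$ of size $r$, deducing $k/2 \le r$ from the fact that $V(M)$ is a vertex cover of the union of the unused matchings, and building a rotation digraph $D$ on $\{e_1,\dots,e_r\}\cup\{s,t\}$ whose coloured arcs record how unused edges meet $V(M)$. The observation that a \emph{rainbow} directed $s$--$t$ path would yield an augmenting alternating path is also correct, and it is indeed the right thing to aim for. But the proof stops exactly at the hard part. You write that "one needs a Hall/pigeonhole argument ... to produce an augmenting path whose colours are genuinely distinct," and that the "forced total of arcs exceeds what 'no rainbow directed $s$--$t$ path or cycle' permits" — but no inequality is produced, no weighting scheme is specified, and the claimed rigidity forced by non-augmentability is never extracted. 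This is not a small detail: the whole difficulty of the Drisko/Aharoni--Berger theorem is precisely that "many coloured arcs on few nodes" does \emph{not} by itself force a rainbow path — a path can revisit the same colour — and promoting "directed" to "rainbow directed" is the theorem, not a lemma one can wave at. Your remark about rainbow cycles letting one rotate among maximum rainbow matchings is a plausible ingredient of such an argument, but again it is asserted, not carried out. As it stands this is an outline of an attack, with the central step marked "to be done."

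Two smaller points on the fallback routes you mention. Drisko's original induction \cite{drisko} proves the statement only for $p\times n$ \emph{row-Latin rectangles}, i.e.\ $2n-1$ \emph{perfect} matchings of size $n$ on a fixed $2n$-vertex bipartite graph; it does not by itself give the general bipartite statement in the theorem, which requires the Aharoni--Berger extension. And your description of the Aharoni--Berger route (bounding the topological connectivity of the independence complex of the line graph and applying the topological Hall-type theorem) is accurate, but simply naming that machinery is not a proof either. To complete the proposal you would either need to actually run the weighted count and show it yields a rainbow augmenting path for $r<k$, or honestly reduce to one of the cited results and carry out the reduction. Right now the proposal correctly frames the problem but leaves the decisive step unproved.
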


The example showing sharpness is $F_1=\ldots=F_{k-1}=\{(a_1,b_1),(a_2,b_2),\ldots ,(a_k,b_k)\}$, and  $F_k=F_{k+1}=\ldots=F_{2k-2}=\{(a_1,b_2),(a_2,b_3),\ldots ,(a_k,b_1)\}$. It is tempting to make the following conjecture:

\begin{conjecture}\label{conj:disjoint}

$k+1$ disjoint matchings of size $k$ in a bipartite graph have a partial rainbow matching of size $k$.

\end{conjecture}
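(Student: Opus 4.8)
The plan is to argue by contradiction, in the spirit of the proofs in Sections \ref{sec:proof1} and \ref{sec:proof2}. Suppose $F_1,\ldots,F_{k+1}$ are pairwise edge-disjoint matchings of size $k$ in a bipartite graph $G$ having no partial rainbow matching of size $k$, and let $M=\{f_1,\ldots,f_p\}$, with $f_i\in F_i$, be a maximum partial rainbow matching; by assumption $p\le k-1$, so there are at least two \emph{free} indices $j\in\{p+1,\ldots,k+1\}$ whose matchings are unrepresented. Maximality of $M$ says not only that every edge of every $F_j$ with $j>p$ meets $\bigcup M$, but that no alternating path starting at such an edge can be used to enlarge $M$; this is the source of all the structural information. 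Note that distinctness must do genuine work here, since already $k$ edge-disjoint matchings of size $k$ can fail: for even $k$, a decomposition of $K_{k,k}$ into $k$ transversal-free perfect matchings is such an example, so one cannot expect to invoke Drisko's theorem (Theorem \ref{drisko}) for free — the threshold $2k-1$ there is $2(k+1)-3$, i.e. $k-2$ matchings short of what we are given.

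First I would push elementary counting as far as it goes. Running the ``good edge'' analysis of Claim \ref{kminus2p} adapted from $3$-partite to bipartite hypergraphs (so the key local bound becomes $\phi(f)\le 2$), together with the no-augmentation constraints propagated along $M$ as in Claim \ref{no3}, one should show that when $p$ is much smaller than $k$ each free $F_j$ is forced into a very rigid position relative to $M$: it is ``doubly incident'' with almost all of $M$ and the way its edges attach to $\bigcup M$ is almost completely prescribed. A natural intermediate milestone, whose machinery should feed directly into the full proof, is the weaker statement that $ck+O(1)$ edge-disjoint matchings of size $k$ force a rainbow matching of size $k$ for some constant $c<2$, improving Drisko's bound under distinctness.

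The heart of the matter, and the only place where edge-disjointness really enters, is a stability analysis of Drisko's theorem. In the non-distinct world the conclusion fails, the obstructions being (up to perturbation) the transversal-free decompositions of $K_{k,k}$ and the configurations of Example \ref{standard} and its relatives once repetitions among the $F_i$ are allowed. The plan is to prove that these are essentially the \emph{only} obstructions, in a strong local form: if $M$ is a maximum partial rainbow matching of size $p\le k-1$ and $F_j,F_{j'}$ are two free matchings each of which is maximally blocked against $M$ in the sense above, then the ``complementary pattern'' into which each is forced is uniquely determined by $M$ together with the blocking configuration — the general-$k$ analogue of the elementary fact that for $k=2$ the only matching conflicting with $\{(a,b),(a',b')\}$ in all four pairs is $\{(a,b'),(a',b)\}$. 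It follows that $F_j$ and $F_{j'}$ must share an edge, contradicting disjointness. Concretely, one records, for each edge of $M$ and each free $F_j$, which pairs of edges of $F_j$ hit it, and uses the absence of augmenting paths to turn these records into a system of forced equalities between the edges of $F_j$ and those of $F_{j'}$.

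I expect the stability step to be by far the main obstacle. For $k=2$ the ``complementary matching'' is literally unique and the argument is immediate, but for general $k$ there is a much wider range of near-extremal configurations and of ways an almost-complete rainbow matching can be blocked; controlling all of them, and verifying that any two simultaneously-blocked free matchings are forced to agree on an edge, may require a genuine classification of the transversal-free decompositions of $K_{k,k}$ and their low-order perturbations, which would be of independent interest. A secondary, technical difficulty is that the final contradiction must exploit edge-disjointness of all $\binom{k+1-p}{2}$ pairs of free matchings at once and keep it consistent with the counting of the first step. An alternative would be the topological route — showing, as in the proofs of Theorem \ref{drisko} and the partial results around Conjecture \ref{conj:main} (cf. Theorem \ref{abmcorollary}), that edge-disjointness raises the connectivity of the independence complex of the line graph of $\bigcup F_i$ enough for a Hall-type theorem for hypergraphs to apply with only $k+1$ sets — but this too needs a distinctness-sensitive sharpening of the known connectivity bounds, so the combinatorial stability approach seems the more promising one.
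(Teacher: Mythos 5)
The statement you were asked to prove is stated in the paper as an \emph{open conjecture} (Conjecture~\ref{conj:disjoint}); the paper offers no proof, only the surrounding motivation (Drisko's theorem and the observation that the extremal example for Drisko uses heavily repeated matchings). So there is no in-paper argument to compare against, and the real question is whether your submission constitutes a proof. It does not: what you have written is a research plan, and you say so yourself throughout (``the plan is to argue\ldots'', ``one should show\ldots'', ``I expect the stability step to be by far the main obstacle'', ``may require a genuine classification\ldots'').

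The genuine gap is precisely the step you flag as ``the heart of the matter'': the claimed stability theorem for Drisko, asserting that a maximum partial rainbow matching of size $p\le k-1$ forces any two free, maximally blocked matchings $F_j,F_{j'}$ into an essentially unique ``complementary pattern'' and hence to share an edge. You give no argument for this beyond the $k=2$ case, where uniqueness of the complementary matching is immediate; for general $k$ you explicitly defer to a hoped-for classification of transversal-free decompositions of $K_{k,k}$ and their perturbations, which is not known and would itself be a substantial result (even the set of Latin squares of order $k$ without transversals is not classified). The intermediate milestone — that $ck+O(1)$ edge-disjoint matchings of size $k$ force a full partial rainbow matching for some $c<2$ — is likewise asserted as a target rather than derived; adapting the $\phi(f)\le 2$ counting from Claim~\ref{kminus2p} to the bipartite setting gives information about a \emph{single} free $F_j$ relative to $M$, and you have not shown how edge-disjointness of the free matchings among themselves enters that count. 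Your high-level framing (why Drisko's $2k-1$ bound doesn't apply directly, why edge-disjointness must do real work, the two plausible routes — combinatorial stability vs.\ topological connectivity of the independence complex) is sensible and consistent with the paper's discussion, but none of it closes the gap: as submitted, this is an outline of a possible attack on an open problem, not a proof.
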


Here is yet another  generalization of the Ryser-Brualdi-Stein conjecture:

\begin{conjecture}
In a $d$-regular $n \times n \times n$ $3$-partite simple hypergraph
(not containing repeated edges) there exists a matching of size at
least $\lceil \frac{d-1}{d}n \rceil$.
\end{conjecture}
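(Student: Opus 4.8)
The plan is to recast the statement in the rainbow-matching language of the paper and then attack it by a maximum-matching / alternating-path analysis, in the spirit of the classical lower bounds for partial transversals of Latin squares (the case $d=n$ of this conjecture). Call the three sides $A,B,C$, each of size $n$, and for $a\in A$ let the \emph{link} $L_a\subseteq B\times C$ be the set of pairs completing $a$ to an edge. Then $|L_a|=d$ for every $a$, and a matching of the hypergraph of size $s$ is exactly a rainbow matching of size $s$ for the family $(L_a)_{a\in A}$. The hypotheses translate cleanly: ``simple'' says each $L_a$ is a simple graph on $B\cup C$, and ``$d$-regular'' says that the multigraph $\bigcup_{a\in A}L_a$ is $d$-regular, hence, by K\"onig's theorem, splits into $d$ perfect matchings $P_1,\dots,P_d$ of $B\cup C$.

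Two easy observations should frame the problem and dispose of the trivial range. First, the uniform weight $1/d$ on every edge is a fractional matching of value $n$, so $\nu^{*}=n$ and the conjecture asserts the integrality-gap bound $\nu\ge \nu^{*}-\lfloor n/d\rfloor$. Second, in the line graph $G$ of the hypergraph (vertices $=$ edges, adjacency $=$ meeting in a vertex) every vertex has degree at most $3(d-1)$, while $|V(G)|=dn$, so a greedy independent set gives a matching of size at least $dn/(3d-2)$; this already proves the conjecture for $d=2$, where it reads $\nu\ge\lceil n/2\rceil$, and for $d=2$ only. So the entire content lies in $d\ge 3$.

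For $d\ge 3$, let $M$ be a maximum matching, $|M|=m$, and suppose for contradiction that $m\le \lceil\frac{d-1}{d}n\rceil-1$; then each side has $u:=n-m$ uncovered vertices $U_A,U_B,U_C$ with $du>n$. Maximality forces every one of the $du$ edges through $U_A$ to have its $B$- or $C$-endpoint covered by $M$, and more usefully it forbids short augmenting alternating paths. I would grow, rooted at $U_A$, an alternating forest alternating non-$M$ edges with $M$-edges: whenever a branch reaches $U_B\cup U_C$ we obtain an augmentation, so all leaves stay inside $M$, and one records the set $W\subseteq B\cup C$ of vertices the forest has visited. Double counting the hyperedges incident with $W$ --- using $d$-regularity to produce $d$ incident edges at each vertex of $W$, and using simpleness together with the absence of augmentations to bound how many of these ``return'' into the forest --- should give $u=O(\sqrt n)$, i.e.\ a Woolbright--Brouwer-type estimate $m\ge n-O(\sqrt n)$ that holds uniformly in $d$; a finer accounting should make the factor $\lfloor n/d\rfloor$ appear. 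The delicate part, which I expect to consume most of the work, is controlling the branching of this alternating forest and the overlaps between its branches --- precisely the bookkeeping that makes the classical transversal arguments lengthy.

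I do not expect this route, nor any purely local augmentation argument, to reach $\lceil\frac{d-1}{d}n\rceil$ in general. The case $d=n$ is exactly the Brualdi--Stein conjecture, which is open, and its history --- culminating in the Shor--Hatami bound $n-O(\log^{2}n)$ --- shows that alternating paths alone do not cross the $\sqrt n$ barrier without a genuinely new idea. So the realistic target of this plan is the partial bound $\nu\ge n-O(\sqrt n)$, valid for all $d$, with the full statement remaining the conjecture it is; any improvement would presumably have to exploit the extra regularity available when $d<n$, which has no analogue in the Latin-square case.
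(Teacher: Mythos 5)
The statement you were asked about is an open conjecture in the paper, not a theorem: the authors give no proof, only the remarks that the case $d=2$ holds (via $\nu\ge\tfrac12\tau$ from the Ryser-for-tripartite-$3$-graphs result, together with $\tau=\tau^*=\nu^*=n$ in a regular $n\times n\times n$ $3$-partite hypergraph), that $d=n$ specializes to Brualdi--Stein, and that the bound, if true, is sharp. So there is no proof to compare your attempt against, and you are right to present a research plan rather than a purported proof.

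Your framing observations are correct. The reduction to a rainbow matching for the links $(L_a)_{a\in A}$ is right, as is the computation $\nu^*=n$ (so the conjecture is the integrality-gap bound $\nu\ge n-\lfloor n/d\rfloor$). Your greedy line-graph argument gives $\nu\ge dn/(3d-2)$, which together with integrality of $\nu$ settles $d=2$; this is a different and slightly more elementary route than the paper's $\nu\ge\tfrac12\tau$ argument, and both are valid. One small imprecision: you say the case $d=n$ ``is exactly'' Brualdi--Stein, but an $n$-regular $n\times n\times n$ simple $3$-partite hypergraph need not have the Latin-square property that every cross-pair lies in exactly one edge, so the $d=n$ case is strictly a \emph{generalization} of Brualdi--Stein, exactly as the paper asserts. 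Your assessment that a Woolbright-type alternating-forest analysis plausibly yields $\nu\ge n-O(\sqrt n)$ but cannot by itself reach $n-\lfloor n/d\rfloor$ is a fair reading of the state of the art; the honest conclusion that the full statement remains open matches the paper.

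\end{document}
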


For $d=2$ the conjecture is true, by the inequality $\nu \ge
\frac{1}{2}\tau$ proved in \cite{ryser3} (in a regular $n\times
n\times n$ $3$-partite hypergraph $\tau=n$, because
$\tau^*=\nu^*=n$). The Ryser-Brualdi-Stein conjecture is obtained by
taking $d=n$.  If true, the
conjecture is sharp for all $n$ and $d$. To see this, write
$n=kd+\ell$, where $\ell<d$, and take $k$ disjoint copies of a
$3$-partite $d$-regular $d \times d \times d$ hypergraph with
$\nu=d-1$, together with a disjoint  $\ell \times \ell \times
\ell$~~$d$-regular $3$-partite hypergraph. To see that the non-repetition of edges is essential, take the Fano plane with a vertex deleted
(containing $4$ edges: $(a_1,b_1,c_1),(a_1,b_2,c_2),(a_2,b_1,c_2),(a_2,b_2,c_1)$), and repeat every edge $d/2$ times for any $d$ even. Then  the degree of every vertex is $d$, and the matching number is $1$.

{\bf Acknowledgements:} The authors are indebted to Ian Wanless and Janos Barat for helpful discussions. Part of the research of the first author was done while visiting them at Monash University.

\end{document}